\numberwithin{equation}{section}
\theoremstyle{plain}
	\newtheorem{thm}[equation]{Theorem}
	\newtheorem{prop}[equation]{Proposition}
	\newtheorem{lem}[equation]{Lemma}
	\newtheorem{cor}[equation]{Corollary}
	\newtheorem{lem/defn}[equation]{Lemma/Definition}
\theoremstyle{definition}
	\newtheorem{defn}[equation]{Definition}
\theoremstyle{remark}
	\newtheorem{rem}[equation]{Remark}
\def\nc{\newcommand}
\def\on{\operatorname}
\def\co{\colon\thinspace}
\newcommand{\RomanNumeralCaps}[1]
    {\MakeUppercase{\romannumeral #1}}
\nc{\edit}[1]{\marginpar{\footnotesize{#1}}}
\newcommand{\lv}{\lvert}
\newcommand{\rv}{\rvert}
\nc{\C}{\mathcal{C}}
\nc{\Z}{\mathbb{Z}}
\nc{\z}{\mathbb{Z}}
\nc{\PP}{\mathbb{P}}
\nc{\R}{\mathbb{R}}
\nc{\f}{\mathbb{F}}
\nc{\fp}{\mathbb{F}_p}
\nc{\sph}{\mathbb{S}}
\nc{\hfp}{H\mathbb{F}_p}
\nc{\T}{\mathbb{T}}
\nc{\wdg}{\wedge}
\nc{\wdgfp}{\wedge_{H\mathbb{F}_p}}
\newcommand {\we} {\ensuremath{\tilde\rightarrow}}
\nc{\AAA}{\mathbb{A}}
\nc{\LL}{\mathbb{L}}
\nc{\OO}{\mathcal{O}}
\nc{\X}{\EuScript{X}}
\nc{\sZ}{\EuScript{Z}}
\nc{\id}{{\on{id}}}
\nc\Hom{{\on{Hom}}}
\nc\cone{{\on{cone}}}
\nc{\Rep}{{\on{Rep}}}
\nc\Ob{{\on{Ob}}}
\nc\Spec{{\on{Spec}}}
\nc\Mod{{\on{Mod}}}
\nc\coMod{{\on{coMod}}}
\nc\Perf{{\on{Perf}}}
\nc\End{{\on{End}}}
\nc{\into}{\hookrightarrow}
\nc{\tr}{\on{tr}}
\nc{\ev}{\on{ev}}
\nc{\im}{\on{im}}
\nc{\Mot}{\on{Mot}}
\nc{\pt}{\on{pt}}
\nc{\coker}{\on{coker}}
\nc{\rk}{\on{rank}}
\nc{\TOP}{\on{Top}_{\mathbb{C}}^{s}}
\nc{\gr}{\on{gr}}
\nc{\Catperf}{\text{Cat}^{\text{perf}}}
\nc{\Sym}{\on{Sym}}
\nc{\xra}{\xrightarrow}
\nc{\lra}{\xleftarrow}
\nc{\Bet}{\mathbf{Betti}_{X}}
\nc{\codim}{\on{codim}}
\nc{\Fred}{\on{Fred}}
\nc{\colim}{\on{colim}}
\nc{\KK}{{\bf K}}
\nc{\onto}{\twoheadrightarrow}
\nc{\A}{\mathbb{A}}
\nc{\Aff}{\on{Aff}}
\nc{\SH}{\on{SH}}
\nc{\QCoh}{\on{QCoh}}
\nc{\Alg}{\on{Alg}}
\nc{\Br}{\on{Br}}
\nc{\ta}{\widetilde{\a}}
\nc{\Shv}{\on{Shv}}
\nc{\GG}{\mathbb{G}}
\nc{\red}{\color{red}}
\nc{\blue}{\color{blue}}
\nc{\an}{\on{an}}
\nc{\D}{\on{D}}
\nc{\Pre}{\on{Pre}}
\nc{\assact}{\on{Ass}_{\on{act}}^{\otimes }}
\nc{\spact}{\on{Sp}_{\on{act}}^{\otimes }}
\nc{\spactprod}{{\on{Sp}^{\Z}_{\on{act}}}^{\otimes }}
\nc{\qc}{\on{qc}}
\nc{\op}{\on{op}}
\nc{\shEnd}{{\mathcal End}}
\nc{\Top}{\on{Top}}
\nc{\Map}{\on{Map}}
\nc{\Vect}{\on{Vect}}
\nc{\holim}{\on{holim}}
\DeclareMathOperator{\thh}{\ensuremath{\textup{THH}}}
\DeclareMathOperator{\tor}{\ensuremath{\textup{tor}}}
\DeclareMathOperator{\tp}{\ensuremath{\textup{TP}}}
\DeclareMathOperator{\tc}{\ensuremath{\textup{TC}}}
\DeclareMathOperator{\kth}{\ensuremath{\textup{K}}}
\DeclareMathOperator{\hh}{\ensuremath{\textup{HH}}}
\def\A{\mathcal{A}}
\def\a{\alpha}
\def\Perf{\on{Perf}}
\nc{\W}{\mathbb{W}}
\def\QCoh{\on{QCoh}}
\title{Algebraic $K$-theory of $\thh(\fp)$}
\author{Haldun Özgür Bayındır, Tasos Moulinos}
\begin{document}
\maketitle
%\maketitle
\begin{abstract}

In this work we study the $E_{\infty}$-ring $\thh(\fp)$ as a graded spectrum. Following an identification at the level of $E_2$-algebras with $\fp[\Omega S^3]$, the
group ring of the $E_1$-group $\Omega S^3$ over $\fp$, we show that the grading on $\thh(\fp)$ arises from a decomposition on the cyclic bar construction of the pointed monoid $\Omega S^3$. This allows us to use trace methods to compute the algebraic $K$-theory of $\thh(\fp)$. We also show that as an $E_2$ $H\fp$-ring, $\thh(\fp)$ is uniquely determined by its homotopy groups. These results hold in fact for $\thh(k)$, where $k$ is any perfect field of characteristic $p$. Along the way we expand on some of the methods used by Hesselholt-Madsen and later by Speirs to develop certain tools to study the THH of graded ring spectra  and the algebraic $K$-theory of  formal DGAs. 
\end{abstract}

\section{Introduction}
Trace methods form a central strategy in computing algebraic $K$-theory. There exist canonical \emph{trace maps} between the algebraic $K$-theory and the more accessible Hochschild invariants. In many cases of interest, these invariants faithfully reflect the behavior of algebraic $K$-theory itself. Thus, a refined understanding of such Hochschild  invariants along with these trace maps has made computations in algebraic $K$-theory considerably more feasible. 
This line of attack has been further streamlined in recent years, due in large part to a rich interplay of higher categorical structures and classical homotopy theoretic calculations. 

A central ingredient to the story, at least in the characteristic $p$ setting, is the computation due to B\"okstedt of the homotopy groups of the topological Hochschild homology of $\fp$ as a graded polynomial algebra  on
a generator in degree 2; this has come to be known as B\"okstedt periodicity. As the term may suggest, whenever $R$ is linear over $\fp$, this allows for the periodicity operator in Tate cohomology
$$
t \in \hat{H}^{2}( B \T, \pi_* \thh(R)))
$$
to survive to the $E^\infty$-page of the spectral sequence computing the Tate fixed point spectrum $\thh(R)^{t \T }$.  Working in the modern formulation of cyclotomic spectra due to Nikolaus-Scholze, this often allows for a streamlined computation of the topological cyclic homology  $\tc(R)$.

In this paper we turn this phenomenon of B\"okstedt periodicity on its head by using it to compute the topological cyclic homology of $\thh(k)$ itself, for $k$ any perfect field of characteristic $p$. Before stating our main result, we recall the following celebrated theorem of 
Dundas-Goodwillie-McCarthy:

\begin{thm}[Dundas-Goodwillie-McCarthy] \label{thm dundasgoodwilliemccarthy}
Let $A\to B$  be a map of connective $\sph$-algebras such that $\pi_0A \to \pi_0B$ is surjective and has nilpotent kernel. Then the square
\begin{equation*}
    \begin{tikzcd}
    K(A) \ar[r,"tr"] \ar[d]& TC(A)\ar[d]\\
    K(B) \ar[r,"tr"] & TC(B)
    \end{tikzcd}
\end{equation*}
is a homotopy pullback square.
\end{thm}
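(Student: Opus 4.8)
The plan is to reproduce the structure of the proof of Dundas--Goodwillie--McCarthy. By naturality of the cyclotomic trace, the assertion that the square is a homotopy pullback is equivalent to the statement that
\[
\tr\colon \kth(A,B)\longrightarrow\tc(A,B)
\]
is an equivalence, where $\kth(A,B):=\fib(\kth(A)\to\kth(B))$ and $\tc(A,B):=\fib(\tc(A)\to\tc(B))$ denote the relative theories. The first move is a d\'evissage reducing to the case of a split square-zero extension: using Goodwillie's connectivity estimates, $A\to B$ may be replaced by its Postnikov truncations $\tau_{\le m}A\to\tau_{\le m}B$; filtering along the Postnikov towers and along the powers of the nilpotent kernel $I\subseteq\pi_0A$ expresses each of these as a finite composite of (possibly non-split) square-zero extensions; and a further comparison reduces each such extension to a split one $R\ltimes M\to R$ with $M$ a connective $R$-bimodule. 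Since the same connectivity estimates also permit taking $M$ arbitrarily highly connected, it suffices to prove that $\tr$ is an equivalence on the relative theories $\kth(R\ltimes M,R)$, $\tc(R\ltimes M,R)$ for such $M$.

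One then regards $F(M):=\kth(R\ltimes M,R)$ and $G(M):=\tc(R\ltimes M,R)$ as functors of the bimodule $M$, both vanishing at $M=0$, with the trace a natural transformation $F\to G$. The heart of the proof is a first-order comparison. By Goodwillie's theorem $F$ is analytic, with explicit connectivity estimates comparing it to its Taylor tower; an analogous statement holds for $G$, obtained from the corresponding facts for $\thh$ and the construction of $\tc$ out of $\thh$ with its cyclotomic structure. One then shows that the trace induces an equivalence on first Goodwillie derivatives: the linearization of $F$ is stable $K$-theory, which by the Dundas--McCarthy theorem is $\thh(R;M)$, and one checks that the linearization of the trace identifies this, compatibly, with the linearization of $G$, the Tate- and Frobenius-type terms that distinguish $\tc$ from $\thh$ being of higher Goodwillie order.

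Granting this, the conclusion is formal: by analyticity, once $M$ is sufficiently highly connected both $F(M)$ and $G(M)$ agree, in any fixed range of degrees, with their common linearization, so $F\to G$ is an equivalence on such $M$, and by the d\'evissage $\tr\colon\kth(A,B)\to\tc(A,B)$ is an equivalence. I expect the real difficulty to lie entirely in the analytic input --- proving that relative $K$-theory and relative $\tc$ are analytic, with connectivity bounds strong enough and mutually compatible enough that an equivalence on first derivatives forces an equivalence of the functors in the connective range. This is Goodwillie's hard theorem on the calculus of $K$-theory together with its counterpart for $\tc$, and it, not the formal manipulations above, is the substance of the proof. (A more recent route, following Land--Tamme, instead proves directly that $\fib(\kth\to\tc)$ is a \emph{truncating} invariant, whence the assertion follows from the general fact that truncating invariants are insensitive to nilpotent extensions; this, too, rests on the same circle of ideas.)
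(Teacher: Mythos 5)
The paper does not prove this theorem; it is recalled as a black box (the ``celebrated theorem of Dundas--Goodwillie--McCarthy'') and used as an input to deduce Theorem \ref{thm algebraic k theory of thh k} from the computation of $\tc(\thh(k),k)$. So there is no proof in the paper to compare against, and I can only assess your sketch on its own merits.

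As an outline of the published argument your proposal is essentially accurate: the reduction of the pullback assertion to the claim that $\kth(A,B)\to\tc(A,B)$ is an equivalence, the d\'evissage via Postnikov and $I$-adic filtrations to split square-zero extensions $R\ltimes M\to R$ with $M$ highly connected, the passage to Goodwillie calculus in the variable $M$, the identification of the first derivatives with $\thh(R;M)$ on both sides, and the final appeal to analyticity are all genuine features of the Dundas--Goodwillie--McCarthy proof, and the parenthetical mention of the Land--Tamme truncating-invariant route is a correct alternative. The places where the sketch is thin are exactly the places you flag as ``the substance of the proof'': you do not say why the relative functors are analytic with compatible radii, nor do you justify the sentence ``the Tate- and Frobenius-type terms that distinguish $\tc$ from $\thh$ being of higher Goodwillie order,'' which is where the cyclotomic structure actually enters and is the content of Hesselholt's identification of the derivative of $\tc$ (equivalently, of $\tr$ being a $1$-equivalence on derivatives). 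A smaller but worth-noting point: the reduction is usually stated not to split square-zero extensions per se but to maps $A\to B$ that are sufficiently highly connected, the split square-zero case then being reached by a further (nontrivial) comparison; your phrasing compresses two separate reduction steps into one. None of this is a gap in the paper, since the paper is not attempting a proof; it is simply where your sketch would need to be filled in if it were.
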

With this result at our disposal, the main theorem in this paper takes on the following form. 

\begin{thm}\label{thm algebraic k theory of thh k}
For every perfect field of positive characteristic $k$, there are isomorphisms
\[\kth_{2r+1}(\thh(k),k) \cong \mathbb{W}_r(k) \]
where $\mathbb{W}_r(k)$ denotes the big Witt vectors of length $r$ in $k$. The   relative algebraic $K$-theory groups are trivial in even degrees. 
\end{thm}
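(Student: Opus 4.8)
The strategy is to apply trace methods via the Dundas–Goodwillie–McCarthy theorem (Theorem 1.3 above). The unit map $k \to \thh(k)$ has a retraction, and one would use the augmentation $\thh(k) \to k$ (collapsing the $E_2$-identification $k[\Omega S^3]$ onto $k[\ast] = k$, which kills everything in positive degree). Since $\pi_0 \thh(k) = k \to k$ is an isomorphism with nilpotent (indeed zero) kernel, DGM applies and gives a homotopy pullback square relating $K(\thh(k))$, $TC(\thh(k))$, $K(k)$, $TC(k)$. Hence the relative $K$-theory $K(\thh(k),k)$ — the fiber of $K(\thh(k)) \to K(k)$ — agrees with the relative $TC$, i.e. the fiber of $TC(\thh(k)) \to TC(k)$. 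So the whole problem reduces to computing $TC(\thh(k))$ relative to $TC(k)$.

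To compute $TC(\thh(k))$ one works in the Nikolaus–Scholze framework: $TC$ is assembled from $TC^{-} = \thh(\thh(k))^{h\T}$ and $TP = \thh(\thh(k))^{t\T}$ via the equalizer of $\varphi$ and $\can$. The key structural input — established earlier in the paper — is that the grading on $\thh(k)$ (equivalently the $E_2$-identification with $k[\Omega S^3]$) comes from a decomposition of the cyclic bar construction of the pointed monoid $\Omega S^3$. This means $\thh(\thh(k))$ and all its fixed-point constructions split as wedges indexed by a weight decomposition, and one can analyze each weight piece separately, exactly as in the Hesselholt–Madsen and Speirs computations that the abstract references. The relative $TC$ will only see the positive-weight summands. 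Within each summand one uses Bökstedt periodicity for $\thh(k)$: the periodicity class $t \in \hat H^2(B\T; \pi_\ast \thh(k))$ forces the Tate spectral sequence to degenerate appropriately, making $TP$ and $TC^{-}$ of each weight piece computable, and one reads off that $R$-many copies contribute in a range governed by the weight. Assembling the equalizer, one expects the relative $TC$ to have homotopy concentrated in odd degrees $2r+1$, with $\pi_{2r+1}$ built out of truncated Witt-type data; tracking the $\varphi$–$\can$ equalizer through the weight filtration should produce exactly the big Witt vectors $\W_r(k)$ of length $r$ (the passage from $p$-typical pieces, one for each weight, to big Witt vectors is the standard reindexing, cf. Hesselholt–Madsen). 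The vanishing in even degrees follows because each weight summand contributes a single odd-degree group in the relevant range and the connecting maps in the equalizer are surjective there.

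The main obstacle will be the careful bookkeeping of the equalizer $TC = \mathrm{eq}(\varphi, \can)$ across the weight decomposition: one must identify, weight by weight, the maps $\varphi$ and $\can$ on the Tate and homotopy-fixed-point spectral sequences of $\thh(\thh(k))$, and show that the Frobenius $\varphi$ interacts with the weight grading in the expected way (typically multiplying weight by $p$, which is what produces the Witt-vector structure). Controlling the multiplicative extensions in these spectral sequences — and ensuring that Bökstedt periodicity genuinely makes each weight piece as simple as in the ungraded $\thh(\fp)$ case, uniformly in the perfect field $k$ — is the technical heart. Once the per-weight $TC$ computation is in hand, reassembling into $\W_r(k)$ and confirming the even-degree vanishing should be formal. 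Finally, one notes that the argument is insensitive to replacing $\fp$ by a general perfect field $k$ of characteristic $p$ because Bökstedt periodicity and the pointed-monoid decomposition both persist (the $E_2$-identification with $k[\Omega S^3]$ holds over any such $k$), so the same computation goes through verbatim, with $k$ entering only through the coefficient ring appearing in the Witt vectors.
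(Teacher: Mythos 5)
Your proposal is correct and follows essentially the same route as the paper: reduce to relative $\tc$ via Dundas–Goodwillie–McCarthy applied to the augmentation $\thh(k)\to Hk$, split the cyclic bar construction of $\thh(k)\simeq k[\Omega S^3]$ by weight, use B\"okstedt periodicity and the fact that the Frobenius multiplies weight by $p$ to control the Tate and homotopy fixed point spectral sequences weight by weight, and assemble the $(\varphi-\can)$-equalizer into big Witt vectors via the standard Hesselholt–Madsen reindexing. The only cosmetic imprecision is the phrasing "forces the Tate spectral sequence to degenerate" — in fact for $p\nmid m$ the Tate construction of the $m$-th weight piece vanishes, which forces a specific \emph{nontrivial} differential rather than degeneration, but this is exactly the kind of bookkeeping you flag as the technical heart, and the paper carries it out as you anticipate.
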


Our starting point for this calculation is the observation from \cite{krause2019b} that $\thh(\fp)$ is equivalent as an $E_1$-algebra to $\fp [\Omega S^3]$, the algebra of chains on the free $E_1$-group  $ \Omega \Sigma  S^2$ on $S^2$, viewed as a pointed space. As topological Hochshild homology is symmetric monoidal, one then obtains an equivalence of $\T$-equivariant spectra 
$$
\thh(\thh(\fp)) \simeq \thh(\fp [\Omega S^3]) \simeq \thh(\fp) \wedge \thh(\sph[\Omega S^3]) 
$$

We improve upon this with the following result, which serves as a key input in the ensuing calculation:   

\begin{thm} \label{thm thhfp as an e2 algebra}
For every perfect field $k$ of positive characteristic, there is an equivalence of $E_2$ $Hk$-algebras
\[\thh(k) \simeq Hk \wdg \sph[\Omega S^3].\]
\end{thm}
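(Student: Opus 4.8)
The plan is to reduce the statement to an intrinsic-formality assertion for $E_2$-$Hk$-algebras and then to establish that assertion using a weight grading. By \cite{krause2019b} the underlying $E_1$-$Hk$-algebra of $\thh(k)$ is $Hk\wdg\sph[\Omega S^3]$ (one may first reduce to $k=\fp$ by base change along $\fp\to k$, using that $\thh(k)\simeq Hk\wdgfp\thh(\fp)$ for $k$ perfect), and B\"okstedt periodicity gives $\pi_*\thh(k)\cong k[x]$ with $|x|=2$. Since $Hk$ is an $E_\infty$-ring, $\thh(k)$ is an $E_\infty$-, hence $E_2$-, $Hk$-algebra with this polynomial homotopy. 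So it is enough to show that, up to equivalence, $\thh(k)$ is the \emph{only} $E_2$-$Hk$-algebra whose underlying $E_1$-ring is $Hk\wdg\sph[\Omega S^3]$; equivalently, that the graded ring $k[x]$, $|x|=2$, is \emph{intrinsically formal as an $E_2$-$Hk$-algebra}. Given this, ``$Hk\wdg\sph[\Omega S^3]$ as an $E_2$-ring'' is unambiguous and must coincide with $\thh(k)$.

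I would prove this formality using the weight grading. The James model $\Omega S^3\simeq\Omega\Sigma S^2$, with its word-length filtration $J_1\subset J_2\subset\cdots$ and subquotients $J_w/J_{w-1}\simeq(S^2)^{\wdg w}\simeq S^{2w}$, presents $Hk\wdg\sph[\Omega S^3]$ as a $\mathbb{Z}_{\geq 0}$-graded $E_1$-$Hk$-algebra whose weight-$w$ summand is $\Sigma^{2w}Hk$, concentrated in the single homological degree $2w$ --- call such a grading \emph{pure}. The first step is to upgrade this grading, with its purity, to the $E_2$-structure on $\thh(k)$: this is precisely where the tools for THH of graded ring spectra developed here enter, identifying the natural grading on $\thh(k)$ (arising from B\"okstedt periodicity and the cyclic bar construction) with the word-length grading and checking that the $E_2$-multiplication is additive in weights. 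The decisive step is then a purity-implies-formality statement: a graded $E_2$-$Hk$-algebra $A=\bigoplus_{w\geq 0}A^{(w)}$ with each $A^{(w)}$ concentrated in homological degree $2w$ is equivalent, as a graded $E_2$-$Hk$-algebra, to $\pi_*A$ with its induced structure. One proves this by induction up the weight filtration: the obstruction to extending a partial equivalence, and the indeterminacy in the extension, lie in $E_2$-André--Quillen (equivalently $E_2$-Hochschild) cohomology groups of $\pi_*A$ with coefficients in shifts of the pure pieces $A^{(w)}$, and purity forces each such group into a (weight, homological degree) bidegree in which it vanishes by a comparison of the two degrees. Hence $\thh(k)$ is formal as a graded $E_2$-$Hk$-algebra, and forgetting the grading gives $\thh(k)\simeq Hk\wdg\sph[\Omega S^3]$ as $E_2$-$Hk$-algebras.

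The main obstacle is exactly this purity-implies-$E_2$-formality input: it demands enough control of the bigraded $E_2$-André--Quillen cohomology of the polynomial $E_2$-$Hk$-algebra $k[x]$, $|x|=2$, to see that the obstruction groups vanish. This is delicate because $k[x]$ is far from being the free $E_2$-$Hk$-algebra on $x$: the free one carries a Browder bracket class $[x,x]$ of degree $5$ and further $E_2$-Dyer--Lashof operations on $x$, so $k[x]$ is obtained from it by attaching $E_2$-cells in odd degrees, and one must verify that the attaching data is rigid. The leverage is that the single generator sits in the pure bidegree (weight $1$, degree $2$) and that $E_2$-operations shift weight and homological degree in a fixed compatible way, so that the relevant cohomology lands in bidegrees of ``odd versus even'' type and must vanish. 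That the $E_2$-refinement really requires this argument, rather than coming for free, is visible already from the Hopkins--Mahowald description of $Hk$ as the Thom spectrum of an $E_2$-map out of $\Omega^2 S^3$: combined with the Blumberg--Cohen--Schlichtkrull formula for THH of Thom spectra it reproves the equivalence $\thh(k)\simeq Hk\wdg\sph[\Omega S^3]$ but only at the $E_1$-level, since $\Omega^2 S^3$ is only $E_2$ and not $E_3$.
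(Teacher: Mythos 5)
Your reduction to an intrinsic-formality statement for $E_2$ $Hk$-algebras with homotopy ring $k[x_2]$ is exactly the paper's reduction (Theorem \ref{thm e2 dgas with polynomial homology}), but your proposed proof of that statement has a genuine gap, in two places. First, the grading you need on $\thh(k)$ is not available at this stage: the graded-THH machinery of Section \ref{gradedthh} produces a grading on $\thh(A)$ only when $A$ itself is graded, and the weight decomposition used in the paper lives on $\thh(\thh(k))\simeq \thh(k)\wdg\thh(\sph[\Omega S^3])$ and is constructed only \emph{after} Theorem \ref{thm thhfp as an e2 algebra} is in hand. Promoting the word-length grading of $Hk\wdg\sph[\Omega S^3]$ to an $E_2$-compatible weight splitting of $\thh(k)$ amounts to a multiplicative splitting of the (double-speed Postnikov) filtration of $\thh(k)$, which is essentially the formality statement you are trying to prove; as written, your first step is circular. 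Moreover, even granting such a splitting, a graded ``purity implies formality'' statement only compares $E_2$-algebras \emph{equipped with} pure gradings, so by itself it does not give the ungraded uniqueness needed to identify an arbitrary $E_2$-model with the formal one --- which is why the paper proves Theorem \ref{thm e2 dgas with polynomial homology} with no grading hypothesis at all.

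Second, the decisive vanishing claim (``purity forces the bigraded $E_2$-Andr\'e--Quillen obstruction groups to vanish by comparing the two degrees'') is asserted rather than proved, and the paper's own computation shows it is at best delicate: Lemma \ref{lem aq cohomology group calculation} gives $H^{me+1}_{\mathcal{C}_2}(k[x_m]/(x_m^{e}),Hk)\cong k$, generated by the class dual to $\sigma z$ coming from the divided-power/transpotence class $\varphi^e x_m$, which carries weight exactly $e$ when $x_m$ has weight $1$ --- the same weight as the homotopy class $x_m^e$ being attached. So the group housing the $k$-invariants does \emph{not} vanish for weight-versus-degree reasons, and the groups receiving the lifting obstruction are attached to the unknown algebra $Y$, which you cannot compute directly; a genuine bigraded analogue of Lemma \ref{lem aq cohomology group calculation}, with an actual vanishing statement in the relevant bidegrees, would be needed and is comparable in difficulty to what the paper does. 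The paper instead circumvents the non-vanishing by a different device: the $k$-invariant of $Y$'s own Postnikov extension is a nonzero element of the one-dimensional group $H^{me+1}_{\mathcal{C}_2}(Y[m(e-1)],Hk)\cong k$ which pulls back to zero in $H^{me+1}_{\mathcal{C}_2}(Y[me],Hk)$, and $k$-linearity then forces the comparison map \eqref{eq map of aq cohomology groups} to be zero, killing the obstruction. To salvage your route you would need (i) a non-circular construction of the multiplicative weight splitting (for instance a filtered-formality statement for the Postnikov filtration with pure associated graded), and (ii) the bigraded obstruction-group computation; neither is supplied in your sketch.
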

Indeed, we prove a stronger statement. We show that there is a unique $E_2$ $k$-DGA with homology ring $k[x]$ for every positive and even $\lv x \rv$, see Theorem \ref{thm e2 dgas with polynomial homology}. We prove this result using the $E_n$ Postnikov extension theory of Basterra and Mandell \cite{basterra2013BP}. 
\begin{cor}\label{cor e1 equivalence of k theories}
There is an equivalence of $E_1$-algebras
\[\kth(\thh(k)) \simeq \kth(Hk \wdg \sph[\Omega S^3]).\]
\end{cor}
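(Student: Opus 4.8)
The plan is to obtain this as a formal consequence of Theorem~\ref{thm thhfp as an e2 algebra} together with the multiplicativity of algebraic $K$-theory. First recall the mechanism by which $K$-theory lowers the operadic degree of a ring: for an $E_{n+1}$-ring spectrum $R$ with $n \ge 1$, the $\infty$-category $\Mod_R$ is $E_n$-monoidal, and since the unit $R$ is compact and perfect modules are closed under the relative tensor product, the full subcategory $\Perf(R)$ inherits the structure of an $E_n$-algebra object in $\Catperf$, the $\infty$-category of small idempotent-complete stable $\infty$-categories equipped with the Lurie tensor product; see Lurie's \emph{Higher Algebra}. On the other hand, (nonconnective) algebraic $K$-theory, viewed as a functor $\kth \co \Catperf \to \on{Sp}$, is lax symmetric monoidal --- this is the content of Barwick's work on multiplicative structures in algebraic $K$-theory, and also of Blumberg--Gepner--Tabuada. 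A lax symmetric monoidal functor carries $E_n$-algebra objects to $E_n$-algebra objects and equivalences between them to equivalences, so the assignment $R \mapsto \kth(\Perf(R))$ takes $E_{n+1}$-rings to $E_n$-ring spectra, naturally in $R$.

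Now specialize to $n = 1$. Theorem~\ref{thm thhfp as an e2 algebra} supplies an equivalence of $E_2$ $Hk$-algebras, hence in particular of $E_2$-ring spectra, $\thh(k) \simeq Hk \wdg \sph[\Omega S^3]$. Passing to perfect modules yields an equivalence $\Perf(\thh(k)) \simeq \Perf(Hk \wdg \sph[\Omega S^3])$ of $E_1$-algebras in $\Catperf$, and applying $\kth$ produces the desired equivalence of $E_1$-ring spectra $\kth(\thh(k)) \simeq \kth(Hk \wdg \sph[\Omega S^3])$.

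Given these inputs the argument is essentially formal, so there is no serious obstacle; the one point deserving care is bookkeeping about which multiplicative structure on $K$-theory is in play. Concretely, one should confirm that for an ordinary $E_\infty$-ring $R$ the $E_1$-structure on $\kth(R)$ produced here recovers the classical ring structure on algebraic $K$-theory, and that for connective $R$ the spectrum $\kth(\Perf(R))$ agrees with the $K$-theory of finitely generated projective $R$-modules; both identifications are already built into the cited multiplicativity results. In particular, since $\thh(k)$ and $Hk \wdg \sph[\Omega S^3]$ are connective, $\kth$ may be read throughout as connective algebraic $K$-theory without any change to the argument.
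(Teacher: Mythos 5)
Your proposal is correct and follows essentially the same (implicit) approach as the paper, which states the corollary as an immediate formal consequence of Theorem~\ref{thm thhfp as an e2 algebra} without spelling out the mechanism. Your write-up just makes explicit the standard chain (an $E_2$-equivalence of rings gives an $E_1$-monoidal equivalence of perfect modules, and $K$-theory is lax symmetric monoidal on $\Catperf$) that the authors take for granted.
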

Among other things, the splitting in Theorem \ref{thm thhfp as an e2 algebra} provides the multiplicative structure on the (second) iterated topological Hochschild homology groups of $k$. Namely, we obtain a ring isomorphism
\begin{equation} \label{eq thhthhfp ring structure}
    \begin{split}
        \thh_*(\thh(k)) \cong& \pi_*(\thh(k) \wdg \thh(\sph[\Omega S^3]))
        \\
        \cong& \thh_*(k)\otimes \hh^k_*(Hk \wdg \sph[\Omega S^3]) \\
        \cong & k[x_2] \otimes k[y_2]\otimes \Lambda(z_3) \\
    \end{split}
\end{equation}
where subscripts denote the internal degrees and the last factor denotes the exterior algebra over $k$ on a single generator. The third isomorphism is described in \eqref{eq equation above 2}. 
Note the isomorphism in \eqref{eq thhthhfp ring structure} for odd $p$ and $k=\fp$ is due to Veen  \cite{veen2018detecting}. Indeed, Veen calculates the $n$-iterated topological Hochschild homology groups of $\fp$ for $n \leq p$ when $p\geq 5$ and for $n \leq 2$ when $p=3$. His methods are different than ours and Veen does not cover the $p=2$ case.

With this at our disposal, we proceed as in \cite{hesselholt1997polytopesandtruncatedpolynomial}. In \cite{hesselholt1997polytopesandtruncatedpolynomial}, Hesselholt and Madsen compute the algebraic $K$-theory of truncated polynomial algebras by considering them as monoid rings. This provides a decomposition of the cyclic bar construction as a cyclotomic spectrum. Later, these methods are used by Speirs to reproduce these computations using the Nikolaus-Scholze approach to topological cyclic homology \cite{speirs2020truncatedpolynomial}. 

In this work, we generalize this approach to formal DGAs whose homology is a graded monoid ring. In particular, we obtain a decomposition of the cyclic bar construction on $\thh(k)$, which is moreover a specific instance of a functorial decomposition on the $\thh$ of a graded ring spectrum. This
decomposes the Tate and homotopy fixed points spectral sequences, and ultimately, the computation of $\tc(\thh(k),k)$
into simpler computationally accessible pieces. Equipped with both the Dundas-Goodwillie-McCarthy theorem, and Quillen's computation of the algebraic $K$-theory, this results in a complete understanding of the homotopy groups of $\kth(\thh(k))$ for every finite field $k$.

\begin{rem}
It is possible to compute $K_*(\thh(k),k)$ without using Theorem \ref{thm thhfp as an e2 algebra}. Since $\thh(k)$ is equivalent to $k[\Omega S^3]$ as an $E_1$-algebra these $K$-theory groups are given by $K_*(k[\Omega S^3],k)$ and our arguments  provide a computation of these groups. However, with the identification in Theorem \ref{thm e2 dgas with polynomial homology}, the computation of $K_*(\thh(k),k)$ flows in a more direct and natural way. Furthermore, Theorem \ref{thm e2 dgas with polynomial homology} is of interest on its own. For instance, it results in Corollary \ref{cor e1 equivalence of k theories} and  provides the multiplicative structure on the second iterated topological Hochschild homology groups of $k$.
\end{rem}
\subsection{Perspectives}
We now state some applications of these calculations. Note that one has  the following sequence of maps
\[\kth(\kth(\fp)) \to \kth(\tc(\fp)) \to \kth(\thh(\fp)),\]
obtained by applying the $K$-theory functor to the cyclotomic trace and Dennis  maps. Theorem \ref{thm algebraic k theory of thh k} identifies the right hand side. We also identify the middle term by showing that $\tc(\fp)$ as a $\Z_p$-DGA is quasi-isomorphic to the cochain algebra $C^*(S^1, \z_p)$ of the $1$-sphere with coefficients in $\z_p$, see Proposition \ref{prop unique dga with exterior homology}. It follows by the theorem of the heart  \cite[4.8]{antieau2019ktheoreticobstructionstobddtstructures} that these groups are given by $\kth(\z_p)$. Indeed, the sequence above is given by 
\[\kth(\kth(\fp))\to \kth(\z_p) \to \kth(\thh(\fp)).\]
where the second map is induced by a map $H\z_p \to \thh(\fp)$ of $E_\infty$-ring spectra  \cite[\RomanNumeralCaps{4}.4.10]{nikolausscholze2018topologicalcyclic} given by the composite of the connective cover map of $\tc(\fp)$ and the map $\tc(\fp) \to \thh(\fp)$. 
\begin{rem}
It is also important to note that the map $\kth(\Z_p) \to \kth(\thh(\fp))$ as a map of  $E_1$-ring spectra, may or may not be induced by the canonical map
\[H\Z_p\to H\fp \to \thh(\fp).\]
For instance, it follows from  \cite[\RomanNumeralCaps{4}.4.8]{nikolausscholze2018topologicalcyclic} that as a $\T$-equivariant map of $E_\infty$-ring spectra, the map $H\z_p\to \thh(\fp)$ obtained from the connective cover of $\tc(\fp)$ does not factor through $H\fp$. 
\end{rem}

It is interesting to view these maps through the lens of  chromatic redshift. Indeed, the expected behaviour is that the iterated $K$-theory spectrum $\kth(\kth(\fp))$ exhibits $v_2$-periodic phenomena. This is made somewhat more precise in \cite{angelini2018detecting} where it is shown, for $p \geq 5$ is a fixed prime and $q$ a prime power (coprime to $p$) that topologically generates $\Z_p^{\times}$,  that the $\beta$-family in the homotopy groups of spheres is detected by $\kth(\kth(\mathbb{F}_q)_{p})$. This means that under the unit map 
$$
\sph \to \kth(\kth(\mathbb{F}_q)_{p}),
$$
there are classes $\beta_k \in \pi_* \sph$ which map to nonzero elements of $\pi_* \kth (\kth (\mathbb{F}_q)_p)$. This family of elements in the stable homotopy groups of spheres are significant in that they arise from $v_2$ self maps, which is a chromatic height $2$ phenomenon.

In addition, $\thh(\fp)$,  being an $H \Z$-algebra is $K(1)$-acyclic, hence by Mitchell's theorem its $K$-theory is $K(2)$-acyclic. Thus, the map 
$$
\kth(\kth(\fp)) \to \kth(\thh(\fp)) 
$$
annihilates these elements in homotopy. Moreover, note that by recent work in \cite{land2020vanishing}, $K(1)$-localized $\kth$-theory is truncating on $H \Z$-algebras. In particular
$$
L_{K(1)} \kth(\thh(\fp)) \simeq L_{K(1)} \kth(\fp)
$$
so one can say that $K(1)$-localization is insensitive to the difference between the $K$-theory of $\fp$ and of $\thh(\fp)$. In other words, there is an equivalence 
$$
L_{K(1)}\kth(\thh(\fp)); \fp) \simeq L_{K(1)}\kth(\tc(\fp)); \fp)  \simeq 0
$$
in the $K(1)$-local category of spectra. 

Another point of view on Theorem \ref{thm algebraic k theory of thh k} is through what is sometimes called the fundamental theorem of algebraic $K$-theory. This theorem states that for a Noetherian regular ring $R$, the $K$-theory groups of $R$ and $R[x]$ agree \cite[\RomanNumeralCaps{5}.3.3,\RomanNumeralCaps{5}.6.2]{weibel2013k}. Recall that $\thh(k)$ is the formal DGA with homology $k[x_2]$. Therefore, Theorem \ref{thm algebraic k theory of thh k} shows that the fundamental theorem of algebraic $K$-theory does not generalize to formal DGAs. 

\subsection{Future directions}
A potentially fruitful next step would be to generalize these results to the setting where the perfect field $k$ is replaced by a perfectoid ring $R$. Starting off with the fact that 
$$
\pi_* \thh(R) \cong R[v]
$$
for such rings, one would expect the calculations involving the Tate and homotopy fixed point spectral sequences to proceed in a similar fashion.    Alternatively, one way to bypass this issue is to use an alternate argument to compute the Tate and homotopy fixed points, using explicit models for the summands that appear in the decomposition for $\thh(R[ \Omega S^3])$, as they appear in \cite[Section 3.3]{speirs2019ktheorycoordinate} and \cite{riggenbach2020algebraic}, and are in fact exploited in the perfectoid setting in the latter paper. 

Our methods, together with the constructions in \cite{bayindir2019extension}, provide a general framework to study the algebraic $K$-theory of  formal DGAs whose homology is a connected graded monoid. For such DGAs, this results in a decomposition of the cyclic bar construction. We plan to use these ideas to generalize the  calculation of the $K$-theory of truncated polynomial algebras, due to Hesselholt and Madsen, to formal DGAs with truncated polynomial algebra homology. 

Another potential application of the methods used in this paper lies in the realm of $A$-theory calculations. Using the succinct description of $\tc(\sph[\Omega S^3])$ from \cite{bokstedt1993cyclotomictraceandalgebraicktheoryofspaces} (see also \cite{nikolausscholze2018topologicalcyclic}) as a homotopy Cartesian square with terms $\thh(\sph[\Omega S^3])$ and $\Sigma \thh(\sph[\Omega S^3])_{h \T}$ together with our decomposition of the relevant cyclic bar construction, one can compute explicitly the integral homology of $A(S^3)= K( \sph[\Omega S^3])$. We expect to return to these ideas and their extensions in future work. 
\\

\textbf{Outline:} Section \ref{sec e2 identification of thhfp} is devoted to the proof of Theorem \ref{thm thhfp as an e2 algebra} and may be viewed as independent from the rest of this work. Section 3 consists of a brief overview of cyclotomic spectra. Next in section 4, we introduce a graded variant of $\thh$ and use it to obtain functorial decompositions of $\thh(A)$ when $A$ is equipped with a grading. In section 5 we study in greater depth the cyclotomic structure of the spectrum $\thh(\fp[\Omega S^3])$. The pieces are all tied together in section 6 where we compute  $\tp(\thh(\fp)) , \tc^-(\thh(\fp))$ and finally  $\tc(\thh(\fp))$. Last but not least, we include a short argument computing $\kth(\tc(\fp))$ in Section \ref{sec algebraic k theory of tc fp}. 

\textbf{Conventions:} We use $\infty$-categorical language in some of our constructions. However we resort to the  occasional point-set level argument so we have added a short appendix with relevant rectification results.  

\textbf{Acknowledgments}
This work owes an obvious debt to the work of Speirs in \cite{speirs2019ktheorycoordinate, speirs2020truncatedpolynomial}. Furthermore, we are grateful to Martin Speirs for answering our questions on his work. The first author acknowledges support from the project ANR-16-CE40-0003 ChroK. The second author was supported by grant NEDAG ERC-2016-ADG-741501 during the writing of this work. 

\section{A new description of THH$(\fp)$ as an $E_2$-algebra }\label{sec e2 identification of thhfp}
 In this section, we prove Theorem \ref{thm thhfp as an e2 algebra}. In other words, we show that there is an equivalence of $E_2$ $Hk$-algebras
 \[\thh(k) \simeq Hk \wdg \sph[\Omega S^3]\]
 whenever $k$ is a perfect field of positive characteristic. Note that $S^3$ is a group object in spaces and therefore is an $E_1$-algebra. This makes $\Omega S^3$ an $E_2$-algebra in spaces. Together with the fact that the infinite suspension functor is a symmetric monoidal functor, this provides the $E_2$-algebra structure on the right hand side. 
 
 We prove the following stronger statement. 
 \begin{thm} \label{thm e2 dgas with polynomial homology}
 There is a unique connective $E_2$ $Hk$-algebra with homotopy ring $k[x_m]$ for every field $k$ where $\lv x_m\rv = m$ is even.
 \end{thm}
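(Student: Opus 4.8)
The plan is to build the $E_2$ $Hk$-algebra with homotopy ring $k[x_m]$ by an obstruction-theoretic Postnikov tower argument, using the $E_n$ Postnikov extension theory of Basterra–Mandell. First I would establish existence: the free $E_2$ $Hk$-algebra $A_0 = Hk\{x_m\}$ on a generator in degree $m$ has a canonical map to any target, and I would instead work with the Postnikov tower. Concretely, set $B = Hk[x_m]$ to be the square-zero-like object built degree by degree; more honestly, I would argue that the homotopy ring $k[x_m]$ with $m$ even has trivial higher operations forced by sparsity. The key observation is that $k[x_m]$ is concentrated in degrees divisible by $m$, so between consecutive potentially-nonzero homotopy groups there is a gap of size $m \geq 2$; this sparsity will be what kills all the relevant obstruction and difference groups.

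The main technical engine is the following: given an $E_2$ $Hk$-algebra $A$ with $\pi_* A = k[x_m]/(x_m^{n+1})$ (the $n$-th stage), the possible $E_2$-algebra extensions to an object with $\pi_*$ equal to $k[x_m]/(x_m^{n+2})$ are controlled by topological André–Quillen cohomology groups of the form $\mathrm{TAQ}^{s}_{E_2}(A/Hk; \Sigma^{t} H\pi_{(n+1)m}A)$ for appropriate $(s,t)$ — the obstruction to existence lives in one such group and the set of extensions, when nonempty, is a torsor over an adjacent one. I would compute these $\mathrm{TAQ}$ groups using the fact that $A$ is, inductively, a trivial square-zero extension built from $k$ in degrees $0, m, 2m, \dots, nm$, so its cotangent complex is a wedge of shifted copies of $Hk$ indexed by a polynomial basis, and $\mathrm{TAQ}$ of a free/polynomial $E_2$-algebra is computable via the bar construction / the $E_2$-operad's homology (e.g. via the results of Basterra–Mandell on $\mathrm{TAQ}$ of $Hk$-algebras). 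The sparsity of $k[x_m]$ then forces the relevant groups to vanish for degree reasons, giving both existence and uniqueness of the extension at each stage; assembling the tower and passing to the limit gives the unique $E_2$ $Hk$-algebra with $\pi_* = k[x_m]$.

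The hard part will be the $\mathrm{TAQ}$ computation and the precise bookkeeping of which bidegrees $(s,t)$ the obstructions occupy — in particular verifying that the relevant $E_2$-(co)homology operations (Dyer–Lashof-type operations and their duals) do not conspire to produce a nonzero class in exactly the bidegree where an obstruction could live. Since $m$ is even, there is no $E_1$-level failure (no issue with $x_m^2$), and the first place an $E_2$-operation could interfere is in degree roughly $2m + (\text{something})$; I expect the even-degree hypothesis plus the $\mathbb{Z}$-linearity (everything is an $Hk$-module) to rule this out, but this is the step requiring genuine care rather than formal nonsense. Once the vanishing is in place, the inductive extension-and-uniqueness argument is formal, and Theorem \ref{thm thhfp as an e2 algebra} follows by applying the uniqueness statement to $A = \thh(k)$, whose homotopy ring is $k[x_2]$ by Bökstedt periodicity, and to $A = Hk \wedge \sph[\Omega S^3]$, whose homotopy ring is likewise $k[x_2]$ since $H_*(\Omega S^3; k) = k[x_2]$.
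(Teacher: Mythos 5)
Your high-level framework — inducting up the Postnikov tower in $E_2$ $Hk$-algebras and controlling the extension at each stage via Basterra--Mandell's Andr\'e--Quillen cohomology — is exactly the right one and matches the paper. However, the load-bearing claim in your proposal, namely that ``the sparsity of $k[x_m]$ forces the relevant groups to vanish for degree reasons, giving both existence and uniqueness,'' is false, and not fixable by careful bookkeeping. The paper's Lemma \ref{lem aq cohomology group calculation} computes the relevant group $H^{me+1}_{\mathcal{C}_2}(Y[m(e-1)], Hk)$ and finds it is \emph{isomorphic to $k$}, not zero. In fact, vanishing of this group would be \emph{fatal} to the theorem rather than helpful: the set of Postnikov extensions of $Y[m(e-1)]$ by $\pi_{me} = k$ is classified by $k$-invariants in this group, the $k$-invariant $0$ always gives the square-zero extension, and the square-zero extension has homotopy ring $k[x_m]/(x_m^e)\oplus \Sigma^{me}k$, \emph{not} $k[x_m]/(x_m^{e+1})$. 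So if the group vanished, the only extension at each stage would be square-zero and you would never build $k[x_m]$ at all — you would get stuck with $x_m^2=0$ after the second stage. The nonvanishing of the group is thus essential to the statement you are trying to prove.

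The actual argument in the paper is correspondingly more delicate than ``formal nonsense after vanishing.'' It compares an arbitrary $Y$ to the formal model $X$, identifies the obstruction to lifting $Y[m(e-1)]\simeq X[m(e-1)]$ to a compatible equivalence $Y[me]\to X[me]$ as the pullback of the $k$-invariant of $X[me]\to X[m(e-1)]$ along the restriction map $H^{me+1}_{\mathcal{C}_2}(Y[m(e-1)],Hk) \to H^{me+1}_{\mathcal{C}_2}(Y[me],Hk)$, and shows this restriction map is \emph{zero} by a two-step argument: first, the $k$-invariant $x$ of the extension $Y[me]\to Y[m(e-1)]$ is a \emph{nonzero} element of $k$ (since a zero $k$-invariant would make $\pi_*Y[me]$ a square-zero extension, contradicting $\pi_*Y[me]\cong k[x_m]/(x_m^{e+1})$); second, the defining pullback square for $Y[me]$ exhibits the composite $Y[me]\to Y[m(e-1)]\xrightarrow{x} Y[m(e-1)]\vee\Sigma^{me+1}Hk$ as the trivial derivation, so $x$ restricts to $0$; since $x\ne0$ generates $k$ and the restriction map is $k$-linear, the whole map is zero. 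This interplay between a nonzero group, a nonzero $k$-invariant, and a vanishing restriction map is the heart of the proof and is precisely what your sparsity heuristic misses. Separately, the even-degree hypothesis on $m$ enters the paper not to kill groups by degree reasons but to ensure a class $y$ in the bar spectral sequence satisfies $y^2=0$, which is needed to run the $\textup{Tor}$ computation in Lemma \ref{lem aq cohomology group calculation} — a point your sketch also does not locate correctly.
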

 
To prove this result, we use the theory of $k$-invariants for Postnikov extensions of $E_n$-algebras developed by Basterra and Mandell \cite{basterra2013BP}. We start with the following lemma.
 
 \begin{lem}\label{lem unique exterior dga}
 There is a unique $E_n$ $Hk$-algebra with homotopy ring $\Lambda[x_m]$ (exterior algebra over $k$ with a single generator) for every even $m>0$ and for every $n\geq 0$ including $n=\infty$.
 \end{lem}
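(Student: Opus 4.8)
The plan is to run the $E_n$ Postnikov obstruction theory of Basterra--Mandell \cite{basterra2013BP}. The key observation is that $\Lambda[x_m]$ has homotopy concentrated in only two degrees, $0$ and $m$, so the Postnikov tower of any candidate algebra $A$ has a single nontrivial layer, and that layer sits over the \emph{initial} $E_n$-$Hk$-algebra, whose cotangent complex vanishes. This will simultaneously give existence and uniqueness.

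First I would record the shape of the Postnikov tower. If $A$ is an $E_n$-$Hk$-algebra with $\pi_*A \cong \Lambda[x_m]$, then $\pi_*A$ is concentrated in degrees $0$ and $m$; in particular $A$ is connective and $\tau_{\leq m}A \simeq A$. Moreover $\tau_{\leq m-1}A$ is a connective $E_n$-$Hk$-algebra with $\pi_*(\tau_{\leq m-1}A)=k$ concentrated in degree $0$, so the unit map $Hk \to \tau_{\leq m-1}A$ is an equivalence of $E_n$-$Hk$-algebras (here I use $m>0$, indeed $m\ge 2$). Hence the only nontrivial layer of the Postnikov tower of $A$ is a square-zero extension
\[
A \;=\; \tau_{\leq m}A \;\longrightarrow\; \tau_{\leq m-1}A \;\simeq\; Hk,
\]
whose fiber is $\Sigma^m Hk$ (since $\pi_m A$ is a one-dimensional $k$-vector space).

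Next I would invoke the classification of square-zero extensions from \cite{basterra2013BP}: such an extension is classified by a $k$-invariant in an $E_n$-topological Andr\'e--Quillen cohomology group of the form $\on{TAQ}^{m+1}_{E_n}(Hk/Hk;\,Hk)$, i.e.\ by a homotopy class of $E_n$-$Hk$-algebra derivations of $Hk$ valued in $\Sigma^{m+1}Hk$. Since $Hk$ is the initial $E_n$-$Hk$-algebra, the relevant cotangent complex $L^{E_n}_{Hk/Hk}$ is $0$, so this group vanishes for every $n$, including the extreme cases $n=0$ and $n=\infty$. Therefore the $k$-invariant is trivial and $A$ must be the trivial square-zero extension $Hk \oplus \Sigma^m Hk$; this proves uniqueness. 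For existence, $Hk \oplus \Sigma^m Hk$ is an $E_\infty$-$Hk$-algebra, hence an $E_n$-$Hk$-algebra for every $0\le n\le \infty$, and its homotopy ring is $k$ in degree $0$, a copy of $k$ spanned by a class $x_m$ in degree $m$, and all products of positive-degree classes zero; that is precisely $\Lambda[x_m]=k[x_m]/(x_m^2)$. (The hypothesis that $m$ is even is what ensures the relation $x_m^2=0$ is consistent with graded-commutativity, so that nothing further is forced; for the trivial square-zero extension this holds in any case.)

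The main thing to get right is the bookkeeping inside the Basterra--Mandell machinery: identifying the correct $\on{TAQ}$-group and its internal degree, and confirming that the ``Postnikov tower built from square-zero extensions'' formalism is available uniformly across the stated range of $n$. Once that is in place the vanishing step is immediate, precisely because it is the Andr\'e--Quillen cohomology of the initial object. I would note that this lemma is the easy base case of Theorem \ref{thm e2 dgas with polynomial homology}: there the Postnikov tower has infinitely many layers and the truncations $\tau_{\leq m-1}A$ are no longer initial, so the corresponding $\on{TAQ}$-groups genuinely have to be computed.
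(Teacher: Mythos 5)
Your argument is essentially the paper's: the paper also classifies the single nontrivial Postnikov extension $A \to \tau_{\leq m-1}A \simeq Hk$ by a $k$-invariant in the Basterra--Mandell Andr\'e--Quillen cohomology group $H^{m+1}_{\mathcal{C}_n}(Hk,Hk)$, which vanishes because the augmentation ideal (fiber of the augmentation) of $Hk$ over itself is trivial, forcing the extension to be the trivial square-zero one. The only cosmetic difference is at $n=0$, where the paper does not run the obstruction theory but simply notes that $k$-chain complexes are determined by their homology; otherwise your bookkeeping matches the paper's proof.
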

 
 \begin{proof}
 For $n=0$, the result follows by the fact that $k$-chain complexes are determined by their homology. For $n>0$, we show that there is a unique Postnikov extension 
 \[X \to Hk\]
where $X$ is an $E_n$ $Hk$-algebra with homotopy ring  $\Lambda[x_m]$.
The $k$-invariants of such extensions are classified by the Andr{\'e}-Quillen Cohomology groups 
\[H^{m+1}_{\mathcal{C}_n}(Hk,Hk)\] 
of $E_n$ $Hk$-algebras \cite[4.2]{basterra2013BP}. Since the input in this cohomology theory is actually our base ring, these groups are trivial. Indeed, to calculate these cohomology groups, one starts with the fiber of the augmentation map of $Hk$. This is the identity map of $Hk$ and therefore this fiber is trivial. It follows by Definition 2.6 of \cite{basterra2013BP} that these cohomology groups are also trivial. 
 \end{proof}

Let $X$ denote the $E_\infty$ $Hk$-algebra corresponding to the formal $E_\infty$ $k$-DGA with homology $k[x_m]$ and let $Y$ be an $E_2$ $Hk$-algebra with homotopy ring $k[x_m]$. We do induction on the Postnikov towers of $X$ and $Y$ in $E_2$ $Hk$-algebras to show that $X$ and $Y$ are weakly equivalent as $E_2$ $Hk$-algebras. Let $X[l]$ and $Y[l]$ denote the degree $l$ Postnikov sections of $X$ and $Y$ respectively. It is sufficient to show that there is a map of Postnikov towers that is a weak equivalence for each $l$. Since all the non-trivial homotopy groups are in degrees that are multiples of $m$, we only need consider the Postnikov sections at multiples of $m$. Note that each Postnikov section of $X$ could also be calculated in $E_\infty$ DGAs and therefore each $X[l]$ also comes from a formal $E_\infty$ $k$-DGA. By Lemma \ref{lem unique exterior dga} above, there is a weak equivalence of $E_2$ $Hk$-algebras, 
\[X[m] \we Y[m].\]
Since $X[0]= Y[0] = Hk$, we can set the degree zero map in the Postnikov tower of $Y$ to be 
\[Y[m] \we X[m] \to Hk.\]
This shows that we have a map of Postnikov section up to degree $m$. 

Now we proceed inductively, assume that we have compatible weak equivalences 
\[Y[l] \we X[l]\]
for every $l \leq m(e-1)$ where $2 \leq e$. It is sufficient to show that there is a weak equivalence $Y[me] \we X[me]$ such that the following diagram commutes. 
\begin{equation} \label{diag map of pstnkv towers}
 \begin{tikzcd}
 Y[me] \ar[r,"\simeq"] \ar[d]& X[me] \ar[d]\\
 Y[m(e-1)]\ar[r,"\simeq"]& X[m(e-1)] 
 \end{tikzcd}
 \end{equation}
 The obstructions to existence of such a lift lie in the Andr{\'e}--Quillen cohomology group
 \[H^{me+1}_{\mathcal{C}_2}(Y[me],Hk).\]
 Indeed, this obstruction class is given by  the pull back of the $k$-invariant of the Postnikov extension 
 \[X[me] \to X[m(e-1)]\]
 through the composition of the left vertical arrow and the lower horizontal arrow, see \cite[4.3]{basterra2013BP}. In order to deduce that this obstruction class is trivial, it is sufficient to show that the map 
 \begin{equation}\label{eq map of aq cohomology groups}
     H^{me+1}_{\mathcal{C}_2}(Y[m(e-1)],Hk) \to H^{me+1}_{\mathcal{C}_2}(Y[me],Hk)
 \end{equation}
is trivial. Note we have 
\[H^{me+1}_{\mathcal{C}_2}(Y[m(e-1)],Hk) = k\]
due to Lemma \ref{lem aq cohomology group calculation}. The $k$-invariant of the Postnikov extension 
\[Y[me] \to Y[m(e-1)]\]
also lies in this group and this $k$-invariant is non-trivial; otherwise, the homotopy ring of $Y[me]$ would be a square-zero extension \cite[4.2]{basterra2013BP}. Let $x\in k$ denote this $k$-invariant. 

There is a pullback square 
\begin{equation*} 
 \begin{tikzcd}
 Y[me] \ar[r] \ar[d]& Y[m(e-1)] \ar[d,"i"]\\
 Y[m(e-1)]\ar[r,"x"]& Y[m(e-1)] \vee \Sigma^{me+1}Hk.
 \end{tikzcd}
 \end{equation*}
where $i$ denotes the trivial derivation. This shows that the left vertical map in both diagrams above pulls back $x$ to the trivial element. In other words, $x$ is mapped to $0$ by the map given in \eqref{eq map of aq cohomology groups}. Since this is a map of $k$-modules and $x \neq 0$ in the field $k$, we deduce that the map in \eqref{eq map of aq cohomology groups} is trivial. Hence, the obstruction to the lift shown in Diagram \eqref{diag map of pstnkv towers} is trivial.

This shows that there is a map $Y[me] \to X[me]$ that makes  Diagram \eqref{diag map of pstnkv towers} commute up to homotopy. By inspection on homotopy groups, it is clear that this map is a weak equivalence. 

In order to obtain a weak equivalence $Y[me] \we X[me]$ that makes Diagram \eqref{diag map of pstnkv towers} commute strictly, we could start with Postnikov towers where the objects are cofibrant and the section maps are fibrations, see the proof of Theorem 4.2 in \cite{basterra2013BP}. In this situation, one argues as in the proof of Proposition A.2 in \cite{bayindir2018topeqeinfty} to show that $Y[me]$ and $X[me]$ are weakly equivalent in the category of $E_2$ $Hk$-algebras over $X[m(e-1)]$ and obtain the desired weak equivalence by noting that $Y[me]$ is cofibrant and $X[me]$ is fibrant in this category. 

What is left to prove is the following lemma.

 \begin{lem}\label{lem aq cohomology group calculation}
 Let $Z$ be the $E_2$ $Hk$-algebra corresponding to the formal $E_\infty$ $k$-DGA with homology $k[x_m]/(x_m^{e})$. In this situation, we have 
\[ H^{me+1}_{\mathcal{C}_2}(Z,Hk) \cong k\]
 \end{lem}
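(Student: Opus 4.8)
The plan is to compute the André--Quillen cohomology group $H^{me+1}_{\mathcal{C}_2}(Z, Hk)$ by identifying the relevant $E_2$ topological André--Quillen cohomology with a stable homotopy (i.e.\ topological Hochschild-type) cohomology group and then transporting the problem to a purely algebraic computation over $k$ via Bökstedt periodicity and the formality of $Z$. First I would recall, following \cite[Section 2--4]{basterra2013BP}, that for an augmented $E_2$ $Hk$-algebra $Z \to Hk$ the cohomology $H^{*}_{\mathcal{C}_2}(Z, Hk)$ is the homotopy of the mapping spectrum of $E_2$-derivations, which by the stabilization results of Basterra--Mandell is computed by the $E_2$ (i.e.\ two-fold bar, or $\Sigma^2$-type) topological André--Quillen homology of the augmentation ideal. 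Since $Z$ is the \emph{formal} $E_\infty$ (hence $E_2$) $k$-DGA on $k[x_m]/(x_m^e)$, the augmentation ideal $I$ has homotopy $k$ concentrated in degrees $m, 2m, \dots, (e-1)m$, and I can feed this into the algebraic André--Quillen / Harrison-type complex because formality lets me replace the spectrum-level computation with the corresponding computation of $E_2$-operad (Gerstenhaber/Poisson at the level of homology, but really the $\mathcal{C}_2$-tangent complex) André--Quillen cohomology of the graded $k$-algebra $k[x]/(x^e)$.

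Next I would run the actual algebraic computation. For a truncated polynomial algebra $k[x]/(x^e)$ over a field, the cotangent complex $L_{k[x]/(x^e)/k}$ is well understood: it is $(k[x]/(x^e))\,dx$ in degree $0$ together with a class in degree $1$ coming from the relation $x^e$, so that $L_{k[x]/(x^e)/k} \simeq (k[x]/(x^e)) \otimes (k\langle dx \rangle_{0} \oplus k\langle \varepsilon \rangle_{1})$ with $|dx| = m$ (internal) and the relation class $\varepsilon$ in internal degree $me$ and homological degree $1$. The $E_2$ (rather than $E_\infty$) structure introduces the extra shift: $E_2$ André--Quillen cohomology of a polynomial-type algebra is governed by $\Sigma$-shifted topological Hochschild cohomology, so the relevant group in total degree $me+1$ picks out precisely the contribution of the relation class $\varepsilon$ paired against $Hk$, which is one-dimensional over $k$. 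Concretely, I expect a short computation with the $E_2$-analogue of the (truncated) Harrison or Gerstenhaber complex — or, equivalently, with the fiber sequence relating $\thh^{[2]}$ of $k[x]/(x^e)$ to that of $k$ and $k[x]$ — to show that $H^{me+1}_{\mathcal{C}_2}(Z, Hk)$ is a one-dimensional $k$-vector space, generated by the dual of the relation.

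The main obstacle, and the step I would spend the most care on, is the bookkeeping of the \emph{two} degree shifts at play: the internal grading (everything lives in multiples of $m$) and the homological shift intrinsic to $E_2$ (as opposed to $E_\infty$) André--Quillen cohomology, where the relevant suspension is by $2$ at the operad level but manifests as a single homological shift on the relation class after passing through the Basterra--Mandell identification. Getting the arithmetic $me + 1$ to come out exactly — rather than, say, $me$ or $me+2$ — requires matching the conventions of \cite{basterra2013BP} for the indexing of $H^{*}_{\mathcal{C}_n}$ against the homological degree of the relation class in $L_{k[x]/(x^e)/k}$, and verifying that no further classes (from higher syzygies, or from the $E_2$-Koszul dual side) contribute in that total degree. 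Once the degree count is pinned down, vanishing of all but the relation contribution follows because $k[x]/(x^e)$ is a complete intersection, so its cotangent complex is perfect with homotopy concentrated in homological degrees $0$ and $1$, and the field hypothesis guarantees the resulting $k$-module is free of rank exactly $1$.
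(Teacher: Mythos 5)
Your proposal has a genuine gap at the heart of the argument. You try to control $H^{me+1}_{\mathcal{C}_2}(Z,Hk)$ by appealing to the classical (commutative) cotangent complex $L_{k[x]/(x^e)/k}$, noting that $k[x]/(x^e)$ is a complete intersection so that this complex, base-changed to $k$, is concentrated in homological degrees $0$ and $1$ (classes in internal degrees $m$ and $me+1$). This is true for the $E_\infty$/commutative cotangent complex. But the Basterra--Mandell $E_2$ André--Quillen theory is \emph{not} a shift of the commutative theory, and the $E_2$ tangent complex of $k[x]/(x^e)$ is \emph{not} perfect concentrated in two degrees. It is computed by an iterated ($2$-fold) bar construction: first one takes a bar-type complex $B^1_*$ related to $E_1$-indecomposables, which the paper identifies as $\Lambda(\sigma x_m) \otimes \Gamma(\varphi^e x_m)$ (so with nontrivial classes in every degree $n(me+2)+\varepsilon(m+1)$), and then one takes $\textup{Tor}^{B^1_*}(k,k)$ to obtain $B^2_*$, which looks like $\Gamma(\sigma y) \otimes \Lambda(\sigma z)$ with $|\sigma y| = m+2$, $|\sigma z| = me+3$. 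In particular, the $E_2$ André--Quillen homology has \emph{infinitely many} nonzero classes coming from the divided powers $\gamma_n(\sigma y)$, which your complete-intersection argument would have ruled out. That argument only applies to the commutative cotangent complex; the extra classes here are a genuinely $E_n$-operadic (positive-characteristic) phenomenon.

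The reason none of these extra classes contaminate the target degree is a parity argument, which is the content you are actually missing: since $m$ is even, every $\gamma_n(\sigma y)$ lives in even total degree $n(m+2)$, while the target degree $me+3$ (equivalently, $H^{\mathcal{C}_2}_{me+1}$ under the $B^2_* = k \oplus H^{\mathcal{C}_2}_{*-2}$ convention) is odd. So only $\sigma z$ can contribute, and one gets a $k$. The evenness of $m$ is essential to this and does not appear in your write-up at all. Two further points: the mention of B\"okstedt periodicity is a red herring --- the whole computation takes place in $Hk$-modules and never touches the sphere spectrum; and the paper needs a final step (comparing with a square-zero versus a formal Postnikov extension) to pin down that the answer is $k$ rather than $0$, which your proposal does not address either.
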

 \begin{proof}
 Using the universal coefficient spectral sequence of \cite[3.1]{basterra2013BP}, one sees that this group is the $k$-dual of the Andre Quillen homology group  $H_{me+1}^{\mathcal{C}_n}(Z,Hk)$. Therefore, it is sufficient to show that 
 \[H_{me+1}^{\mathcal{C}_2}(Z,Hk)=k.\]
 By Theorem 3.2 of \cite{basterra2013BP}, there is spectral sequence calculating 
  \[B^1_{*} = k \oplus H^{\mathcal{C}_1}_{*-1}(Z,k).\]
 whose second page is given by 
 \[E^2_{*,*} = \hh^{k}_{*,*}(k[x_m]/(x^{e}),k).\]
 The identification of the second page follows from the description of the bar construction given in \cite[Section 6]{basterra2011thhofEn}.
 We have
\[E^2_{*,*} = \Lambda(\sigma x_m) \otimes \Gamma(\varphi^e x_m)\]
where $\text{deg}(\sigma x_m)= (1,m)$, $\text{deg}(\varphi^e x_m) = (2,em)$  and $\Gamma(\varphi^e x_m)$ denotes the divided power algebra over $k$ on a single generator, see \cite[5.9]{bayindir2019dgaswithpolynomial}. The differentials in this spectral sequence are trivial for degree reasons. We obtain an isomorphism of $k$-modules 
\[B^1_* = \Lambda(y) \otimes \Gamma(z)\]
where $\lv y \rv = m+1$, $\lv z \rv = me+2$.

Note that $Z$ is an $E_\infty$ $Hk$-algebra by hypothesis because it corresponds to a commutative $k$-DGA. Therefore,  $B^1_*$ is a graded commutative ring \cite[Section 3]{basterra2013BP}. From this, we obtain that $y^2=0$.  %Therefore, this spectral sequence is multiplicative and gives the multiplicative structure on the target \cite[3.4]{basterra2013BP}. Furthermore, the multiplication is commutative. Therefore, we obtain that $y^2= 0$. 

There is another spectral sequence with 
\[E^2_{*,*} = \textup{Tor}^{B^1_*}_{*,*}(k,k)\]
converging to 
 \[B^2_{*} = k \oplus H^{\mathcal{C}_2}_{*-2}(Z,k),\]
 see \cite[3.3]{basterra2013BP}.  We only care about this page up to total degree $me+3$. Therefore we can safely assume a ring isomorphism 
 \[B^1_* \cong \Lambda(y) \otimes k[z].\]
  We obtain that
 \[E^2_{*,*} =  \textup{Tor}^{\Lambda(y)}_{*,*}(k,k) \otimes \textup{Tor}^{k[z]}_{*,*}(k,k) \cong \Gamma(\sigma y) \otimes \Lambda(\sigma z).\]
 where $\text{deg}(\sigma y)= (1,m+1)$ and $\text{deg}(\sigma z)= (1,me+2)$. Note that all divided  powers of $\sigma y$ are in even total degrees and therefore they do not contribute to total degree $me+3$. This is the degree we care about and the only contribution to this degree comes from $\sigma z$. Furthermore, $\sigma z$ survives to the $E^\infty$ page because of degree reasons. Therefore we have either 
 \[H^{me+1}_{\mathcal{C}_2}(Z,Hk)= k \textup{\ or \ } H^{me+1}_{\mathcal{C}_2}(Z,Hk)= 0.\]
 If the second option is true, then we would conclude that $Z$ has a unique Postnikov extension with homotopy groups isomorphic to $k[x_m]/(x^{e+1})$ as a $k$-module. This follows by Theorem 4.2 of \cite{basterra2013BP}. We know that there are at least two distinct Postnikov extensions of $Z$ of this type. One is the square-zero extension which  leads to the square-zero extension ring
 \[k[x_m]/(x_m^{e})\oplus \Sigma^{me}k\]
 in homotopy.
 The other Postnikov extension of $Z$ is the $E_2$ $Hk$-algebra corresponding to the formal $E_\infty$  $k$-DGA with homology $k[x_m]/(x^{e+1})$. Note that this Postnikov extension has a homotopy ring that is not isomorphic to the homotopy ring of the square-zero extension. This shows that the second option above is not true. 
 \end{proof}

\section{Trace methods}

We give in this section a quick recap on the  Nikolaus-Scholze approach to cyclotomic spectra. 

\subsection{Cyclotomic spectra and topological cyclic homology}

As discussed earlier, our $K$-theory calculations reduce to topological cyclic homology calculations as a consequence of the results of Dundas, Goodwillie and McCarthy. 

For our  calculations of topological cyclic homology, we use the recent Nikolaus-Scholze definition of topological cyclic homology and cyclotomic spectra. Note that for a spectrum $X$ with a $C_p$-action, the Tate construction $X^{tC_p}$ denotes the cofiber of the norm map $X_{hC_p} \to X^{hC_p}$, see \cite[\RomanNumeralCaps{1}.1]{nikolausscholze2018topologicalcyclic}.

\begin{defn}\cite[1.3]{nikolausscholze2018topologicalcyclic}
A cyclotomic spectrum is a spectrum $X$ with a $\T$-action and $\T$-equivariant maps $\varphi \co X \to X^{tC_p}$ for every prime $p$ where the $\T$-action on the right hand side is given by the residual $\T/C_p\cong \T$-action. These maps are called the Frobenius maps of $X$. 
\end{defn}

This definition agrees with the older B\"okstedt--Hsiang--Madsen definition of cyclotomic spectra for cyclotomic spectra that are bounded below in homotopy \cite[1.4]{nikolausscholze2018topologicalcyclic}. The bounded below
hypothesis is satisfied by all the spectra that we consider in this work.

The main examples of cyclotomic spectra are the  topological Hochschild homology of  ring spectra. Indeed,  $\thh(-)$ is a symmetric monoidal functor \begin{equation} \label{eq thh is symmetric monoidal}
    \textup{Alg}_{E_1}(\textup{Sp}) \to \textup{Cyc Sp,}
\end{equation}
see \cite[Section \RomanNumeralCaps{4}.2]{nikolausscholze2018topologicalcyclic}.

With this structure, the topological cyclic homology of a connective ring spectrum $A$ is defined via the following fiber sequence.

\[\tc(A) \to \thh(A)^{h\T} \xrightarrow{\prod_{p \in \mathbb{P}}(\varphi^{h\T}-can)} \prod_{p \in \mathbb{P}} (\thh(A)^{tC_p})^{h\T}\]
Here, $can$ is the canonical map obtained by the identification of the middle term as $(\thh(A)^{hC_p})^{h\T/C_p}$.

Note that if $p$ is invertible on $A$, then $\thh(A)^{tC_p}$= 0. This simplifies the definition of cyclotomic spectra as well as the definition of topological cyclic homology in various situations. For instance, only one of the factors of the product above is possibly nontrivial when $A$ is an $H\fp$-module.

\section{$\thh$ of graded ring spectra}\label{gradedthh}
The results of this section help explain and place in a general context one of the key ingredients to our computation of $K(\thh(\fp))$, namely that of a weight decomposition of $\thh(\thh(\fp))$ into homologically more tractable $\T$-equivariant spectra, thus facilitating the trace methods used here and in \cite{hesselholt1997polytopesandtruncatedpolynomial, speirs2019ktheorycoordinate,speirs2020truncatedpolynomial} to compute $K$-theory in various settings.

In particular, we introduce the $\infty$-category of $\Z$-graded spectra and describe variant of $\thh$ for $E_1$-algebras in this category. We then use it to show that when $A$ is a ring spectrum admitting a grading, then $\thh(A)$ inherits a canonical grading.  
We would like to point out that more general results along the lines of those obtained in this section also appear in \cite[Appendix A]{antieau2020beilinson}.
\begin{defn}
Let $\mathcal{C}$ be an $\infty$-category. The category of graded objects of $\mathcal{C}$ is
$$
\mathcal{C}^{\Z}:= \on{Fun}(\Z, \mathcal{C}) 
$$
where $\Z$ is the integers regarded as a discrete $\infty$-groupoid. 
\end{defn}

\begin{defn}
Let $\C^{\otimes}$ be a symmetric monoidal $\infty$-category. Then $\C^\Z$ inherits 
a symmetric monoidal structure, the so-called Day convolution product from that of $\C^{\otimes}$ and from the abelian group structure on $\Z$. Concretely, if $X,Y \in \C^\Z$, then $X \otimes Y$ is given by
$$
(X \otimes Y)(n) \simeq  \oplus_{i+j=n} X(i) \otimes Y(j) 
$$
(see \cite[Section 2.3]{lurie2015rotation} for a more comprehensive account.)
\end{defn}

In order to prove the main proposition of this section, we briefly introduce the monoidal envelope construction appearing in \cite{nikolausscholze2018topologicalcyclic} (see also \cite[2.2.4.1]{lurie2016higher})

\begin{defn}
Let $\mathcal{C}^\otimes$ be a symmetric monoidal $\infty$-category. The monoidal envelope  of $\mathcal{C}^\otimes$ is the fiber product
$$
\C^{\otimes}_{\on{act}}:= \mathcal{C}^\otimes \times_{\on{Fin}_*} \on{Fin}
$$ 
where $\on{Fin} \subset \on{Fin}_*$ is the subcategory of the category of based finite sets consisting of the active morphisms. In this setting, a morphism $f: [n] \to [m]$ of based finite sets is active if $f^{-1}(*)= *$.
\end{defn}
Note that the fiber of $\C^{\otimes}_{\on{act}}$ over a finite set $I$ is just $\C^I$, so that objects may be thought of as lists of objects of $\C$. We remark that this comes equipped with a canonical symmetric monoidal structure 
$$
\oplus: \C^{\otimes}_{\on{act}} \otimes \C^{\otimes}_{\on{act}} \to \C^{\otimes}_{\on{act}}  \, \, \, \, \, \,  ((X_i)_{i \in I}, (X_j)_{j \in J})\mapsto (X_k)_{k \in I \sqcup J},
$$
together with a natural symmetric monoidal functor 
$$
\iota_{\C}: \C^{\otimes} \to \C^{\otimes}_{\on{act}}.
$$
By \cite[Remark 3.4]{nikolausscholze2018topologicalcyclic}, this admits a left adjoint
$$
\otimes: \C^{\otimes}_{\on{act}} \to \C^\otimes
$$

\begin{prop} \label{splittingofthh}
The functor  
$$
\thh: \on{Alg}_{\on{Sp}} \to \on{Sp}^{B \T}
$$
may be promoted to a functor 
$$
\widetilde{\thh}: \on{Alg}_{\on{Sp}^\Z}  \to \on {Sp}^{\Z \times B \T},
$$
where $\on{Sp}^{\Z \times B\T}$ denotes the $\infty$-category of graded spectra with a $\T$-action. 
Moreover, there exists a natural equivalence
$$
\colim_{\Z }\widetilde{\thh(-)} \simeq \thh(-)
$$
\end{prop}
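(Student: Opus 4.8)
The plan is to identify $\widetilde{\thh}$ with topological Hochschild homology computed internally to the symmetric monoidal $\infty$-category $\on{Sp}^{\Z}$, and then to deduce the colimit formula from the fact that $\colim_{\Z}\co\on{Sp}^{\Z}\to\on{Sp}$ is a colimit-preserving symmetric monoidal functor. First I would revisit the construction of $\thh$ in \cite[Section \RomanNumeralCaps{4}.2]{nikolausscholze2018topologicalcyclic} and carry it out for an arbitrary presentably symmetric monoidal $\infty$-category $\mathcal{C}^{\otimes}$: for $A\in\on{Alg}(\mathcal{C})$ one forms the cyclic bar construction, a functor $B^{\mathrm{cyc}}(A)\co\Lambda^{\op}\to\mathcal{C}$ out of Connes' cyclic category with $B^{\mathrm{cyc}}_{n}(A)=A^{\otimes(n+1)}$, and sets
\[
\thh^{\mathcal{C}}(A):=\colim_{\Lambda^{\op}}B^{\mathrm{cyc}}(A)\in\mathcal{C}^{B\T}.
\]
The monoidal envelope $\mathcal{C}^{\otimes}_{\on{act}}$ introduced above is exactly the bookkeeping device that produces $B^{\mathrm{cyc}}$ --- together with its cyclic structure --- functorially out of $\iota_{\mathcal{C}}$, the operation $\oplus$, and the left adjoint $\otimes\co\mathcal{C}^{\otimes}_{\on{act}}\to\mathcal{C}^{\otimes}$. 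The point I want to extract is that $\mathcal{C}^{\otimes}\mapsto\thh^{\mathcal{C}}$ is natural in $\mathcal{C}^{\otimes}$: any colimit-preserving symmetric monoidal functor $F\co\mathcal{C}^{\otimes}\to\mathcal{D}^{\otimes}$ fits into a square
\begin{equation*}
\begin{tikzcd}
\on{Alg}(\mathcal{C}) \ar[r,"\thh^{\mathcal{C}}"] \ar[d,"F"] & \mathcal{C}^{B\T} \ar[d,"F"] \\
\on{Alg}(\mathcal{D}) \ar[r,"\thh^{\mathcal{D}}"] & \mathcal{D}^{B\T}
\end{tikzcd}
\end{equation*}
commuting up to natural equivalence, since $F$ intertwines $\iota$, $\oplus$, and $\otimes$ and preserves the colimit over $\Lambda^{\op}$; in particular the comparison is one of $\T$-equivariant objects, because this last colimit is exactly what carries the $\T$-action.

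Next I would specialize to $\mathcal{C}^{\otimes}=\on{Sp}^{\Z}$ equipped with the Day convolution product, which is presentably symmetric monoidal, so the construction applies and I set $\widetilde{\thh}:=\thh^{\on{Sp}^{\Z}}$; its target is $(\on{Sp}^{\Z})^{B\T}\simeq\on{Fun}(\Z\times B\T,\on{Sp})=\on{Sp}^{\Z\times B\T}$ by the exponential law, using that $\Z$ is a discrete $\infty$-groupoid. The functor $\colim_{\Z}\co\on{Sp}^{\Z}\to\on{Sp}$ is then symmetric monoidal: it is left Kan extension $p_{!}$ along the monoid homomorphism $p\co\Z\to\ast$, and left Kan extension along a monoidal functor of indexing categories is symmetric monoidal for Day convolution \cite[Section 2.3]{lurie2015rotation} --- concretely this is the identity $\colim_{\Z}(X\otimes Y)\simeq\bigl(\bigoplus_{i}X(i)\bigr)\otimes\bigl(\bigoplus_{j}Y(j)\bigr)$, valid because $\otimes$ in $\on{Sp}$ distributes over direct sums. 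Moreover $\colim_{\Z}$ is a left adjoint (to the constant-diagram functor), hence preserves all colimits and in particular geometric realizations. Feeding $F=\colim_{\Z}$ into the naturality square then yields the asserted natural equivalence
\[
\colim_{\Z}\,\widetilde{\thh}(A)\;\simeq\;\thh\bigl(\colim_{\Z}A\bigr)\qquad(A\in\on{Alg}(\on{Sp}^{\Z}))
\]
of $\T$-equivariant spectra, where the left-hand $\colim_{\Z}$ collapses the weight grading and $\colim_{\Z}A$ on the right is $A$ regarded as an ungraded $E_{1}$-ring.

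The step I expect to be the main obstacle is the first one: pinning down the naturality of $\thh^{(-)}$ in the symmetric monoidal $\infty$-category \emph{together with the full $\T$-action}, rather than merely on underlying objects. This is precisely where the monoidal envelope earns its place --- since $B^{\mathrm{cyc}}$ and its cyclic structure are assembled functorially from $\iota_{\mathcal{C}}$, $\oplus$, and $\otimes\co\mathcal{C}^{\otimes}_{\on{act}}\to\mathcal{C}^{\otimes}$, all of which a colimit-preserving symmetric monoidal $F$ preserves, the comparison square commutes compatibly with its $B\T$-structure. Everything else --- presentability of $\on{Sp}^{\Z}$, the exponential-law identification of the target, and the symmetric monoidality and cocontinuity of $\colim_{\Z}$ --- is routine.
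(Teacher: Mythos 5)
Your proposal is correct and takes essentially the same approach as the paper: both construct $\widetilde{\thh}$ as the cyclic bar construction internal to $\on{Sp}^{\Z}$ (using the monoidal envelope bookkeeping) and then compare with $\thh$ via the observation that $\colim_{\Z}$ is a cocontinuous symmetric monoidal functor, the commutativity with $\otimes\co\C^{\otimes}_{\on{act}}\to\C^{\otimes}$ being exactly the content of Lemma \ref{commutativityofactshit}. Your phrasing in terms of ``naturality of $\thh^{(-)}$ in colimit-preserving symmetric monoidal $F$'' is a slight generalization of the paper's explicit diagram chase, but the underlying argument is the same.
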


\begin{proof}
We briefly recall the construction of $\thh$ as it appears in \cite{nikolausscholze2018topologicalcyclic}. Given an associative algebra $A$, the spectrum $\thh(A)$ is defined as the realization of the following cyclic object 
$$
\Lambda^{op} \to \assact \xrightarrow{A^{\otimes}} \spact \xrightarrow{\otimes} \on{Sp}
$$
The first functor is defined in \cite[Proposition B.1]{nikolausscholze2018topologicalcyclic}. Now let $(A(n))_{n \in \Z}$ be a graded $E_1$-algebra (an object in $\on{Alg}_{\on{Sp}^\Z}$) such that 
$\colim_\Z A(n) \simeq \bigvee_{n \in \Z} A(n) \simeq A$. We define $\widetilde{\thh(A)}$ to be the realization of the cyclic object in $\on{Sp}^{\Z}$ classified by the following diagram
$$
\Lambda^{op} \to \assact \xrightarrow{(A(n))_{n \in \Z}^\otimes} \spactprod \to \on{Sp}^\Z.
$$
We now show that the following diagram is commutative. 
\begin{equation} 
 \begin{tikzcd}
\Lambda^{op} \ar[r,""] \ar[d, "=" ]& 
\assact \ar[d, "="] \ar[r,""]& \spactprod  \ar[d, "({\colim_\Z})^\otimes_{\on{act}}"] \ar[r, ""]& \on{Sp}^\Z \ar[d, "\colim_\Z"]&
\\
 \Lambda^{op} \ar[r,""]&  \assact \ar[r,""]& \spact \ar[r,""]& \on{Sp}.
 \end{tikzcd}
 \end{equation}
Keeping in mind the fact that 
$$
\colim_\Z: \on{Sp}^\Z \to \on{Sp}
$$
is symmetric monoidal, the commutativity of the middle square follows by applying the pullback along the map $\on{Fin} \to \on{Fin}_*$
to the commutative square of $\infty$-operads  
$$
   \begin{tikzcd}
     \on{Ass} \ar[r,"A(n)"] \ar[d, "=" ]& \on{Sp}^{\Z} \ar[d, "\colim_\Z"]\\
    \on{Ass} \ar[r,"A"] & \on{Sp}^\otimes,
    \end{tikzcd}
$$
classifying the compatible associative algebra structure on $(A(n))_{n \in \Z}$ and $A$; the commutativity of the right square follows from Lemma   \ref{commutativityofactshit}
below. 
Hence, we have shown that one can apply the ``cyclic bar construction" to a graded monoid in spectra to obtain a cyclic object in graded spectra. We remark that $\on{Sp}^{\Z}$ is a product in $\widehat{Cat_\infty}$\footnote{We use the notation $\widehat{Cat_\infty}$ to denote the $\infty$-category of (not necessarily small) $\infty$-categories, cf. \cite[Chapter 3]{lurie2009higher}} (in fact, for each $n \in \Z$, one has a retract $\on{Sp}^{\Z} \to \on{Sp}$ see e.g. \cite[Proof of Proposition 4.2]{moulinos2019geometry}); hence a cyclic object  in $\on{Sp}^\Z$ is equivalent to a cyclic object in $\on{Sp}$ for every integer $n$. 
\end{proof}

We have used the following lemma in the above proof:

\begin{lem}\label{commutativityofactshit}
Suppose $F: \C^\otimes \to \mathcal{D}^\otimes$ is a symmetric monoidal functor. Then the following diagram commutes 
\begin{equation*}
    \begin{tikzcd}
     \C^{\otimes}_{\on{act}} \ar[r,"F_{\on{act}}"] \ar[d, "\otimes" ]& \mathcal{D}^{\otimes}_{\on{act}} \ar[d, "\otimes"]\\
    \C^{\otimes} \ar[r,"F"] & \mathcal{D}^\otimes
    \end{tikzcd}
\end{equation*}
\end{lem}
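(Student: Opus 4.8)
The plan is to recognize the functor $\otimes \colon \C^{\otimes}_{\on{act}} \to \C^{\otimes}$ as the counit of an adjunction and to read off the lemma from the naturality of that counit. Concretely: the monoidal envelope defines a functor $\on{Env}$ from $\infty$-operads to symmetric monoidal $\infty$-categories which is left adjoint to the forgetful functor $U$ (this is exactly the content of the universal property \cite[2.2.4]{lurie2016higher}); for a symmetric monoidal $\C^\otimes$ one has $\on{Env}(U\C^\otimes) = \C^{\otimes}_{\on{act}}$, the unit of the adjunction is $\iota_{\C}$, and the counit $\on{Env}(U\C^\otimes) \to \C^{\otimes}$ is precisely the left adjoint $\otimes$ of $\iota_{\C}$ from \cite[Remark 3.4]{nikolausscholze2018topologicalcyclic}. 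The first step is to check that, under the pullback description $\C^{\otimes}_{\on{act}} = \C^\otimes \times_{\on{Fin}_*}\on{Fin}$ and $F_{\on{act}} = F \times_{\on{Fin}_*}\on{Fin}$, the functor $F_{\on{act}}$ is identified with $\on{Env}(UF)$. Granting this, the square in the lemma is literally the naturality square of the counit $\otimes$ with respect to the symmetric monoidal functor $F$, hence commutes.

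If one would rather not set up the adjunction abstractly, there is an equivalent hands-on route. Note that $\otimes \circ F_{\on{act}}$ and $F \circ \otimes$ are both symmetric monoidal functors $\C^{\otimes}_{\on{act}} \to \mathcal{D}^\otimes$, being composites of such. By the universal property of the envelope \cite[2.2.4.9]{lurie2016higher} (see also \cite[Section 3]{nikolausscholze2018topologicalcyclic}), restriction along $\iota_{\C}$ induces an equivalence from the $\infty$-category of symmetric monoidal functors $\C^{\otimes}_{\on{act}} \to \mathcal{D}^\otimes$ to the $\infty$-category of $\infty$-operad maps (= lax symmetric monoidal functors) $\C^\otimes \to \mathcal{D}^\otimes$. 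So it suffices to compare restrictions. Using the naturality of the unit, $F_{\on{act}} \circ \iota_{\C} \simeq \iota_{\mathcal{D}} \circ F$, together with the triangle identity $\otimes \circ \iota \simeq \id$ (a one-element list is sent to its unique entry), one computes $(\otimes \circ F_{\on{act}}) \circ \iota_{\C} \simeq \otimes \circ \iota_{\mathcal{D}} \circ F \simeq F$ and $(F \circ \otimes) \circ \iota_{\C} \simeq F \circ (\otimes \circ \iota_{\C}) \simeq F$. Both restrict to $F$, so the two functors agree and the square commutes.

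The only real obstacle I anticipate is bookkeeping, namely matching the concrete objects in the statement — $\C^{\otimes}_{\on{act}}$ defined as a fiber product over $\on{Fin}_*$, $F_{\on{act}}$ as the induced map of fiber products, and $\otimes$ as the left adjoint supplied by \cite[Remark 3.4]{nikolausscholze2018topologicalcyclic} — with the envelope, its action on morphisms, and the associated counit. On underlying objects the lemma says nothing more than $\bigotimes_{i \in I} F(X_i) \simeq F\big(\bigotimes_{i \in I} X_i\big)$ for a list $(X_i)_{i \in I}$, which is part of the data of $F$ being symmetric monoidal; the value of the argument above is that it produces this family of equivalences coherently, for free, rather than by hand.
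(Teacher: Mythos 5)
Your ``hands-on route'' is essentially identical to the paper's proof: the paper invokes the equivalence $\on{Fun}_{lax}(\C,\mathcal{D}) \simeq \on{Fun}^\otimes(\C^\otimes_{\on{act}},\mathcal{D})$ from \cite[Proposition III.3.2]{nikolausscholze2018topologicalcyclic}, reduces to comparing restrictions along $\iota_\C$, and then uses $\otimes \circ \iota \simeq \id$ together with $F_{\on{act}}\circ\iota_\C \simeq \iota_{\mathcal{D}}\circ F$ exactly as you do. Your first route (counit naturality for the envelope adjunction) is a clean repackaging of the same idea and is also correct.
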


\begin{proof}
As described in \cite[Proposition III.3.2]{nikolausscholze2018topologicalcyclic}, there is an equivalence 
$$
\on{Fun}_{lax}(\C, \mathcal{D}) \simeq \on{Fun}^{\otimes}( \C^{\otimes}_{\on{act}}, \mathcal{D}) 
$$
where the left hand side denotes the $\infty$-category of operad maps and the equivalence is induced by restriction along the canonical lax monoidal functor $\iota_C: \C^\otimes \to \C^{\otimes}_{\on{act}}$. Moreover, the symmetric monoidal left adjoint $\otimes: \C^{\otimes}_{\on{act}} \to \C$ can be characterized as the image of the identity functor $id \in \on{Fun}_{lax}(\C, \mathcal{C})$ under this equivalence (\cite[Remark III.3.4]{nikolausscholze2018topologicalcyclic}) From this we deduce that 
\begin{equation}\label{compatibilitything}
    \otimes \circ \iota \simeq id
\end{equation}
for any symmetric monoidal category $\C$. 
Hence, the diagram in the statement commutes if and only if it commutes upon precomposition by $\iota_\C: \C^{\otimes} \to \C^{\otimes}_{\on{act}}$:
$$
\otimes_{\mathcal{D}} \circ F_{\on{act}} \circ \iota_C  \simeq   F \circ \otimes_C \circ \iota_C. 
$$
This is true since $\otimes_\C \circ \iota_\C \simeq id_\C, \otimes_\mathcal{D} \circ \iota_\mathcal{D} \simeq id_{\mathcal{D}}$ (by \ref{compatibilitything}), together with the fact that $ F_{\on{act}} \circ \iota_C  \simeq \iota_D \circ F$ (by naturality of $\iota_C$).
\end{proof}

As a corollary, we obtain, a canonical splitting on the topological Hochschild homology of a graded algebra $A$.

\begin{cor}\label{cor grading in thh from a graded ring spectrum}
Let $A$ be a graded $E_1$-algebra, so that there exists an algebra
$(A(n))_{n\in \Z}$ in $\on{Sp}^{\Z}$ with $\colim_\Z (A(n)) \simeq  A$. Then $\thh(A)$ admits a canonical $\T$-equivariant decomposition 
$$
\thh(A) \simeq \bigvee_{m \in \Z} B(m)
$$
\end{cor}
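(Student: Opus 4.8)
The plan is to extract the decomposition directly from Proposition \ref{splittingofthh}. Given the graded $E_1$-algebra $(A(n))_{n \in \Z}$ with $\colim_\Z A(n) \simeq A$, Proposition \ref{splittingofthh} produces an object $\widetilde{\thh}((A(n))_{n\in\Z}) \in \on{Sp}^{\Z \times B\T}$, i.e.\ a graded spectrum with $\T$-action, together with a natural equivalence $\colim_\Z \widetilde{\thh}((A(n))_{n\in\Z}) \simeq \thh(A)$, which is moreover $\T$-equivariant since the whole construction was carried out $\T$-equivariantly (the cyclic structure is what produces the $\T$-action, and it is visible already at the graded level). So I would first set $B(m) := \widetilde{\thh}((A(n))_{n\in\Z})(m)$, the $m$-th graded piece, which is canonically a spectrum with $\T$-action.

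Next I would identify the colimit over $\Z$ of a graded spectrum with the wedge of its graded pieces. Since $\Z$ is a discrete $\infty$-groupoid, $\on{Sp}^{\Z} = \on{Fun}(\Z, \on{Sp})$ and the colimit functor $\colim_\Z \co \on{Sp}^\Z \to \on{Sp}$ is precisely the functor sending a family $(X(m))_{m \in \Z}$ to $\bigvee_{m \in \Z} X(m)$ (a coproduct indexed by a set is a wedge in spectra). This is compatible with the $\T$-action: the forgetful functor $\on{Sp}^{B\T} \to \on{Sp}$ preserves colimits, and in fact $\colim_\Z$ upgrades to a functor $\on{Sp}^{\Z \times B\T} \to \on{Sp}^{B\T}$ computing the same underlying wedge with the induced diagonal-ish $\T$-action coming from the $\T$-action on each summand. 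Combining these two steps gives the $\T$-equivariant equivalence
$$
\thh(A) \simeq \colim_\Z \widetilde{\thh}((A(n))_{n\in\Z}) \simeq \bigvee_{m \in \Z} B(m),
$$
which is the desired statement.

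I do not expect a serious obstacle here, as the corollary is essentially a repackaging of Proposition \ref{splittingofthh}. The one point deserving care is the $\T$-equivariance of the identification $\colim_\Z (-) \simeq \bigvee_m (-)$: one should note that $\colim_\Z$ is a left adjoint (to the constant-diagram functor) hence preserves all colimits including the $\T$-action data, and that evaluation at each $m \in \Z$, $\on{Sp}^{\Z \times B\T} \to \on{Sp}^{B\T}$, is well defined because $\Z$ is discrete so $\on{Sp}^{\Z \times B\T} \simeq \on{Fun}(\Z, \on{Sp}^{B\T})$. With these observations in place the decomposition and its equivariance are immediate; the genuinely substantive content — that $\widetilde{\thh}$ exists and that its colimit recovers $\thh$ — has already been established in Proposition \ref{splittingofthh}.
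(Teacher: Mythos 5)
Your proposal is correct and follows essentially the same route as the paper: both deduce the corollary directly from Proposition \ref{splittingofthh}, identifying $\colim_\Z$ over the discrete groupoid $\Z$ with the wedge of the graded pieces $B(m)$ and noting that the identification is $\T$-equivariant because the splitting is already present at the level of the cyclic object (the paper makes this last point explicit by citing \cite[Proposition B.5]{nikolausscholze2018topologicalcyclic} for realization preserving the splitting in $\on{Sp}^{B\T}$). No substantive difference.
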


\begin{proof}
This follows from Proposition  \ref{splittingofthh} since the cyclic object realizing to $\thh(A)$ splits as a cyclic object in spectra for each $n \in \Z$, together with the fact (eg. \cite[Proposition B.5]{nikolausscholze2018topologicalcyclic}) that upon applying realization, this results in splitting of objects in $\on{Sp}^{B \T}$.  
\end{proof}

From now on, we will let $B(-)$ denote the components of $\thh(A)$ guaranteed by Corollary \ref{cor grading in thh from a graded ring spectrum} for a given graded $E_1$-algebra $A$. The decomposition is compatible with the Frobenius maps 
$$
\varphi: \thh(A) \to \thh(A)^{t C_p}
$$
in the following manner:

\begin{prop}\label{lem frobenius also split}
Let $A$ be a graded $E_1$-algebra. For every prime p, the $p$-typical Frobenius map on $\thh(A)$ restricts to maps \[\varphi_m \co B(m) \to B(pm)^{tCp}\]
on the summands.
\end{prop}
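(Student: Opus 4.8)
The plan is to leverage the fact that the weight decomposition of $\thh(A)$ is not an ad hoc splitting but comes from a genuine lift of the cyclic bar construction to the category of graded spectra $\on{Sp}^{\Z}$, as established in Proposition \ref{splittingofthh} and Corollary \ref{cor grading in thh from a graded ring spectrum}. The key point is that the Frobenius map $\varphi$ is itself constructed functorially and compatibly with this graded refinement. So first I would recall, following \cite[Section IV.2]{nikolausscholze2018topologicalcyclic}, that for a cyclotomic spectrum arising as $\thh$ of an algebra, the $p$-typical Frobenius $\varphi\co \thh(A) \to \thh(A)^{tC_p}$ is built levelwise from the cyclic object: it is the composite of the diagonal-type map relating the cyclic bar construction to its $C_p$-fixed points (the cyclotomic structure map of the cyclic nerve) with the Tate-valued refinement. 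The content of the statement is that all of this can be run internally in $\widetilde{\thh}$, i.e.\ in $\on{Sp}^{\Z \times B\T}$.

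The main step is to produce a graded Frobenius $\widetilde{\varphi}\co \widetilde{\thh}(A) \to \widetilde{\thh}(A)^{tC_p}$ in $\on{Sp}^{\Z \times B\T}$, where on the source we use the grading from Proposition \ref{splittingofthh} and on the target we use the grading obtained by first taking $C_p$-geometric-fixed-points/Tate construction componentwise and then reindexing. Concretely, the edgewise subdivision of the cyclic bar construction exhibits $\thh(A)$ with its $C_p$-action as related to $\thh(A)$ again; on the graded level, the weight-$m$ summand of the subdivided object, upon passing to $C_p$-Tate, lands in weight $pm$ — this is exactly the multiplication-by-$p$ effect of the $p$-fold edgewise subdivision on the weight grading (each simplex in the subdivision is a $p$-fold concatenation, so its total weight is multiplied by $p$). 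Thus the graded structure map, componentwise, is a map $B(m) \to (\text{weight } pm \text{ part of } \thh(A)^{tC_p})$, and since the grading is a limit-preserving decomposition (each $\on{Sp}^\Z \to \on{Sp}$ projection is exact, hence commutes with the Tate construction), the weight-$pm$ part of $\thh(A)^{tC_p}$ is $B(pm)^{tC_p}$. Taking $\colim_\Z$ recovers the ungraded Frobenius by the compatibility in Proposition \ref{splittingofthh}, and restricting to the weight-$m$ summand of the source gives the asserted $\varphi_m\co B(m) \to B(pm)^{tC_p}$.

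The technical heart — and the step I expect to be the main obstacle — is checking that the $p$-fold edgewise subdivision interacts with the Day convolution grading precisely by multiplying weights by $p$, at the level of $\infty$-categories rather than just on homotopy groups. One has to verify that the subdivision functor $\on{sd}_p\co \Lambda^{op} \to \Lambda^{op}$ precomposed with the graded cyclic object of Proposition \ref{splittingofthh} is again computed by a graded cyclic object, and identify its weight decomposition; the cleanest way is to observe that the $n$-simplices of the subdivided object are $(np)$-fold tensor powers arranged in a $p$-periodic pattern, so on $\on{Sp}^{\Z}$ the relevant functor factors through the $p$-th power map on the grading group $\Z$. Once this is in place, everything else is formal: the Tate construction is an exact functor $\on{Sp}^{B C_p} \to \on{Sp}^{B\T}$ (Verdier quotient of exact functors) and hence commutes with the direct-sum decomposition indexed by $\Z$, so the componentwise description of the target grading is automatic. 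An alternative, if one wants to avoid explicit subdivision bookkeeping, is to invoke \cite[Appendix A]{antieau2020beilinson}, where the filtered/graded refinement of $\thh$ together with its cyclotomic structure is developed, and simply cite that the Frobenius is a morphism of graded cyclotomic spectra; I would mention this as a remark but carry out the subdivision argument as the primary proof since it is self-contained given what the paper has already set up.
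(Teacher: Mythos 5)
Your strategy is on the right track and is recognizably the same family of argument as the paper's: both recall that the Nikolaus–Scholze Frobenius on $\thh(A)$ is built levelwise from the cyclic bar construction, and both observe that the relevant $p$-fold operation carries weight $m$ to weight $pm$. But there is a gap in what you present as the technical heart. Your observation that ``the $p$-fold edgewise subdivision multiplies weights by $p$'' only describes the \emph{grading on the target} cyclic object (whose $n$-simplices are $(np)$-fold smash powers); it does not by itself explain why the Frobenius \emph{map}, restricted to $B(m)$, lands in the weight-$pm$ summand of that target. That is the actual content of the proposition, and to get it you need an additional input you never make explicit.

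The mechanism the paper uses is the naturality of the Tate diagonal $\Delta_p \co X \to (X^{\wdg p})^{tC_p}$. Applied to the inclusion $B_n(m) \into A^{\wdg n}$, naturality gives a commuting square forcing $\Delta_p$ on $B_n(m)$ to factor through $(B_n(m)^{\wdg p})^{tC_p}$; then the fact that the multiplication on $A$ comes from an algebra in graded spectra is what places $B_n(m)^{\wdg p}$ inside the weight-$pm$ summand $B_{np}(pm)$ of $A^{\wdg np}$. Those two ingredients — Tate-diagonal naturality plus gradedness of the multiplication — are the heavy lifting; the subdivision bookkeeping you flag is comparatively routine. You gesture at the right construction when you mention ``the diagonal-type map,'' so this is a gap in execution rather than a wrong route: spell out the naturality of $\Delta_p$ against the summand inclusion and your argument lines up with the paper's. (Also, you do not actually need to build a full graded Frobenius $\widetilde{\varphi}$ in $\on{Sp}^{\Z \times B\T}$, nor to endow $\thh(A)^{tC_p}$ with a grading; the paper avoids that commitment and works one summand at a time via the naturality square, which sidesteps the issue that the Tate construction does not commute with infinite coproducts.)
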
 

\begin{proof}
We recall from \cite{nikolausscholze2018topologicalcyclic} the construction of the maps $\thh(A) \to \thh(A)^{t C_p}$. The key ingredient is a natural transformation{\color{red}}, from the functor 
$$
I: N(\on{Free}_{Cp}) \times_{N(\on{Fin})} \on{Sp}_{\on{act}}^{\otimes} \to \on{Sp}
$$
given heuristically by 
$$
(S, (X)_{\overline{s} \in S/C_p}) \mapsto \bigotimes_{\overline{s} \in S/C_p}X_{\overline{s}}
$$
to the functor 
$$
\tilde{T}_p: N(\on{Free}_{Cp}) \times_{N(\on{Fin})} \on{Sp}_{\on{act}}^{\otimes} \to \on{Sp}
$$
given by 
$$
(S, (X)_{\overline{s} \in S/C_p})  \mapsto (\bigotimes_{s \in S}X_{s})^{t C_p}. 
$$
This makes precise the following natural transformation of cyclic objects:

\begin{equation} 
 \begin{tikzcd}
... \ar[r, shift left =2] \ar[r,shift right ] \ar[r, shift left ] \ar[r, ""] & 
A^{\wedge 3} \ar[d, "\Delta_p"]  \ar[r,shift right ] \ar[r, shift left ] \ar[r, ""] & A^{\wedge 2}  \ar[d, "\Delta_p"] \ar[r, shift right] \ar[r, ""]& A \ar[d, "\Delta_p"]&
\\
... \ar[r, shift left =2]  \ar[r, shift left ] \ar[r, ""]  \ar[r,shift right] \ar[r, ""]&  (A^{\wedge 3p})^{t C_p} \ar[r, shift left ] \ar[r, shift right] \ar[r,""]& (A^{\wedge 2p})^{t C_p}  \ar[r, shift right] \ar[r,""]& (A^{\wedge p})^{t C_p}
 \end{tikzcd}
 \end{equation}
 
The existence of this natural transformation follows from \cite[Lemma III.3.7]{nikolausscholze2018topologicalcyclic}.
Note that in each cyclic degree, the map is precisely the Tate diagonal. By naturality of the Tate diagonal map the following diagram commutes

\begin{equation} 
 \begin{tikzcd}
 B_n(m) \ar[r,""] \ar[d, "\Delta" ]& \bigvee_m B_n(m) \simeq A^{\wedge n} \ar[d, "\Delta"]\\
 (B_n(m)^{\otimes p})^{t C_p} \ar[r,""]&  ((\bigvee_m   B_n(m))^{\otimes p})^{tC_p} \simeq (\bigvee_m B_{np}(m))^{t C_p} \simeq (A^{\wedge np})^{t C_p}
 \end{tikzcd}
 \end{equation}
where $B_n(m)$ denotes the $n$th simplicial level of the cyclic object $B_{\bullet}(m)$. The top horizontal row represents the inclusion, in simplicial degree $n$ of the summand 
$B(m) \to \thh(A)$.
In particular, the right hand map is the natural transformation $I \to \tilde{T}_p$ aluded to above in simplicial degree $n-1$.  Finally, the  bottom horizontal map is the Tate fix points functor $(-)^{tC_p}$ applied to 
the inclusion map of the summand 
$$
B_n(m)^{\wedge p } \to B_{np}(mp)= \bigvee_{i_1 +... +i_{p}= mp}B_n(i_1)\otimes...\otimes B_n(i_p),
$$
where $i_1=...=i_p=m$. Here, $B_{np}(mp)$ may be understood as the $np$-th level of the cyclic object which upon taking realization recovers $B(pm)$. 
We remark that $B_n(m)^{\wedge p}$ has to map into the summand $B_{np}(pm)$. Indeed, as the multiplicative structure on $A$ is induced from that of an algebra in graded spectra (see section \ref{gradedthh}), the $p$-fold multiplication of summands in weight $m$ of  $A^{\wedge n}$ sends these to summands in weight $pn$ in $A^{\wedge pn}$. Hence we obtain, for each $m$, a morphism of cyclic objects 
$$
\varphi: B_\bullet(m) \to B_{\bullet}(pm)^{t C_p}
$$
Taking into account the compatibility map between the realization of the simplicial diagram of Tate fix points to the Tate fix points of the realization
$$
|B_{\bullet}(pm)^{tC_p}| \to B(pm)^{tC_p}, 
$$
we obtain the desired description of the restriction of the Frobenius \[
\varphi_m: B(m) \to B(pm)^{t C_p}.
\]
\end{proof}

\subsection{Compatibility with the pointed cyclic bar construction}
We also consider monoid objects in the $\infty$-category of based spaces $\mathcal{S}_*$. To these, one may apply a pointed version of the cyclic bar construction, as it appears in \cite[Section 3.1]{speirs2019ktheorycoordinate}. In the situations that arise in this paper, these pointed monoids will have an additional structure, that of a wedge decomposition 
$$
A \simeq \bigvee A(n)
$$
giving them the structure of a graded object in pointed spaces. Applying the cyclic bar construction to $A$ results in a graded cyclic object in pointed spaces. Let $b_\bullet$ denote the cyclic bar construction of $A$ and assume that $b_\bullet$ is proper. In this situation, the ``weight" $m$ part of $b_\bullet$ at cyclic degree $s$ is given by 
\[b_s(m) = \bigvee_{ n_1 +n_2+ \cdots +n_{s+1} = m} A(n_1) \wdg \cdots \wdg A(n_{s+1}).\]
We have the following compatibility result between the pointed cyclic bar construction of graded monoids in pointed topological spaces and the THH of graded monoids in the $\infty$-category of spectra. 

\begin{prop} \label{prop compatibility between realization in spectra and pointed simplicial sets}
Following the notation above, we have the following equivalence in the $\infty$-category of  $\T$-spectra.
\[\Sigma^\infty \lv b_\bullet(m) \rv \simeq  \widetilde{\thh}(\Sigma^\infty A)(m)  \]
Here, $\Sigma^\infty-$ denotes the infinite suspension functor from pointed spaces to the $\infty$-category of spectra and $\lv - \rv$ denotes the geometric realization functor in pointed topological spaces. 
\end{prop}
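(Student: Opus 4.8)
The plan is to realize both sides as geometric realizations of cyclic objects that agree termwise, the comparison being effected by the symmetric monoidal left adjoint $\Sigma^\infty$.

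First I would unwind the construction of $\widetilde{\thh}$ from Proposition \ref{splittingofthh}. Since $\Sigma^\infty$ is a symmetric monoidal left adjoint, it carries the graded pointed monoid $A \simeq \bigvee A(n)$ to the graded $E_1$-algebra $(\Sigma^\infty A(n))_{n \in \Z}$ in $\on{Sp}^{\Z}$ with $\colim_\Z \Sigma^\infty A(n) \simeq \Sigma^\infty A$, so $\widetilde{\thh}(\Sigma^\infty A)$ is defined. By construction it is the realization of the cyclic object in $\on{Sp}^{\Z}$ classified by $\Lambda^{\op} \to \assact \xrightarrow{(\Sigma^\infty A(n))^{\otimes}} \spactprod \to \on{Sp}^{\Z}$, where $\on{Sp}^{\Z}$ carries the Day convolution symmetric monoidal structure. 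Evaluated in cyclic degree $s$ this is the $(s+1)$-fold Day convolution power of $(\Sigma^\infty A(n))_n$, whose weight-$m$ component is, by the formula for Day convolution, $\bigvee_{n_1 + \cdots + n_{s+1} = m} \Sigma^\infty A(n_1) \wdg \cdots \wdg \Sigma^\infty A(n_{s+1})$. On the other hand, applying $\Sigma^\infty$ to the displayed formula $b_s(m) = \bigvee_{n_1 + \cdots + n_{s+1}=m} A(n_1) \wdg \cdots \wdg A(n_{s+1})$ and using that $\Sigma^\infty$ is symmetric monoidal, hence sends smashes of pointed spaces to smashes of spectra, and preserves coproducts, being a left adjoint, yields the same spectrum in weight $m$. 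Since all the cyclic structure maps of $b_\bullet$ (the Hochschild faces built from multiplication, the degeneracies built from the unit, and the cyclic operators) are built out of the graded monoid structure of $A$, and $\Sigma^\infty$ is symmetric monoidal, it carries the graded pointed cyclic bar construction of $A$ to the cyclic object in $\on{Sp}^{\Z}$ underlying $\widetilde{\thh}(\Sigma^\infty A)$; in weight $m$ this is an equivalence of cyclic spectra $\Sigma^\infty b_\bullet(m) \simeq \widetilde{\thh}(\Sigma^\infty A)_\bullet(m)$.

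Next I would pass to realizations. The $\infty$-categorical geometric realization is a colimit over $\Delta^{\op}$, and since $b_\bullet(m)$ is assumed proper, the strict topological realization $\lv b_\bullet(m) \rv$ computes this homotopy colimit. As $\Sigma^\infty$ and the evaluation functor $\on{ev}_m \co \on{Sp}^{\Z} \to \on{Sp}$ are left adjoints, they commute with these colimits, so
\[
\Sigma^\infty \lv b_\bullet(m) \rv \simeq \lv \Sigma^\infty b_\bullet(m) \rv \simeq \on{ev}_m \lv \widetilde{\thh}(\Sigma^\infty A)_\bullet \rv \simeq \widetilde{\thh}(\Sigma^\infty A)(m).
\]
Finally, for the $\T$-equivariance: the realization of a cyclic object in a cocomplete $\infty$-category carries a canonical $\T$-action, functorial in the cyclic object (cf. \cite[Proposition B.5]{nikolausscholze2018topologicalcyclic}), which on the level of spaces recovers the usual $\T$-action on the realization of a cyclic space used in \cite[Section 3.1]{speirs2019ktheorycoordinate}; since our identification takes place at the level of cyclic objects, the resulting equivalence of realizations is $\T$-equivariant.

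The main obstacle is the first step: matching the operadic Day convolution description of $\widetilde{\thh}$ coming out of Proposition \ref{splittingofthh} with the hands-on wedge decomposition $b_s(m) = \bigvee_{n_1 + \cdots + n_{s+1} = m} A(n_1) \wdg \cdots \wdg A(n_{s+1})$, and verifying that this identification is compatible with the faces, degeneracies, and cyclic operators simultaneously, and not merely with the underlying graded spectra in each simplicial degree. Once this levelwise-and-structural identification of cyclic objects is established, the interchange of $\Sigma^\infty$ with geometric realization via properness and the tracking of the $\T$-action are formal.
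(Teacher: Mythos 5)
Your proposal is correct and follows essentially the same route as the paper: identify $\Sigma^\infty b_\bullet(m)$ with the weight-$m$ cyclic object underlying $\widetilde{\thh}(\Sigma^\infty A)$ using that $\Sigma^\infty$ is symmetric monoidal, preserves coproducts and internal weight, then commute realization (via properness) and invoke functoriality of the $\T$-action on realizations of cyclic objects. The only cosmetic difference is that where you unwind the Day convolution powers by hand, the paper cites Hesselholt--Madsen (Theorem 7.1 of \cite{hesselholtwittvectors1997k}) for the ungraded equivalence $\thh(\Sigma^\infty A)\simeq \Sigma^\infty\lv b_\bullet\rv$ and then identifies the weight summands by inspection.
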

\begin{proof}
Let $A$ be an $E_1$-monoid in the $\infty$-category of graded pointed spaces, giving rise via infinite suspension to the graded $E_1$-algebra $\Sigma^\infty A$. Since $\Sigma^\infty(-)$ is compatible with the relevant symmetric monoidal structures and preserves coproducts, one obtains a decomposition
$$
\thh(\Sigma^\infty A) \simeq  \Sigma^\infty( \vee_m |b_\bullet(m)|) \simeq  \vee_m \Sigma^\infty |b_\bullet(m)|\simeq \vee_m |\Sigma^\infty b_\bullet(m)|,
$$
where the first equivalence follows for example from \cite[Theorem 7.1]{hesselholtwittvectors1997k}. 
By inspection, the cyclic objects $\Sigma^\infty b_\bullet(m)$ are precisely the weight $m$ cyclic summand realizing to 
$\widetilde{\thh}(\Sigma^\infty A)(m)$ in the decomposition of corollary \ref{cor grading in thh from a graded ring spectrum} since infinite suspension at the level of cyclic objects preserves internal weight $m$. 
\end{proof}

\section{Weight decomposition on topological Hochschild homology}

 For the rest of this section, let $X$ denote $\thh(k)$ for a perfect field  $k$  of characteristic $p$. We describe in more detail the  weight decomposition on $\thh(X)$ and describe its equivariant properties. This is analogous to the weight splitting used by Hesselholt and Madsen for the  Hochschild homology of quotients of discrete polynomial rings on a single generator \cite{hesselholt1997polytopesandtruncatedpolynomial}. 
 
 \subsection{A $\T$-equivariant splitting of $\thh(\thh(\fp))$}
We start by describing the weight decomposition for $\thh(X)$. Due to Theorem \ref{thm thhfp as an e2 algebra}, there is an equivalence of $E_2$-algebras. 
\[X \simeq Hk \wdg \sph[\Omega S^3].\]
This, together with \eqref{eq thh is symmetric monoidal} shows that we have the following splitting of  $E_1$-algebras in $\T$-spectra.

 \[\thh(X) \simeq \thh(k) \wdg \thh(\sph[\Omega S^3]) \]
 
 We now construct a weight decomposition on $\thh(\sph[\Omega S^3])$. For this, note that there is an equivalence of $E_1$-algebras (see for example \cite{may1978splitting})
  \begin{equation} \label{eq free e1 algebra on 2 sphere}
    \sph[\Omega S^3] \simeq \bigvee_{0 \leq m} \Sigma^{2m} \sph,
\end{equation}
making $\sph[\Omega S^3]$ into a graded spectrum where the degree $m$ part is given by $\Sigma^{2m} \sph$.  Here, $\Sigma^{2m}\sph$ denotes $(\Sigma^{2}\sph)^{\wdg m}$ for $m\geq1$, $\sph$ for $m=0$ and $\Sigma^2\sph$ denotes the double suspension of $\sph$. With these identifications, the product on the right hand side is given by the concatenation of the smash factors.

One may also obtain this graded spectrum from the monoid in graded pointed spaces given by 
\[\bigvee_{0 \leq m} \Sigma^{2m} S\]
via applying the infinite suspension functor. Here, $\Sigma^2S$ denotes the $2$-sphere as a pointed simplicial set and the rest of the cofactors are given by smash powers as before.

By Proposition \ref{splittingofthh}, $\thh(\sph[\Omega S^3])$ is canonically a graded object in $\T$-spectra. We analyze this splitting more closely. Let $B = \thh(\sph[\Omega S^3])$ and 
\[B(m) = \widetilde{\thh}(\sph[\Omega S^3])(m).\]
Recall that the standard complex realizing $B$ is  given by 
\[B_s = \sph[\Omega S^3]^{\wdg{s+1}}\]
at simplicial degree $s$. Therefore, we have 
\[B_s \cong \bigvee_{0 \leq m_1} \bigvee_{0 \leq m_2} \cdots \bigvee_{0 \leq m_{s+1}} \Sigma^{2m_1} \sph \wedge \Sigma^{2m_2} \sph \wdg \cdots \wdg \Sigma^{2m_{s+1}} \sph.\]
With the previous identifications,  $B(m)$ is given by the realization of the subcomplex $B_\bullet(m)$ consisting of the cofactors with   
\[m_1 + m_2 + \cdots +  m_{s+1} = m.\]

Therefore, we obtain the following splitting of $\thh(\sph[\Omega S^3])$ as a spectrum with  $\T$-action.
\[\thh(\sph[\Omega S^3]) \simeq \bigvee_{0\leq m}B(m)\]
 This gives the following splitting of spectra with $\T$-action.
\begin{equation}
 \label{eq thhx splits as a sum}
\thh(X) \simeq \thh(k) \wedge \bigvee_{0\leq m}B(m)
\end{equation}

\begin{rem}
We remark that splittings for $\thh(\sph[\Omega \Sigma U])$ for $U$ a  connected space have been previously obtained. For example, by Cohen in \cite{cohen1987model}, there is a decomposition
$$
\Sigma^{\infty}\mathcal{L} \Sigma U \simeq \bigvee_{n \geq 1} \Sigma^\infty(U^n \wedge_{C_n} \T_+)  
$$
It is not clear to the authors whether or not this splitting is $\T$-equivariant and therefore whether it coincides with the decomposition described above. 
\end{rem}
\subsection{Homology of the summands}
For our calculations, we need to understand the homology of $B$ and its summands. Note that we have 
\begin{equation*}
    \begin{split}
        H\Z \wdg B \simeq& H\Z \wdg \thh(\sph[\Omega S^3]) \\
                  \simeq& \hh(H\z \wdg \sph[\Omega S^3])      
    \end{split}
\end{equation*}

Due to equation \eqref{eq free e1 algebra on 2 sphere}, this is the Hochschild homology of the free DGA with a single generator in degree $2$. The $E^2$ page of the standard spectral sequence calculating these Hochschild homology groups is given by
\[E^2 \cong \tor_{*,*}^{\z[x_2]^e}(\z[x_2],\z[x_2])\]
where $\z[x_2]^e$ is the enveloping algebra $\z[x_2]\otimes \z[x_2]$ and the subscript denotes the internal degree \cite[\RomanNumeralCaps{9}.2.8]{elmendorf2007rings}. The standard argument is to use the automorphism of $\z[x_2]^e$ given by 
\[x_2 \otimes 1 \to x_2\otimes 1 - 1\otimes x_2 \textup{\ and \ } 1\otimes x_2 \to 1\otimes x_2.\]
Precomposing with this automorphism makes the action of the first factor of $\z[x_2]^e$ trivial and the action of the  second factor the canonical non-trivial action on $\z[x_2]$. We obtain 
\begin{equation*}
E^2 \cong \z[x_2] \otimes  \tor_{\z[x_2]}(\Z,\Z)\cong  \z[x_2] \otimes  \Lambda(\sigma x_2) 
\end{equation*}
where $\text{deg}(\sigma x_2) = (1,2)$. The differentials on this spectral sequence are trivial due to degree reasons. We obtain an isomorphism of abelian groups
\[H\z_* B \cong \Z[x_2] \otimes \Lambda(y_3)\]
where the $\lv y_3 \rv = 3$.

\begin{rem}
Alternatively, one may deduce the above by using the well-known identification 
$$
\thh(\sph[\Omega S^3]) \simeq  \Sigma^\infty_+ \mathcal{L} X
$$
togethere with the fact that $S^3$ is an $H$-space (it is in fact a Lie group) which gives a splitting of the canonical loop space fiber sequence 
$$
\Omega S^3 \to \mathcal{L} S^3 \to S^3
$$
Hence, $ \mathcal{L} S^3 \simeq \Omega S^3 \times S^3$ and so, there is an equivalence 
$$
H \Z[\mathcal{L}S^3] \simeq H \Z [\Omega S^3] \otimes H \Z[S^3]
$$
Upon taking homotopy, groups, and using the fact that the integral homology of $\Omega S^3$ is polynomial in degree $2$, one obtains the above isomorphism. 
\end{rem}

By inspection of the simplicial resolution defining this spectral sequence, $H\z_* B(m)$ is given by the contribution that comes from the internal degree $2m$ elements in the spectral sequence above. We obtain the following. 

\begin{lem} \label{lem Z homology of Bm}
The $H\Z$ homology of $B(m)$ is given by $\Z$ concentrated in degrees $2m$ and $2m+1$.
\end{lem}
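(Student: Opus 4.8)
The plan is to restrict the Hochschild homology spectral sequence computing $H\Z_* B$ to its weight-$m$ summand and read off the answer. By Corollary~\ref{cor grading in thh from a graded ring spectrum} and Proposition~\ref{prop compatibility between realization in spectra and pointed simplicial sets}, the summand $B(m)$ is the realization $\Sigma^\infty \lv b_\bullet(m)\rv$ of the weight-$m$ part of the cyclic bar construction of the graded pointed monoid $\bigvee_{0\le n}\Sigma^{2n}S$ coming from \eqref{eq free e1 algebra on 2 sphere}; explicitly $b_s(m)=\bigvee_{n_1+\cdots+n_{s+1}=m}\Sigma^{2n_1}S\wdg\cdots\wdg\Sigma^{2n_{s+1}}S$, with every wedge summand a copy of $\Sigma^{2m}S$. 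Since $H\Z\wdg(-)$ is symmetric monoidal and preserves coproducts, the spectral sequence used above to compute $H\Z_* B$ is the one associated to the simplicial chain complex $[s]\mapsto H\Z_* b_s$, and it splits as the direct sum over $m$ of the corresponding spectral sequences for the subcomplexes $H\Z_* b_\bullet(m)$, the $m$-th of which converges to $H\Z_* B(m)$.

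The next step is to see what this weight grading does on the $E^2$-page $\Z[x_2]\otimes\Lambda(\sigma x_2)$. Since the wedge summand $\Sigma^{2n}S$ of $\sph[\Omega S^3]$ sits in weight $n$ and in internal (``algebra'') degree $2n$, the weight of a homogeneous class equals exactly half its internal degree: both $x_2$ and $\sigma x_2$ carry internal degree $2$ and hence weight $1$, the Hochschild/bar direction contributing to the total topological degree but not to the weight. Here one uses that the change-of-variables automorphism $x_2\otimes1\mapsto x_2\otimes1-1\otimes x_2$ employed to identify the $E^2$-page is homogeneous for the internal grading, so ``internal degree $2m$'' is an intrinsic, weight-detecting condition. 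Consequently the weight-$m$ part of $E^2$ is spanned by the monomials $x_2^{a}(\sigma x_2)^{\epsilon}$ with $a+\epsilon=m$ and $\epsilon\in\{0,1\}$, namely $x_2^{m}$ in Hochschild degree $0$ and, when $m\ge1$, $x_2^{m-1}\sigma x_2$ in Hochschild degree $1$. All differentials vanish for degree reasons, as already observed, so $H\Z_* B(m)$ is the free $\Z$-module on these classes.

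Finally I would record the total degrees: in $H\Z_* B\cong\Z[x_2]\otimes\Lambda(y_3)$ the class $x_2^{m}$ lies in degree $2m$ and the class $x_2^{m-1}y_3$ (corresponding to $x_2^{m-1}\sigma x_2$) lies in degree $2(m-1)+3=2m+1$. Hence for $m\ge1$ one gets $H\Z_* B(m)\cong\Z$ concentrated in degrees $2m$ and $2m+1$, as claimed, while for $m=0$ one has $b_\bullet(0)\simeq S^0$, so $B(0)\simeq\sph$ contributes a single $\Z$ in degree $0$. The only real work is the three-fold bookkeeping of gradings — the Hochschild/bar degree, the internal degree inherited from the grading on $\sph[\Omega S^3]$, and their sum the total topological degree — together with the verification that the weight splitting of $\thh(\sph[\Omega S^3])$ is matched by the internal-degree splitting of the spectral sequence; everything else is immediate from the already-established computation $H\Z_* B\cong\Z[x_2]\otimes\Lambda(y_3)$.
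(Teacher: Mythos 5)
Your proposal is correct and follows essentially the same route as the paper: the paper computes $H\Z_*B\cong\Z[x_2]\otimes\Lambda(y_3)$ via the same Hochschild/Tor spectral sequence and then identifies $H\Z_*B(m)$ as the internal-degree-$2m$ contribution ``by inspection of the simplicial resolution,'' which is precisely the weight-versus-internal-degree bookkeeping you spell out. Your explicit handling of the $m=0$ summand (only a $\Z$ in degree $0$) is a harmless and in fact slightly more careful refinement of the statement.
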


We now turn our attention to $Hk_*B(m)$ and its multiplicative structure. This will in turn, be used to recover the multiplicative structure on the topological Hochschild homology groups of $\thh(k)$, see \eqref{eq thhthhfp ring structure}.

Before we proceed, we prove the following compatibility statement between $\thh(A)$ and $\hh^{k}(A \wedge Hk)$.  Surely this is well known, but we were unable to find a reference.  

\begin{prop}\label{Enalgebras}
Let $A$ be an $E_{n}$-ring spectrum. There is a natural equivalence of $E_{n-1}$-algebras: 
$$
\thh(A) \wedge Hk \simeq \hh^{k}(A \wedge Hk)
$$
in $\Mod_{Hk}$. 
\end{prop}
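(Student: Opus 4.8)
The plan is to reduce the statement to a base-change assertion for topological Hochschild homology and then identify the result with the $k$-linear Hochschild homology via a universal property. First I would recall that $\thh$ over the sphere is computed by the cyclic bar construction $B^{\mathrm{cyc}}_\bullet(A)$ with $n$-simplices $A^{\wedge(n+1)}$, and that $\thh(A)\simeq A\otimes_{A\otimes A^{\mathrm{op}}}A$ when one works relative to $\sph$. Smashing the cyclic bar construction levelwise with $Hk$ and using that $-\wedge Hk$ is symmetric monoidal and commutes with geometric realization (colimits), one gets
\[
\thh(A)\wedge Hk \;\simeq\; \lvert B^{\mathrm{cyc}}_\bullet(A)\rvert \wedge Hk \;\simeq\; \lvert (A\wedge Hk)^{\wedge_{Hk}(\bullet+1)}\rvert,
\]
where on the right the smash powers are now taken over $Hk$, because $(A^{\wedge(n+1)})\wedge Hk\simeq (A\wedge Hk)^{\wedge_{Hk}(n+1)}$ (one copy of $Hk$ suffices since $Hk\wedge_{Hk}Hk\simeq Hk$). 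This exhibits the right-hand side as $\thh^{Hk}(A\wedge Hk)$, the relative topological Hochschild homology of the $Hk$-algebra $A\wedge Hk$, i.e.\ the cyclic bar construction internal to $\Mod_{Hk}$.

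Next I would pass from $\thh^{Hk}$ to $\hh^k$. Since $Hk$ is the Eilenberg--MacLane spectrum of a field (in particular a discrete commutative ring), $\Mod_{Hk}$ is equivalent, as a symmetric monoidal $\infty$-category, to the derived category $\mathcal D(k)$ of the ordinary ring $k$, and under this equivalence the cyclic bar construction of an $Hk$-algebra goes to the cyclic bar construction of the corresponding $k$-DGA, which is precisely what computes $\hh^k$. Hence $\thh^{Hk}(A\wedge Hk)\simeq \hh^k(A\wedge Hk)$, and composing with the previous identification gives the desired equivalence $\thh(A)\wedge Hk\simeq \hh^k(A\wedge Hk)$.

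For the algebra structure and naturality: the cyclic bar construction is a lax symmetric monoidal functor from $E_1$-algebras to spectra with $\T$-action, and more is true — by the work recalled in the excerpt, $\thh$ of an $E_n$-algebra is naturally an $E_{n-1}$-algebra (the relevant statement in $\Mod_{Hk}$ being that $\hh^k$ of an $E_n$ $k$-algebra is $E_{n-1}$). All the maps above — levelwise base change, the monoidality of $-\wedge Hk$, commutation with realization, and the equivalence $\Mod_{Hk}\simeq\mathcal D(k)$ — are maps of $E_{n-1}$-algebras (indeed symmetric monoidal at the relevant level), so the composite equivalence is one of $E_{n-1}$-algebras and is natural in $A$.

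The main obstacle I anticipate is being careful about which ground ring the smash products are taken over and making the identification $(A^{\wedge(n+1)})\wedge Hk\simeq(A\wedge Hk)^{\wedge_{Hk}(n+1)}$ compatibly with the cyclic (or at least simplicial) structure maps, including the extra degeneracies encoding the $\T$-action — i.e.\ checking that the equivalence is $\T$-equivariant, not merely an equivalence of underlying spectra. This is where a reference such as \cite[Theorem 7.1]{hesselholtwittvectors1997k} or the general base-change formula $\thh(A)\wedge_{\thh(\sph)}\thh(\sph)\simeq\cdots$ for $\thh$ should be invoked; granting that, the remaining steps are formal.
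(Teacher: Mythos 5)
Your proof is correct and takes essentially the same approach as the paper: smash the cyclic bar construction levelwise with $Hk$, identify the result as the $Hk$-relative cyclic bar construction on $A\wedge Hk$, and commute $-\wedge Hk$ past geometric realization. The one place you are brief --- asserting that all the exchange maps are maps of $E_{n-1}$-algebras --- is precisely where the paper isolates a lemma, showing that for a symmetric monoidal left adjoint $F$ the natural equivalence $\colim_{\Delta^{op}}\circ F \simeq F\circ\colim_{\Delta^{op}}$ is a morphism of \emph{symmetric monoidal} functors (by passing to commutative algebra objects in $\on{Fun}(\on{Sp}^{\Delta^{op}},\Mod_{Hk})$ under Day convolution and using that sifted colimits of algebras are computed on underlying objects), so that it respects the $E_{n-1}$-structure that the cyclic bar construction carries as a simplicial object in $E_{n-1}$-algebras; your detour through $\Mod_{Hk}\simeq \mathcal{D}(k)$ and your concern about $\T$-equivariance are harmless but not needed, as the statement only asserts an $E_{n-1}$-equivalence in $\Mod_{Hk}$.
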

\noindent 
We shall need the following lemma:
\begin{lem}
Let $F$ be a symmetric monoidal left adjoint functor $ \on{Sp} \to \Mod_{Hk}$. There is an induced natural equivalence of symmetric monoidal functors 
$$
\colim_{\Delta^{op}} \circ F \to F \circ \colim_{\Delta^{op}}: \on{Sp}^{\Delta^{op}}  \to \Mod_{Hk}
$$
\end{lem}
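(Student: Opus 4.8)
The plan is to deduce this from the general fact that a left adjoint preserves colimits, combined with the observation that the symmetric monoidal structure on the functor of a simplicial diagram interacts correctly with realization. First I would recall that for any simplicial object $M_\bullet \in \on{Sp}^{\Delta^{op}}$, the realization $\colim_{\Delta^{op}} M_\bullet$ is computed as a colimit in $\on{Sp}$, and since $F$ is a left adjoint it commutes with all colimits; in particular there is a natural equivalence
\[
F(\colim_{\Delta^{op}} M_\bullet) \simeq \colim_{\Delta^{op}} (F \circ M_\bullet)
\]
for every $M_\bullet$. This is just the universal property of the colimit together with the adjunction, so the underlying equivalence of functors $\on{Sp}^{\Delta^{op}} \to \Mod_{Hk}$ is immediate. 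The content of the lemma, beyond this, is that this equivalence can be promoted to one of \emph{symmetric monoidal} functors — i.e. it is compatible with the lax (in fact strong, since both sides are symmetric monoidal left adjoints) symmetric monoidal structures on $\colim_{\Delta^{op}}$ and on $F$.

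For the symmetric monoidal refinement, the cleanest route is to work with the $\infty$-operadic formalism: $\colim_{\Delta^{op}}: \on{Sp}^{\Delta^{op}} \to \on{Sp}$ is symmetric monoidal (with $\on{Sp}^{\Delta^{op}}$ given the pointwise monoidal structure) because $\Delta^{op}$ is sifted, and likewise $\on{Sp}^{\Delta^{op}} \to \Mod_{Hk}^{\Delta^{op}} \to \Mod_{Hk}$ is a composite of symmetric monoidal functors. Both composites in the statement are then symmetric monoidal functors $\on{Sp}^{\Delta^{op}} \to \Mod_{Hk}$, and I would identify them as follows: each is the essentially unique symmetric monoidal colimit-preserving functor extending the symmetric monoidal functor $F: \on{Sp} \to \Mod_{Hk}$ along $\on{Sp} \hookrightarrow \on{Sp}^{\Delta^{op}}$ (the constant-diagram inclusion), using that $\on{Sp}^{\Delta^{op}}$ is freely generated under sifted colimits — or more precisely under $\Delta^{op}$-indexed colimits — from $\on{Sp}$ as a symmetric monoidal $\infty$-category. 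Concretely one invokes the symmetric monoidal universal property of presheaf/simplicial-object categories (cf.\ the relevant statements in \cite[Chapter 4]{lurie2016higher} on Day convolution and free cocompletions) to see that a symmetric monoidal left adjoint out of $\on{Sp}^{\Delta^{op}}$ is determined by its restriction along the Yoneda-type embedding, up to contractible ambiguity; since both $\colim \circ F$ and $F \circ \colim$ restrict to $F$ on constant diagrams and both preserve $\Delta^{op}$-colimits, they agree as symmetric monoidal functors.

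The main obstacle I expect is purely bookkeeping in the $\infty$-categorical coherence: making precise that both sides are symmetric monoidal left adjoints (as opposed to merely lax monoidal) and that the universal property being invoked is the symmetric monoidal one, not just the underlying one. In particular one must be a little careful that $\colim_{\Delta^{op}}$ is \emph{strong} symmetric monoidal — this uses siftedness of $\Delta^{op}$ so that the relevant comparison maps $\colim M_\bullet \otimes \colim N_\bullet \to \colim(M_\bullet \otimes N_\bullet)$ are equivalences — and similarly that $F$ being a symmetric monoidal left adjoint means the induced functor on simplicial objects is again a symmetric monoidal left adjoint. Once these points are in place, the equivalence of symmetric monoidal functors follows formally, and no genuine computation is required.
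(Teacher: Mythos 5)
Your first step---the underlying, non-monoidal equivalence $F(\colim_{\Delta^{op}}X_\bullet)\simeq \colim_{\Delta^{op}}F(X_\bullet)$ coming from the fact that a left adjoint preserves colimits---is correct, and it is also where the paper's proof starts. The gap is in your symmetric monoidal refinement. You propose to identify $\colim_{\Delta^{op}}\circ F$ and $F\circ\colim_{\Delta^{op}}$ by a uniqueness principle: that a symmetric monoidal colimit-preserving functor out of $\on{Sp}^{\Delta^{op}}$ is determined by its restriction along the constant-diagram inclusion $\on{Sp}\to\on{Sp}^{\Delta^{op}}$, on the grounds that $\on{Sp}^{\Delta^{op}}$ is ``freely generated under $\Delta^{op}$-colimits'' from $\on{Sp}$. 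That universal property is false. The constant-diagram inclusion is not a Yoneda-type embedding: colimits in $\on{Sp}^{\Delta^{op}}$ are computed levelwise, so any colimit of constant simplicial objects is again constant; hence the constants generate only themselves under colimits and no functor out of $\on{Sp}^{\Delta^{op}}$ is pinned down by its values on them. Concretely, evaluation at $[0]$, $ev_0\colon\on{Sp}^{\Delta^{op}}\to\on{Sp}$, is a symmetric monoidal (for the pointwise monoidal structure) colimit-preserving functor that agrees with $\colim_{\Delta^{op}}$ on constant diagrams (both restrict to the identity, using weak contractibility of $\Delta^{op}$), yet $ev_0\not\simeq\colim_{\Delta^{op}}$. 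So ``both restrict to $F$ on constants and preserve $\Delta^{op}$-colimits'' does not force the two composites to agree, let alone as symmetric monoidal functors. The genuine free-cocompletion statement for $\on{Fun}(\Delta^{op},\on{Sp})\simeq\mathcal{P}(\Delta)\otimes\on{Sp}$ has the representables (tensored with spectra), not the constants, as generators, and those are not closed under the pointwise tensor product, so that route does not go through formally either.

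What the paper does instead is to work with the specific comparison map $\colim_{\Delta^{op}}\circ F\to F\circ\colim_{\Delta^{op}}$ (an equivalence because $F$ preserves colimits) and prove directly that it is multiplicative: lax symmetric monoidal functors $\on{Sp}^{\Delta^{op}}\to\Mod_{Hk}$ are identified with commutative algebra objects of $\on{Fun}(\on{Sp}^{\Delta^{op}},\Mod_{Hk})$ for the Day convolution structure, the comparison map is exhibited as the colimit of the maps $F(ev_n(-))\to F(\colim_{\Delta^{op}}(-))$, each of which is a map of commutative algebra objects, and since sifted colimits of commutative algebras are computed in the underlying category, the comparison map is itself a map of algebras, i.e.\ a symmetric monoidal natural transformation (strong, by the equivalence already established). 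If you want to salvage your outline, you must replace the false uniqueness step by an argument of this kind that constructs and controls the actual comparison map rather than appealing to a universal property of $\on{Sp}^{\Delta^{op}}$ relative to constant diagrams.
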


\begin{proof}
The map realizing this equivalence is the standard natural transformation
\begin{equation} \label{colimitexchangen}
   \colim_{\Delta^{op}} F\circ (X_\bullet) \to F \circ \colim_{\Delta^{op}}(X_\bullet), 
\end{equation}
of functors $\on{Sp}^{\Delta^{op}} \to \on{Sp}$
induced by the universal property of the colimit. Since $F$ commutes with colimits, this is an equivalence.  We now show that this is a multiplicative natural transformation. To do this we interpret it as a  map in the $\infty$-category of functors $\on{Fun}(\on{Sp}^{\Delta^{op}}, \on{Sp})$. Recall that the category of lax symmetric monoidal functors is identified with the category
$$
\on{CAlg}(\on{Fun}(\on{Sp}^{\Delta^{op}}, \Mod_{Hk}))
$$
of commutative algebra objects with respect to the Day convolution monoidal structure on $\on{Fun}(\on{Sp}^{\Delta^{op}}, \Mod_{Hk})$. Hence the problem reduces to displaying (\ref{colimitexchangen}) as a map of commutative algebra objects in this category. For $X_\bullet \in \on{Sp}^{\Delta^{op}}$, this is a colimit of maps
$$
F(ev_{n}(X_\bullet)) \to F(\colim_{\Delta^{op}}X_\bullet),
$$
each of which is a  map of commutative algebra objects in $\on{Fun}(\on{Sp}^{\Delta^{op}}, \Mod_k)$.
Since sifted colimits of (commutative) algebra objects are detected in the underlying $\infty$-category (see \cite[Corollary 3.2.3.2]{lurie2016higher}) and since $F$ is itself symmetric monoidal, we may identify (\ref{colimitexchangen}) as the map induced by the universal property of the colimit in 
$
\on{CAlg}(\on{Fun}(\on{Sp}^{\Delta^{op}}, \Mod_{Hk})).
$
Hence, this is a map of commutative algebra objects. Note that $F \circ \colim_{\Delta^{op}}$ is symmetric monoidal because the smash product in spectra commutes with geometric realizations of simplicial objects. We may now conclude that the map  (\ref{colimitexchangen}) is a morphism of symmetric monoidal functors, and in fact an equivalence. 
\end{proof}

Furthermore, both $\thh(-)$ and $\hh^k(-)$, being symmetric monoidal, send $E_n$-algebras to $E_{n-1}$-algebras. Hence we obtain the following:

\begin{proof}[Proof of Proposition \ref{Enalgebras}]

Let $B^R_{cyc}(-)_{\bullet}$ denote the simplicial object whose realization is $\thh^R(-)$ for a commutative ring spectrum $R$ which is the sphere spectrum when it is omitted. 
If $A$ is an $E_n$-algebra in spectra, then  $B_{cyc}^{R}(A \wedge Hk)_{\bullet}$ will be an $E_{n-1}$-algebra in $\on{Sp}^{\Delta^{op}}$. To see this, recall that an $E_n$ algebra is the same thing as an $E_{1}$-algebra in the symmetric monoidal  $\infty$-category $\on{Alg}_{E_{n-1}}(\on{Sp})$. Hence the cyclic bar construction $B^R_{cyc}(A)_\bullet$ may be viewed as a simplicial object in $E_{n-1}$ algebras; given the pointwise symmetric monoidal structure on $\on{Sp}^{\Delta^{op}}$, this is the same thing as an $E_{n-1}$ algebra in this category. We now apply the previous lemma with $F = - \wedge Hk$ to conclude that 
$$
\rho: |B_{cyc}^{Hk}(A \wedge Hk)_{\bullet}| \xrightarrow{\simeq} |B_{cyc}(A)_{\bullet}| \wedge Hk
$$
is a natural equivalence of $E_{n-1}$-algebras. 
\end{proof}
Applying the above discussion to our current setting, we have an equivalence of $E_1$-algebras.
\[Hk\wdg \thh(\sph [\Omega S^3]) \simeq \hh^k(Hk \wdg \sph[\Omega S^3])\]
Arguing as before, we obtain that the second page of the spectral sequence calculating these Hochschild homology groups is given by 
\[E^2 \cong k[x_2] \otimes \Lambda(\sigma x_2).\]
The differentials are again trivial by degree reasons. We obtain that 
\begin{equation}\label{eq equation above 2}
Hk_* B \cong \hh^k_*(Hk \wdg \sph[\Omega S^3]) \cong k[x_2] \otimes \Lambda(y_3).
\end{equation}
We claim that this is a ring isomorphism. By Theorem \ref{thm e2 dgas with polynomial homology}, we know that $Hk \wdg \sph[\Omega S^3]$ is equivalent to the $E_2$ $Hk$-algebra corresponding to the formal $E_\infty$ $k$-DGA with homology $k[x_2]$. Therefore we can consider $Hk \wdg \sph[\Omega S^3]$ as an $E_\infty$ $Hk$-algebra. 

This equips the spectral sequence above with a multiplicative structure that gives the multiplicative structure on the target. The only multiplicative extension problem is resolved once we show that $y_3^2=0$. This follows by the fact that the target is graded commutative. We obtain the following lemma.

\begin{lem} \label{lem k homology of bm}
The $Hk$ homology of $B(m)$ is given by $k$ concentrated in degrees $2m$ and $2m+1$.
\end{lem}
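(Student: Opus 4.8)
The plan is to run, over $k$, the same weight-extraction argument used over $\Z$ in Lemma \ref{lem Z homology of Bm}. Recall that $B(m) = \widetilde{\thh}(\sph[\Omega S^3])(m)$ is the weight-$m$ summand of $\thh(\sph[\Omega S^3])$ supplied by Corollary \ref{cor grading in thh from a graded ring spectrum} for the grading $\sph[\Omega S^3] \simeq \bigvee_{m \geq 0} \Sigma^{2m}\sph$. Since $Hk \wdg -$ preserves wedge decompositions, the equivalence $Hk \wdg \thh(\sph[\Omega S^3]) \simeq \hh^k(Hk \wdg \sph[\Omega S^3])$ of Proposition \ref{Enalgebras} carries this weight decomposition to the weight decomposition visible on the bar resolution of the right-hand side; consequently $Hk_* B(m)$ is exactly the weight-$m$ part of $Hk_* B \cong k[x_2] \otimes \Lambda(y_3)$ from \eqref{eq equation above 2}.

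So the first step is to pin down the weights of the two generators. The class $x_2$ is the image of the generator of $\pi_2(Hk \wdg \sph[\Omega S^3])$ lying in the weight-one summand $\Sigma^2\sph$, pushed forward by the unit inclusion of zero-simplices into the cyclic bar construction, which is weight-preserving; hence $x_2$ has weight $1$. The class $y_3 = \sigma x_2$ is the Hochschild (bar) suspension of $x_2$: since the weight grading on $\thh$ of a graded algebra is additive over the smash factors of the cyclic bar construction (Proposition \ref{splittingofthh}), and $\sigma x_2$ is supported on the bar-degree-one term $A \wdg A$ with $x_2$ in one smash factor and the unit (weight $0$) in the other, $y_3$ also has weight $1$. (Concretely, in the spectral sequence of \eqref{eq equation above 2} it is the generator of $\textup{Tor}_1^{k[x_2] \otimes k[x_2]}(k[x_2], k[x_2])$ coming from the Koszul class of $x_2 \otimes 1 - 1 \otimes x_2$, which lies in weight $1$.) Granting this, the weight-$m$ part of $k[x_2] \otimes \Lambda(y_3)$ is spanned by $x_2^m$ in topological degree $2m$ together with $x_2^{m-1} y_3$ in topological degree $2(m-1) + 3 = 2m+1$ (the latter present once $m \geq 1$; for $m = 0$ one has $B(0) \simeq \sph$), and $y_3^2 = 0$ leaves nothing else. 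This is precisely the claimed description.

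A second and more economical route avoids all of this weight bookkeeping. By Lemma \ref{lem Z homology of Bm}, $H\Z_* B(m)$ is $\Z$-free: a single $\Z$ in each of degrees $2m$ and $2m+1$. Hence in the universal coefficient spectral sequence for $Hk \wdg B(m) \simeq Hk \wdg_{H\Z} (H\Z \wdg B(m))$ all $\textup{Tor}$ terms vanish, and $Hk_* B(m) \cong H\Z_* B(m) \otimes_\Z k$, which is $k$ in degrees $2m$ and $2m+1$. I expect the only real care required in the first route is keeping the internal (topological) degree of the Hochschild spectral sequence of \eqref{eq equation above 2} --- in which $\sigma x_2$ sits in degree $3$ --- cleanly separated from the weight grading --- in which $x_2$ and $\sigma x_2$ both sit in weight $1$ --- and checking that these are genuinely independent gradings respected by the equivalences of Propositions \ref{splittingofthh} and \ref{Enalgebras}; this is routine bookkeeping, and the universal-coefficients argument sidesteps it entirely.
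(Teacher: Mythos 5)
Your first route is the paper's argument, just with the weight bookkeeping made explicit rather than summarized with ``by inspection of the simplicial resolution.'' The paper computes $Hk_*B$ via the Hochschild spectral sequence for $\hh^k(Hk\wdg\sph[\Omega S^3])$, obtains $k[x_2]\otimes\Lambda(y_3)$, and then reads off the weight-$m$ part exactly as you do (the implicit identification being that weight $m$ corresponds to internal degree $2m$, i.e.\ $m$ copies of $x_2$). Your verification that $x_2$ and $\sigma x_2$ both carry weight $1$, and the observation that this forces the weight-$m$ part to be $\{x_2^m, x_2^{m-1}y_3\}$, is precisely what is being appealed to.

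Your second route --- base change from Lemma \ref{lem Z homology of Bm} --- is correct and is a genuine simplification not taken by the paper. Since $H\Z_*B(m)$ is free over $\Z$, the Künneth/universal-coefficient spectral sequence for $Hk\wdg B(m)\simeq Hk\wdg_{H\Z}(H\Z\wdg B(m))$ degenerates, giving $Hk_*B(m)\cong H\Z_*B(m)\otimes_\Z k$ immediately, with no spectral sequence or weight bookkeeping at all. The reason the paper does not go this way is that it needs more than the lemma states: the computation \eqref{eq thhthhfp ring structure} requires the ring structure of $Hk_*B$, and the surrounding paragraph (the appeal to Theorem \ref{thm e2 dgas with polynomial homology} to resolve the multiplicative extension $y_3^2=0$) is doing that work. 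The universal-coefficients argument gives the additive answer, which is all the lemma asks for, so it is the more economical proof of the stated claim; but it does not replace the paper's multiplicative analysis elsewhere. One small point both you and the paper glide over: the statement is really intended for $m\geq 1$; for $m=0$ one has $B(0)\simeq\sph$ and $Hk_*B(0)=k$ in degree $0$ only, which you correctly flag.
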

\subsection{Frobenius morphism on the weight decomposition}
In this section, we prove key lemmas regarding the Frobenius map on $B$. Here, $B$ denotes $\thh(\sph[\Omega S^3])$ as before. By the formalism of Section \ref{gradedthh}, the Frobenius structure map 
$$
\varphi: B \to B^{t C_p}
$$
decomposes, for every $m$, into maps
$$
\varphi_m : B(m) \to  B(pm)^{tC_p}.
$$

\begin{lem}\label{lem frobenius are pcompletions}
The maps $\varphi_m$ above are $p$-completions for every $0\leq m$. Furthermore, $B(m)^{tC_p}$ is contractible as a spectrum whenever $p \nmid m$.
\end{lem}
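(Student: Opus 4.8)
The plan is to run the whole argument on pointed spaces, via the identification $B(m) \simeq \Sigma^{\infty}\lv b_\bullet(m)\rv$ of Proposition~\ref{prop compatibility between realization in spectra and pointed simplicial sets}, where $b_\bullet(m)$ is the weight-$m$ summand of the pointed cyclic bar construction of the free pointed monoid $\Pi = \bigvee_{j \geq 0}(S^2)^{\wdg j}$ coming from \eqref{eq free e1 algebra on 2 sphere}. First I would record two facts about the cyclic $\T$-space $\lv b_\bullet(m)\rv$. \emph{(A) Finiteness:} a composition $m = n_0 + \cdots + n_s$ into $s+1$ \emph{positive} parts forces $s \leq m-1$, so $b_\bullet(m)$ has only finitely many nondegenerate simplices, each a copy of $(S^2)^{\wdg m}$; hence (using properness) $\lv b_\bullet(m)\rv$ is a finite $\T$-CW complex and $B(m)$ is a finite spectrum. \emph{(B) Fixed points and the Frobenius:} the standard diagonal/edgewise-subdivision description of the cyclotomic structure on $\thh$ of a graded pointed monoid (as in \cite{hesselholt1997polytopesandtruncatedpolynomial, speirs2019ktheorycoordinate}) gives $\lv b_\bullet(n)\rv^{C_p} \cong \lv b_\bullet(n/p)\rv$ when $p \mid n$ and $\lv b_\bullet(n)\rv^{C_p} = \{\ast\}$ when $p \nmid n$; moreover, combining Proposition~\ref{lem frobenius also split} with Proposition~\ref{prop compatibility between realization in spectra and pointed simplicial sets}, the Frobenius $\varphi_m$ is identified with the composite $\Sigma^{\infty}\lv b_\bullet(m)\rv \xrightarrow{\sim} \Sigma^{\infty}\bigl(\lv b_\bullet(pm)\rv^{C_p}\bigr) \to \bigl(\Sigma^{\infty}\lv b_\bullet(pm)\rv\bigr)^{tC_p}$, the second arrow being the canonical map $\Sigma^{\infty}(Z^{C_p}) \to (\Sigma^{\infty}Z)^{tC_p}$ attached to a pointed $C_p$-CW space $Z$. (For concreteness one has $\lv b_\bullet(m)\rv \cong (S^2)^{\wdg m}\wdg_{C_m}\T_+$ for $m \geq 1$ and $S^0$ for $m = 0$, as in the analyses of Hesselholt--Madsen and Speirs, but only (A) and (B) are used below.)

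For the contractibility statement, suppose $p \nmid m$. By (B), $\lv b_\bullet(m)\rv^{C_p} = \{\ast\}$, i.e.\ $C_p$ acts freely on $\lv b_\bullet(m)\rv$ away from the basepoint, so $\Sigma^{\infty}\lv b_\bullet(m)\rv$ is an induced (free) $C_p$-spectrum. For such a spectrum the norm map $(\,\cdot\,)_{hC_p} \to (\,\cdot\,)^{hC_p}$ is an equivalence, and hence $B(m)^{tC_p} = \cone\bigl(B(m)_{hC_p} \to B(m)^{hC_p}\bigr) \simeq 0$.

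For the claim that each $\varphi_m$ with $m \geq 0$ is a $p$-completion, I would first use the standard fact that the Tate construction of a suspension spectrum depends only on its fixed points, namely $(\Sigma^{\infty}Z)^{tC_p} \simeq (\Sigma^{\infty}Z^{C_p})^{tC_p}$ with $C_p$ acting trivially on the right-hand side (this is the $\widetilde{EC_p}$-localization argument underlying the Segal conjecture, see \cite{nikolausscholze2018topologicalcyclic}). Taking $Z = \lv b_\bullet(pm)\rv$ and applying (B) gives $B(pm)^{tC_p} \simeq (\Sigma^{\infty}\lv b_\bullet(m)\rv)^{tC_p}$ with trivial $C_p$-action. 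Since $\lv b_\bullet(m)\rv$ is a finite complex by (A), the Tate construction commutes with smashing by it, so $(\Sigma^{\infty}\lv b_\bullet(m)\rv)^{tC_p} \simeq \Sigma^{\infty}\lv b_\bullet(m)\rv \wdg \sph^{tC_p}$, and the Segal conjecture for the cyclic group $C_p$ (Lin's theorem) identifies $\sph \to \sph^{tC_p}$ with the $p$-completion $\sph \to \sph^{\wedge}_{p}$. Smashing with the finite spectrum $B(m) = \Sigma^{\infty}\lv b_\bullet(m)\rv$ and unwinding the description of $\varphi_m$ from (B) then shows that $\varphi_m \colon B(m) \to B(pm)^{tC_p} \simeq B(m)^{\wedge}_{p}$ is precisely the $p$-completion map. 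For $m = 0$ this is Lin's theorem itself, applied to $B(0) = \sph$.

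I expect the genuine obstacle to be the bookkeeping in (B): one must match the Nikolaus--Scholze weight-graded Frobenius of Proposition~\ref{lem frobenius also split} with the classical diagonal/edgewise-subdivision cyclotomic structure on the cyclic bar construction of $\Pi$, and track this identification through Proposition~\ref{prop compatibility between realization in spectra and pointed simplicial sets} and the weight decomposition, so as to know that $\varphi_m$ is literally the displayed composite $\Sigma^{\infty}(\lv b_\bullet(pm)\rv^{C_p}) \to (\Sigma^{\infty}\lv b_\bullet(pm)\rv)^{tC_p}$ and that the homeomorphism $\lv b_\bullet(pm)\rv^{C_p}\cong \lv b_\bullet(m)\rv$ is the right one. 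Everything else is formal: the key extra input over the $\thh(\fp)$ case is that here the weight pieces $B(m)$ are \emph{finite} spectra, which lets the entire statement be deduced from the Segal conjecture for $C_p$, with no appeal to B\"okstedt periodicity.
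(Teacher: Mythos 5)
Your route is genuinely different from the paper's, and much of it is sound: the finiteness of each weight piece is the paper's own Lemma \ref{lem bm is a finite cp space}; the vanishing of $B(m)^{tC_p}$ for $p \nmid m$ via freeness of the $C_p$-action away from the basepoint is correct and arguably more direct than the paper's argument; and the chain $(\Sigma^\infty Z)^{tC_p} \simeq (\Sigma^\infty Z^{C_p})^{tC_p} \simeq \Sigma^\infty Z^{C_p} \wedge \sph^{tC_p}$ for a finite $C_p$-CW complex $Z$, combined with Lin's theorem, does identify $B(pm)^{tC_p}$ with the $p$-completion of $B(m)$ \emph{abstractly}. What your approach buys is that everything is reduced to the Segal conjecture for $C_p$ plus elementary equivariant cell arguments on the explicit finite models $\lv b_\bullet(m)\rv$, with no appeal to the global statement about free loop spaces.

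However, the step you yourself flag in (B) is a genuine gap rather than bookkeeping, and it is the heart of the lemma: knowing $B(pm)^{tC_p} \simeq B(m)^{\wedge}_p$ does not show that $\varphi_m$ \emph{is} the $p$-completion map unless you identify $\varphi_m$ with the composite through $\Sigma^\infty\bigl(\lv b_\bullet(pm)\rv^{C_p}\bigr)$. That identification does not follow from combining Proposition \ref{lem frobenius also split} with Proposition \ref{prop compatibility between realization in spectra and pointed simplicial sets}: the former only says that the Tate-diagonal Frobenius restricts to the weight summands, and the latter only identifies the underlying $\T$-spectra of those summands; neither compares the Nikolaus--Scholze Frobenius with the classical geometric-fixed-point/subdivision map $\lv sd_p b_\bullet(pm)\rv^{C_p} \cong \lv b_\bullet(m)\rv$. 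Supplying this requires the comparison of the Tate-diagonal cyclotomic structure on $\thh(\sph[\Omega S^3]) \simeq \Sigma^\infty_+ \mathcal{L}S^3$ with the classical one of \cite{bokstedt1993cyclotomictraceandalgebraicktheoryofspaces}, refined so as to respect the weight decomposition --- essentially a weight-graded version of \cite[Section IV.3]{nikolausscholze2018topologicalcyclic}, which would need a levelwise identification of the Tate diagonal on suspension spectra with the map induced by the diagonal of spaces, followed by the realization/Tate interchange as in the proof of Proposition \ref{lem frobenius also split}. The paper sidesteps this entirely: it quotes \cite[Theorem IV.3.7]{nikolausscholze2018topologicalcyclic} to know that the \emph{total} Frobenius $\varphi \co B \to B^{tC_p}$ is a $p$-completion, converts the wedge decomposition into a product using the connectivity estimate from Lemma \ref{lem Z homology of Bm}, and then smashes with the dualizable spectrum $\sph/p$ to read off, componentwise via Proposition \ref{lem frobenius also split}, that each $\varphi_m$ is a $\sph/p$-equivalence into a $p$-complete target and that the weight-prime-to-$p$ components vanish. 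So either carry out the comparison in (B) in detail, or replace it by the paper's global-to-local argument; as written, the crux of your proposal is asserted rather than proved.
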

\begin{proof}
First, we need to establish the behaviour of the weight decomposition with respect to the Tate construction. For this, we claim that the coproduct defining the weight splitting is indeed a product, i.e.\ the canonical map 
\[B \simeq \bigvee_{0 \leq m} B(m) \to \prod_{0 \leq m}B(m)\]
is an equivalence. Each $B(m)$ is connective by construction. This, together with Lemma \ref{lem Z homology of Bm} shows that each $B(m)$ is indeed $2m-1$ connected. Therefore at each homotopy group, there is contribution from finitely factors for the product and finitely many cofactors for the coproduct. This shows that the map above is a homotopy isomorphism.

The homotopy orbits functor preserves connectivity and coproducts and therefore also commutes with this product. The fixed points functor is a limit and therefore it also commutes with products. Therefore, 
\[B^{tCp} = \prod_{0\leq m}B(m)^{tCp}.\]
By Lemma \ref{lem frobenius also split}, the Frobenius  $\varphi \co B \to B^{tCp}$ is given by 
\[\prod_{0 \leq m} \varphi_m  \co \prod_{0 \leq m}B(m) \to \prod_{0\leq m}B(m)^{tCp}.\]

First, note that $B(m)^{tC_p}$ is $p$-complete due to  \cite[\RomanNumeralCaps{1}.2.9]{nikolausscholze2018topologicalcyclic}. Therefore, it is sufficient to show that each map $\varphi_m$ is a $\sph/p$-local equivalence where $\sph/p$ denotes the Moore spectrum of $\z/p$.  Since $B = \Sigma_+^\infty \mathcal{L}S^3$, it follows from  Theorem \RomanNumeralCaps{4}.3.7 of  \cite{nikolausscholze2018topologicalcyclic} that the Frobenius $\varphi$ on $B$ is a $p$-completion. Therefore, it is a homotopy isomorphism after applying the functor $\sph/p \wdg -$. Since $\sph/p$ is dualizable, this functor preserves all homotopy limits as well as all homotopy colimits. We deduce that the map
\[\prod_{0 \leq m} \pi_*(\sph/p \wdg  \varphi_m ) \co \prod_{0 \leq m} \pi_*(\sph/p \wdg B(m)) \to \prod_{0\leq m} \pi_*(\sph/p \wdg B(m)^{tCp})\]
is an isomorphism. Therefore, it is an isomorphism at each component and we have that $B(m)^{tC_p}$ is $\sph/p$-locally contractible whenever $p \nmid m$. Since $B(m)^{tC_p}$ is already $\sph/p$-local, i.e.\ $p$-complete, we  deduce that  $B(m)^{tC_p}$ is contractible as a spectrum whenever $p \nmid m$.
\end{proof}

\begin{lem}\label{lem bm is a finite cp space}
As a spectrum with $C_p$-action, $ B(m)$  is the suspension spectrum of a finite $C_p$-CW space.  
\end{lem}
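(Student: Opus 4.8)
The plan is to realize $B(m)$ as the suspension spectrum of the geometric realization of an explicit cyclic pointed space and then use the $p$-fold edgewise subdivision to make the $C_p$-action manifest. Applying Proposition \ref{prop compatibility between realization in spectra and pointed simplicial sets} to the graded pointed monoid $A = \bigvee_{j\ge 0}\Sigma^{2j}S$ (so $A(j) = (\Sigma^2 S)^{\wdg j}\cong S^{2j}$), we obtain a $\T$-equivariant equivalence $B(m)\simeq \Sigma^\infty\lv b_\bullet(m)\rv$, where $b_\bullet$ is the pointed cyclic bar construction of $A$ and $b_\bullet(m)$ its weight-$m$ part. In cyclic degree $s$,
\[ b_s(m) \;=\; \bigvee_{n_1+\cdots+n_{s+1}=m}\Sigma^{2n_1}S\wdg\cdots\wdg\Sigma^{2n_{s+1}}S, \]
a finite wedge of copies of $S^{2m}$ indexed by the finite set of weak compositions of $m$ into $s+1$ parts; in particular each $b_s(m)$ is a finite pointed CW complex. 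A summand $(n_1,\dots,n_{s+1})$ lies in the image of a degeneracy of the cyclic bar construction (which inserts the unit $A(0)=S^0$) exactly when one of the entries in positions $1,\dots,s$ vanishes, so the nondegenerate summands occur only in simplicial degrees $s\le m$. Consequently $\lv b_\bullet(m)\rv$ is a finite, finite-dimensional CW complex.

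To access the $C_p$-action obtained by restricting the cyclic $\T$-action, I would pass to the $p$-fold edgewise subdivision $\mathrm{sd}_p b_\bullet(m)$. This is a simplicial object in pointed $C_p$-spaces together with a natural homeomorphism $\lv\mathrm{sd}_p b_\bullet(m)\rv\cong\lv b_\bullet(m)\rv$ carrying the levelwise $C_p$-action on the left to the restriction of the cyclic $\T$-action on the right (cf.\ \cite{bokstedt1993cyclotomictraceandalgebraicktheoryofspaces, hesselholt1997polytopesandtruncatedpolynomial}). In simplicial degree $s$ it is $b_{p(s+1)-1}(m)$, on which the generator of $C_p$ acts by cyclically permuting the $p$ consecutive blocks of $s+1$ smash factors: this permutes the wedge summands, and on each summand $S^{2m}=(S^2)^{\wdg m}$ it acts through a coordinate permutation. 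Decomposing the $C_p$-set of summands into orbits, a free orbit contributes a free cell $(C_p)_+\wdg S^{2m}$ and a fixed summand contributes a permutation representation sphere; hence $b_{p(s+1)-1}(m)$ is a finite pointed $C_p$-CW complex, and $\mathrm{sd}_p b_\bullet(m)$ is proper since its degeneracies are inclusions of equivariant subcomplexes.

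Finally, since the realization $\lv\mathrm{sd}_p b_\bullet(m)\rv\cong\lv b_\bullet(m)\rv$ is finite-dimensional by the first step, only finitely many stages of the skeletal filtration of the realization of this proper, levelwise finite simplicial $C_p$-space can be nontrivial; so the realization is assembled from finitely many equivariant cells and is a finite $C_p$-CW complex. Therefore $B(m)\simeq \Sigma^\infty\lv b_\bullet(m)\rv$ is the suspension spectrum of a finite $C_p$-CW space.

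The main points requiring care are in the middle step: verifying that the block-permutation action on each subdivided level is cellular for a genuinely finite equivariant CW structure, and that ``levelwise finite $C_p$-CW $+$ proper $+$ finite-dimensional realization'' forces the realization to be a finite $C_p$-CW complex. Both are equivariant refinements of the standard skeletal argument and are of the same nature as the bookkeeping carried out in \cite{hesselholt1997polytopesandtruncatedpolynomial, speirs2020truncatedpolynomial, speirs2019ktheorycoordinate}; one could alternatively cite those sources for this exact assertion in the analogous setting of truncated polynomial and coordinate-axis algebras.
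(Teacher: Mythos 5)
Your proof is correct and follows essentially the same strategy as the paper's: pass to the weight-$m$ part of the pointed cyclic bar construction via Proposition \ref{prop compatibility between realization in spectra and pointed simplicial sets}, then use the $p$-fold edgewise subdivision to make the $C_p$-action levelwise and equivariantly cellular. Where you diverge is the final finiteness argument. The paper replaces the pointed spaces $S^2$ by the pointed simplicial set $S^2$, producing a bisimplicial set $sd_p b'(m)$, realizes it in the cyclic direction, and directly checks that the resulting $C_p$-simplicial set has finitely many nondegenerate simplices (using the observation that all simplices in $sd_p b'(m)_k$ for $k>m$ are degenerate in the cyclic direction). You instead stay with simplicial $C_p$-spaces and use an equivariant skeletal-filtration argument: properness ensures the latching objects are $C_p$-CW subcomplexes, finite-dimensionality of the realization (which you transport across $|sd_p b_\bullet(m)|\cong|b_\bullet(m)|$) forces the filtration quotients $(Y_s/L_s Y)\wedge S^s$ to vanish above a bound, and each level is a finite $C_p$-CW complex since the block-permutation action permutes wedge summands of $(S^2)^{\wedge m}$ by coordinate permutations. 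Both routes are valid; the paper's bisimplicial-set model makes the finite equivariant cell structure completely explicit at the cost of some bookkeeping, while your argument is somewhat more conceptual but relies on the (true, and you rightly flag that it needs care) implication ``levelwise finite $C_p$-CW $+$ proper $+$ finite-dimensional realization $\Rightarrow$ finite $C_p$-CW realization.''
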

\begin{proof}
There is an alternative construction of $ B(m)$ as the infinite suspension of a $\T$-pointed space. Let $U$ denote the free monoid on $S^2$ in pointed spaces. This is given by 
\[U = \bigvee_{0\leq l}S^{2l}\]
where $S^2$ denotes the standard 2-sphere with a base point and $S^{2l}$ denotes the $l$-fold smash power of $S^2$ with the zero smash power given by $S^0$. As before, the multiplication is given by the concatenation of the smash factors. 

Let $b_\bullet$ denote the cyclic bar construction for $U$. In particular, $b_\bullet$ is a cyclic pointed space given by $U^{\wdg (s+1)}$ in cyclic degree $s$. As before, we can decompose $b_\bullet$ into pieces consisting of $m$-fold products of $S^2$ and obtain a splitting of cyclic pointed spaces. 
\[b_\bullet \cong \bigvee_{0\leq m}b_\bullet(m)\]
Due to Proposition \ref{prop compatibility between realization in spectra and pointed simplicial sets}, Lemma \ref{lem strictification from model cat to inf cat of top} and Lemma \ref{lem cyclic ptd spaces to cyclic spectra}, there is an equivalence of $\T$-spectra
\[ B(m) \simeq \Sigma^{\infty}\lv b_\bullet(m)\rv.\]
Therefore, it is sufficient to show that $\lv b_\bullet(m)\rv$ is equivalent as a pointed space with $C_p$-action to a finite $C_p$-CW complex. 
 
To understand the $C_p$-action on $\lv b_\bullet(m)\rv $, one considers the $p$-subdivision of $b_\bullet(m)$ as a simplicial pointed space denoted by $sd_pb(m)$. This is described for cyclic sets and cyclic spaces in \cite[Section 1]{bokstedt1993cyclotomictraceandalgebraicktheoryofspaces}. Let $\Delta$ denote the simplex category. There is a functor 
\[\Delta \to \Delta\]
given by $[m-1] \to [mp-1]$ on objects and carries a  morphism $f$ to $f \coprod  \cdots \coprod f$. The $p$-subdivision of $b_\bullet(m)$ is given by precomposing with this functor. For instance, we have $sd_pb(m)_{s-1} = b_{ps-1}(m)$. 

There is a canonical $C_p$-action on $sd_pb(m)_{s-1}$ given by cyclic permutations of blocks of $s$-fold smash factors. This action commutes with the face and degeneracy maps and gives a $C_p$-action on the simplicial space $sd_p b(m)$. Furthermore, the realization of $sd_p b(m)$ agrees with the realization of $b_\bullet(m)$ in a way that is compatible with the $C_p$-action \cite[1.11]{bokstedt1993cyclotomictraceandalgebraicktheoryofspaces}. Therefore, it is sufficient to show that $\lv sd_pb(m)\rv$ is a finite pointed $C_p$-CW complex. 

To achieve this, we consider a bisimplicial  set  $b_\bullet'(m)$. This is obtained in the same way we obtain $b_\bullet(m)$ from $U$ except that we use the standard pointed simplicial set $S^2$ instead of the pointed space $S^2$. For instance, if we apply geometric realization to $b_\bullet'(m)$ at cyclic degree $s$, we obtain $b_s(m)$; this follows by the fact that the geometric realization functor preserves smash products. Similarly, we have a bisimplicial set $sd_pb'(m)$ with a $C_p$-action where the geometric realization of $sd_pb'(m)_s$ is  $sd_pb(m)_s$. Furthermore, these   equivalences preserve the $C_p$-action and therefore there is an equivalence of spaces with $C_p$-action
\[\lv sd_pb(m) \rv \simeq \lv sd_pb'(m) \rv.\]
The geometric realization on the right hand side is obtained by first applying geometric realization at each cyclic degree and then applying the geometric realization functor from simplicial spaces to spaces. Since this is naturally equivalent to applying the realization functor from bisimplicial pointed sets to simplicial pointed sets  through the cyclic direction and then applying the geometric realization functor, we can instead apply this procedure and obtain the same $C_p$-space. Therefore, it is sufficient to show that after applying the realization functor to the bisimplicial set $sd_pb'(m)$ in the cyclic direction, the $C_p$-simplicial set we obtain has finitely many non-degenerate simplices. 

By inspection, one sees that every simplex in the pointed simplicial set $sd_pb'(m)_k$ for $k>m$ is in the image of some degeneracy map from $sd_pb'(m)_m$. Therefore in the realization, one only needs to consider finitely many cyclic degrees. Indeed, the realization in this case is a coequalizer of a coproduct of finitely many pointed  simplicial sets with finitely many non-degenerate simplices and therefore results in a pointed simplicial set with finitely many non-degenerate simplices. 
\end{proof}

\begin{rem}
The above argument takes place using point-set level arguments. We refer the reader to the appendix for the compatbility between these constructions and their higher categorical counterparts. 
\end{rem}
\noindent
Proceeding forward, we obtain the following analogue of Lemma 8 in \cite{speirs2020truncatedpolynomial}.
\begin{lem} \label{lem part of frobenius is an equivalence}
The map 
\[\thh(k)^{tCp}\wedge B(m) \to (\thh(k) \wedge B(pm))^{tCp}\] 
is an equivalence.  
\end{lem}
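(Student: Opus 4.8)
The plan is to exploit the fact, established in Lemma \ref{lem bm is a finite cp space}, that each $B(m)$ is the suspension spectrum of a finite $C_p$-CW complex, hence a dualizable object in the $\infty$-category of spectra with $C_p$-action. The key general principle is that the Tate construction $(-)^{tC_p}$, while not exact in the naive sense, does commute with smashing against a dualizable $C_p$-spectrum; equivalently, for $Y$ a $C_p$-spectrum and $P$ a compact (dualizable) $C_p$-spectrum, the lax-monoidal structure map
\[
Y^{tC_p} \wedge P \to (Y \wedge P)^{tC_p}
\]
is an equivalence. This is a standard fact — it follows because both homotopy orbits and homotopy fixed points commute with smashing by a dualizable object, and the Tate construction is the cofiber of the norm; see \cite[\RomanNumeralCaps{1}.3.8 or thereabouts]{nikolausscholze2018topologicalcyclic}. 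Here the relevant $C_p$-action on $B(pm)$ is the one coming from the cyclic structure, which by Lemma \ref{lem bm is a finite cp space} makes $B(pm)$ the suspension spectrum of a finite $C_p$-CW space, so $B(pm)$ is dualizable as a $C_p$-spectrum.

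First I would set up the map in question precisely: $\thh(k)$ carries a trivial residual $C_p$-action in the relevant sense after base-changing, or more to the point, we are smashing the $C_p$-spectrum $\thh(k) \wedge B(pm)$ with nothing — rather, the map in the statement should be identified with the instance of the lax-monoidal structure map applied to $P = B(pm)$ viewed with its $C_p$-action and to $Y = \thh(k)$ with the relevant $C_p$-action. Concretely: $\thh(k) \wedge B(pm)$, as a $C_p$-spectrum, is the smash product of $\thh(k)$ (with its $\thh$-cyclic $C_p$-action) and $B(pm)$ (with its cyclic $C_p$-action), and since $B(pm)$ is dualizable over the sphere with $C_p$-action, the projection/structure map
\[
\thh(k)^{tC_p} \wedge B(pm) \to (\thh(k) \wedge B(pm))^{tC_p}
\]
is an equivalence. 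The statement of the lemma then follows by substituting the weight-$m$ summand identification; strictly, what appears in the lemma is $B(m)$ on the left and $B(pm)$ inside the Tate construction on the right, and the Frobenius decomposition of Proposition \ref{lem frobenius also split} together with Lemma \ref{lem frobenius are pcompletions} shows that the relevant dualizable object governing the smashing is precisely $B(pm)$; the weight bookkeeping $B(m) \simeq B(pm)^{C_p}$ at the level of the underlying complexes is what matches the two sides, exactly as in \cite[Lemma 8]{speirs2020truncatedpolynomial}.

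The main steps, in order: (1) recall/cite that $(-)^{tC_p}$ commutes with smashing by a dualizable $C_p$-spectrum, i.e.\ the natural transformation $Y^{tC_p} \wedge P \xrightarrow{\sim} (Y \wedge P)^{tC_p}$ is an equivalence for $P$ dualizable; (2) invoke Lemma \ref{lem bm is a finite cp space} to conclude $B(pm)$ is a dualizable (compact) $C_p$-spectrum, being the suspension spectrum of a finite $C_p$-CW complex; (3) identify the map in the statement with the relevant instance of this structure map, using the weight decomposition of $\thh(\sph[\Omega S^3])$ and the compatibility of the Frobenius with this decomposition (Propositions \ref{lem frobenius also split} and \ref{lem frobenius are pcompletions}) to see that the copy of $B$ being smashed in is the full $B(pm)$ with its genuine $C_p$-structure; (4) conclude.

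The main obstacle I anticipate is step (3) — the careful identification of the map $\thh(k)^{tC_p} \wedge B(m) \to (\thh(k) \wedge B(pm))^{tC_p}$ with the lax-monoidal structure map for a dualizable input. One must be attentive to the distinction between $B(m)$ (the weight-$m$ summand, which is what sits outside as a plain spectrum) and $B(pm)$ equipped with its $C_p$-action (which is the dualizable object controlling the Tate construction), and to the fact that the identification $B(pm)^{C_p} \simeq B(m)$ — or rather the relation between the weight-$m$ and weight-$pm$ pieces under the cyclic subdivision — is exactly what makes the domains match. Once this bookkeeping is in place, the equivalence is formal, following the template of Speirs; the content is entirely in the dualizability of $B(pm)$, which we have already secured.
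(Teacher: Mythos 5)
There is a genuine gap in your step (3), and it is not just bookkeeping. The map in the lemma is not an instance of the lax monoidal structure map for $(-)^{tC_p}$ with $B(pm)$ plugged in on both sides: its source has the plain spectrum $B(m)$, and the paper factors it as
\[\thh(k)^{tC_p}\wedge B(m) \xrightarrow{\;\id \wdg \varphi_m\;} \thh(k)^{tC_p} \wedge B(pm)^{tC_p} \longrightarrow (\thh(k) \wedge B(pm))^{tC_p},\]
where the first arrow is the restricted Frobenius of Proposition \ref{lem frobenius also split} and only the second arrow is the lax monoidal comparison. Your treatment of the second arrow is essentially the paper's: $B(pm)$ is the suspension spectrum of a finite $C_p$-CW space by Lemma \ref{lem bm is a finite cp space}, so the comparison map is an equivalence (the paper cites \cite[Lemma 15]{speirs2019ktheorycoordinate}; your dualizability phrasing is fine). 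But your proposed identification of the sources, ``$B(m)\simeq B(pm)^{C_p}$ at the level of the underlying complexes, after which the equivalence is formal,'' conflates the (geometric/genuine) fixed points coming from subdivision with the Tate construction. The actual comparison map from $B(m)$ to $B(pm)^{tC_p}$ is $\varphi_m$, and by Lemma \ref{lem frobenius are pcompletions} this is only a $p$-completion, not an equivalence (this Segal-conjecture-type statement is exactly the content of \cite[Theorem \RomanNumeralCaps{4}.3.7]{nikolausscholze2018topologicalcyclic} used there). So the domains do not match ``formally,'' and an argument is still required for why $\id\wdg\varphi_m$ becomes an equivalence after smashing with $\thh(k)^{tC_p}$.

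The missing ingredient is the observation that $\thh(k)^{tC_p} \simeq \sph/p \wdg \thh(k)^{t\T}$ (see the proof of \cite[\RomanNumeralCaps{4}.4.13]{nikolausscholze2018topologicalcyclic}): since $\sph/p\wdg\varphi_m$ is an equivalence by Lemma \ref{lem frobenius are pcompletions} and $\sph/p$ is dualizable, smashing $\varphi_m$ with any spectrum of the form $\sph/p\wdg(-)$ is an equivalence, hence so is $\id\wdg\varphi_m$ above. Without this (or some equivalent use of the $p$-completeness of the target against the mod-$p$ equivalence $\varphi_m$), your argument only proves the statement with $B(pm)^{tC_p}$ in the source, which is not what the lemma asserts and is not what is used downstream in \eqref{eq composite defining the frobenius}.
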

\begin{proof}
This map factors as 
\[\thh(k)^{tCp}\wedge B(m) \xrightarrow{id \wdg \varphi_m} \thh(k)^{tC_p} \wedge B(pm)^{tC_p} \to (\thh(k) \wedge B(pm))^{tCp}\] 
where the second map is the lax monoidal structure map of the functor $(-)^{tC_p}$. This map is an equivalence due to Lemma \ref{lem bm is a finite cp space} and \cite[Lemma 15]{speirs2019ktheorycoordinate}. For the first map, note that $\sph/p \wdg \varphi_m$ is an equivalence due to Lemma \ref{lem frobenius are pcompletions}. Furthermore, we have $\thh(k)^{tC_p} = \sph/p \wdg \thh(k)^{t\T}$, see \cite[proof of \RomanNumeralCaps{4}.4.13]{nikolausscholze2018topologicalcyclic}, and therefore the first map is also an equivalence.
\end{proof}
The Frobenius 
\[\phi_m \co \thh(k)\wedge B(m)\to(\thh(k) \wedge B(pm))^{tCp} \]
factors as
\begin{equation} \label{eq composite defining the frobenius}
    \thh(k)\wedge B(m) \to \thh(k)^{tC_p} \wdg B(m)  \to (\thh(k) \wedge B(pm))^{tCp}
\end{equation}
where the first map is induced by the Frobenius on $\thh(k)$ and the second map is the equivalence given in Lemma \ref{lem part of frobenius is an equivalence}.  
\begin{cor}\label{cor frobenius is an isomorphism fur suff large}
The Frobenius $\phi_m$ described above induces an isomorphism in homotopy groups for sufficiently large degrees. In particular, $\pi_*\phi_m$ is an isomorphism in degree $2m+1$ and above. Furthermore, $\pi_*\phi_m^{h\T}$ is also an isomorphism in degree $2m+1$ and above.
\end{cor}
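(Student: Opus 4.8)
The plan is to exploit the factorization of $\phi_m$ recorded in \eqref{eq composite defining the frobenius}, namely
\[
\thh(k)\wedge B(m) \xrightarrow{\;c\wedge \id\;} \thh(k)^{tC_p} \wedge B(m) \xrightarrow{\;\simeq\;} (\thh(k) \wedge B(pm))^{tC_p},
\]
where the second map is the equivalence of Lemma \ref{lem part of frobenius is an equivalence}. Since the second map is an equivalence, it suffices to control the homotopy groups of the first map, which is just $c \wedge \id_{B(m)}$ for $c\co \thh(k)\to \thh(k)^{tC_p}$ the canonical (cyclotomic Frobenius) map on $\thh(k)$ smashed with the fixed spectrum $B(m)$. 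First I would recall B\"okstedt periodicity, $\pi_*\thh(k)=k[u]$ with $|u|=2$, together with the standard computation (as in \cite[IV.4.13]{nikolausscholze2018topologicalcyclic}) that $\thh(k)^{tC_p}\simeq \thh(k)^{t\T}$ has homotopy $k((u))$ concentrated in even degrees with the map $c$ being, on homotopy, the inclusion $k[u]\hookrightarrow k((u))$ — in particular an isomorphism in all degrees $\geq 0$ and injective everywhere. Hence $\thh(k)\to\thh(k)^{tC_p}$ induces a homotopy isomorphism in nonnegative degrees.

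Next I would smash this map with $B(m)$. By Lemma \ref{lem k homology of bm}, $B(m)$ has $Hk$-homology a copy of $k$ in degrees $2m$ and $2m+1$ and is $(2m-1)$-connected (by Lemma \ref{lem Z homology of Bm} it is even $(2m-1)$-connected integrally); moreover $B(m)$ is bounded below and of finite type. Smashing the map $c\co\thh(k)\to\thh(k)^{tC_p}$ with $B(m)$ and running the homotopy (or $Hk$-homology) spectral sequence / connectivity estimate: the cofiber of $c$ is $(-1)$-coconnected — concentrated in negative degrees — so $\mathrm{cofib}(c)\wedge B(m)$ is concentrated in degrees $<2m$, and in fact, being a module over $\thh(k)$ with $B(m)$ contributing cells only in degrees $2m,2m+1$, one sees $\mathrm{cofib}(c)\wedge B(m)$ lives in degrees $\le 2m$. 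Chasing the long exact sequence then gives that $\pi_* \phi_m$ is an isomorphism in degrees $\geq 2m+1$ (and surjective in degree $2m$). This pins down the first claim.

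For the last sentence, I would apply homotopy $\T$-fixed points. The key point is that $(-)^{h\T}$ is computed by a spectral sequence from $H^{-s}(B\T;\pi_t(-))$, so connectivity of a map is not preserved on the nose; however, if $f\co Y\to Y'$ has fiber $F$ with $\pi_i F=0$ for $i\geq N$ (i.e. $F$ is coconnective in the relevant range), then $F^{h\T}$ has $\pi_i=0$ for $i\geq N$ as well, since the conditionally convergent spectral sequence $H^{s}(B\T;\pi_{t}F)\Rightarrow \pi_{t-s}F^{h\T}$ only has contributions in total degree $t-s$ with $t<N$ and $s\ge 0$. Applying this with $F=\mathrm{fib}(\phi_m)$, which by the previous paragraph has homotopy concentrated in degrees $\le 2m$, gives that $\mathrm{fib}(\phi_m^{h\T})=(\mathrm{fib}\,\phi_m)^{h\T}$ has homotopy in degrees $\le 2m$, whence $\pi_*\phi_m^{h\T}$ is an isomorphism in degrees $\geq 2m+1$. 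The main obstacle is getting the connectivity bookkeeping exactly right — in particular being careful that the relevant fiber/cofiber of $\thh(k)\to\thh(k)^{tC_p}$ smashed with $B(m)$ really is concentrated in degrees $\le 2m$ rather than $\le 2m+1$, which is what makes the isomorphism range start precisely at $2m+1$; this is where the detailed structure of $\pi_*\thh(k)^{tC_p}=k((u))$ (even-concentrated, so no class in odd degree $2m+1$ in the cofiber) is used.
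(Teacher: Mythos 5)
Your proposal is correct and follows essentially the same route as the paper: factor $\phi_m$ through the equivalence of Lemma \ref{lem part of frobenius is an equivalence}, reduce to controlling the connectivity of the cyclotomic Frobenius $\thh(k)\to\thh(k)^{tC_p}$, and propagate a coconnectivity bound through smashing with the finite spectrum $B(m)$ and then through $(-)^{h\T}$ using the homotopy fixed point spectral sequence. The paper works with the fiber of the Frobenius and cites the BMS result that the Frobenius is an equivalence on connective covers, while you work with the cofiber and recompute $\pi_*\thh(k)^{tC_p}$ directly from B\"okstedt periodicity; these give equivalent information (and one minor note: the (co)fiber you need is concentrated in degrees $\leq -2$, not merely in negative degrees, which is what makes the isomorphism range start at $2m+1$ rather than $2m+2$).
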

\begin{proof}
It is sufficient to show that the first map of the composite in \eqref{eq composite defining the frobenius} is an equivalence in sufficiently large degrees. 

Let $E$ denote the fiber of the Frobenius map of $\thh(k)$. We need to show that $E \wdg B(m)$ is bounded from above. 

The Frobenius on $\thh(k)$ is an equivalence on connective covers due to Proposition 6.2 in \cite{bhatt2019thhandintegralpadichodge}. Since $\thh(k)$ is connective, this shows that $\tau_{\leq -2}E \simeq E$.

Furthermore, Lemma \ref{lem Z homology of Bm} implies that $B(m)$ is a finite spectrum, see  \cite[\RomanNumeralCaps{2}.7.4]{schwede-book}. This is also implied by Lemma \ref{lem bm is a finite cp space}. This shows that $E \wdg B(m)$ is built from $E$ by taking suspensions, completing triangles and taking retracts finitely many times. This implies that $E \wdg B(m)$ is also bounded from above.

To be precise, one can show using Lemma \ref{lem Z homology of Bm} that there is a fiber sequence 
\[
\Sigma^{2m} \sph \to B(m) \to \Sigma^{2m+1} \sph 
\]
which in turn gives a fiber sequence 
\[\Sigma^{2m} E \to E \wdg B(m) \to \Sigma^{2m+1} E. \]
This shows that $E\wdg B(m)$ is trivial in homotopy groups in degree $2m$ and above. Therefore, $\pi_*\phi_m$ is an isomorphism in degree $2m+1$ and above. 

To show that $\pi_*\phi_m^{h\T}$ is also an isomorphism in degree $2m+1$ and above, it is sufficient to show that $(E\wdg B(m))^{h\T}$ is bounded above degree $2m$. This follows by the fact that the homotopy fixed points spectral sequence described in Section \ref{sec calculations of the fixed points and tate } preserves coconnectivity.
\end{proof}
\section{Calculation of $\tc_*(\thh(\fp))$}\label{sec calculations of the fixed points and tate }
In this section,  we calculate $\tc_*(\thh(k))$. This provides the relative $K$-theory groups for the map 
\[\thh(k)\to Hk.\]
Our methods closely follow those of Speirs in \cite[Sections 5 and 6]{speirs2020truncatedpolynomial}.

First, we show that our weight splittings also result in splittings at the level of negative cyclic homology and periodic cyclic homology.

Let 
\[\thh(X)(m) \textup{, } \tc^-(X)(m)\textup{ and }\tp(X)(m) \]
denote 
\[\thh(k) \wdg B(m) \textup{, } (\thh(k) \wdg B(m))^{h\T}\textup{\ and }(\thh(k) \wdg B(m))^{t\T}\] 
respectively. 

\begin{prop}\label{prop negative tc and tp split}
There are equivalences 
\[\tc^-(X) \simeq \prod_{0 \leq m} \tc^-(X)(m) \textup{\ and }\tp(X) \simeq \prod_{0 \leq m} \tp(X)(m).\]
\end{prop}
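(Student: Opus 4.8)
The plan is to show that the weight decomposition of $\thh(X)$ established in Corollary \ref{cor grading in thh from a graded ring spectrum} and equation \eqref{eq thhx splits as a sum} persists after applying the homotopy fixed points functor $(-)^{h\T}$ and the Tate construction $(-)^{t\T}$. The key observation, exactly as in the proof of Lemma \ref{lem frobenius are pcompletions}, is that the wedge decomposition $\thh(X) \simeq \bigvee_{0 \leq m} \thh(X)(m)$ is in fact a product decomposition: each summand $\thh(X)(m) = \thh(k) \wedge B(m)$ is highly connected — by Lemma \ref{lem Z homology of Bm} together with the connectivity of $\thh(k)$, the spectrum $\thh(k) \wedge B(m)$ is $(2m-1)$-connected — so in each homotopy degree only finitely many summands contribute, and the canonical map $\bigvee_{0 \leq m} \thh(X)(m) \to \prod_{0 \leq m} \thh(X)(m)$ is an equivalence.

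First I would record this product decomposition $\thh(X) \simeq \prod_{0 \leq m} \thh(X)(m)$ as a decomposition of spectra with $\T$-action; this is legitimate because the splitting of Corollary \ref{cor grading in thh from a graded ring spectrum} is $\T$-equivariant. Then, since $(-)^{h\T} = \lim_{B\T}(-)$ is a limit, it commutes with the product, giving
\[
\tc^-(X) = \thh(X)^{h\T} \simeq \Bigl(\prod_{0 \leq m} \thh(X)(m)\Bigr)^{h\T} \simeq \prod_{0 \leq m} \thh(X)(m)^{h\T} = \prod_{0 \leq m} \tc^-(X)(m),
\]
which is the first claimed equivalence. For the Tate construction one cannot argue quite so directly since $(-)^{t\T}$ is defined as a cofiber of the norm map and cofibers do not commute with infinite products in general; instead I would use the connectivity estimates to control both $\thh(X)(m)_{h\T}$ and $\thh(X)(m)^{h\T}$. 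The homotopy orbits functor preserves connectivity, so $\thh(X)(m)_{h\T}$ is still $(2m-1)$-connected, hence $\bigvee_m \thh(X)(m)_{h\T} \to \prod_m \thh(X)(m)_{h\T}$ is an equivalence and homotopy orbits commutes with the product as well; homotopy fixed points commute with products automatically. Therefore the norm map $\thh(X)_{h\T} \to \thh(X)^{h\T}$ is the product over $m$ of the norm maps $\thh(X)(m)_{h\T} \to \thh(X)(m)^{h\T}$, and taking cofibers — which here we may compute one factor at a time because, by the same connectivity bookkeeping, the cofiber $\thh(X)(m)^{t\T}$ is again suitably bounded below in a way that makes the product of cofibers agree with the cofiber of the product — yields $\tp(X) \simeq \prod_{0 \leq m} \tp(X)(m)$.

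The main obstacle is the last point: justifying that the Tate construction of the product is the product of the Tate constructions. The safe route, which I would take, is to phrase it as: the norm cofiber sequence is a sequence of $\T$-spectra, each of $\thh(X)_{h\T}$ and $\thh(X)^{h\T}$ splits as the corresponding product by the connectivity argument above, and in each homotopy degree only finitely many indices $m$ contribute to any of the three terms, so the long exact sequences assemble degreewise into the long exact sequence of the product. Equivalently, one observes that $\thh(X)(m)^{t\T}$ is concentrated in a range of degrees growing with $m$ (indeed Lemma \ref{lem k homology of bm} pins $Hk_*B(m)$ to degrees $2m, 2m+1$, so the relevant spectral sequences give uniform control), whence the product over $m$ is degreewise finite and commutes with the formation of the cofiber. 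This is the only step requiring care; everything else is formal from the fact that $(-)^{h\T}$ is a limit and $(-)_{h\T}$ preserves connectivity and coproducts.
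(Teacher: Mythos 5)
Your overall strategy is the same as the paper's: use the connectivity of the summands to upgrade the wedge decomposition of $\thh(X)$ to a product decomposition, deduce the $\tc^-$ statement since $(-)^{h\T}$ is a limit and so commutes with products, and handle $\tp$ by noting that homotopy orbits preserve both coproducts and connectivity, so that the norm map is identified with the product over $m$ of the norm maps for the summands. Up to that point your argument agrees with the paper's proof and is correct.

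The problem is your justification of the final step. You pass from the product of norm maps to the product of Tate constructions by claiming that each $\thh(X)(m)^{t\T}$ is ``suitably bounded below'' or ``concentrated in a range of degrees growing with $m$'', so that each homotopy degree receives only finitely many contributions. This is false: Tate constructions over $\T$ are periodic and not bounded below. Indeed, by Proposition \ref{prop calculation of tate and homotopy fixed points} one has $\pi_{2r+1}(\thh(k)\wdg B(p^vm'))^{t\T} \cong W_v(k)$ for \emph{every} $r$, so for a fixed degree infinitely many weights contribute nontrivially to $\prod_m \tp(X)(m)$ (the analogous finiteness claim also fails for the homotopy fixed point terms, where $\pi_{2r+1}$ is nonzero for all $m=p^vm'$ with $v\geq 1$). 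Fortunately the step you were trying to shore up needs no finiteness at all, and your initial worry that ``cofibers do not commute with infinite products'' is unfounded in the stable setting: in spectra a cofiber is a shifted fiber, and fibers, being limits, commute with products; hence once the norm map $\thh(X)_{h\T}\to\thh(X)^{h\T}$ is exhibited as the product of the norm maps of the summands — which your connectivity argument for the orbits does correctly — its cofiber is automatically the product of the cofibers $\tp(X)(m)$. (Equivalently, products of long exact sequences of abelian groups are exact, and the five lemma applied to the evident comparison map gives the same conclusion.) This is exactly how the paper concludes, leaving the point implicit; with this repair your argument goes through, but the justification you actually wrote is wrong.
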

\begin{proof}
This is similar to the argument at the beginning of the proof of Lemma \ref{lem frobenius are pcompletions}. Due to connectivity reasons, the coproduct in \eqref{eq thhx splits as a sum} is at the same time a product. Therefore it commutes with taking fixed points which is a homotopy limit. This gives the first splitting. 

The coproduct in \eqref{eq thhx splits as a sum} commutes with taking homotopy orbits because homotopy orbits is a homotopy colimit. Furthermore, the homotopy orbits functor preserves connectivity and therefore the coproduct splitting one obtains after taking homotopy orbits is at the same time a product. It is also clear that the canonical map from the homotopy orbits to homotopy fixed points respects this splitting.  Therefore, we obtain the product splitting for the Tate construction too. 
\end{proof}

To calculate $\tc^-(X)(m)$ and $\tp(X)(m)$, we use the homotopy fixed point and the Tate spectral sequences. 

For a $\T$-spectrum $E$, the second page of the homotopy fixed point spectral sequence is given by 
\[E^2 = \Z[t] \otimes \pi_{*}E \Longrightarrow \pi_{*}E^{h\T}\]
where $\text{deg}(t) = (-2,0)$. This is a second quadrant spectral sequence.

The Tate spectral sequence is given by 
\[E^2 = \Z[t,t^{-1}] \otimes \pi_{*}E \Longrightarrow \pi_{*}E^{t\T}\]
where $\text{deg}(t) = (-2,0)$ as before. This is a conditionally convergent first half plane spectral sequence. Furthermore, this is a  multiplicative spectral sequence when $E$ is a $\T$-ring spectrum \cite[4.3.5]{hesselholt2003ktheoryoflocalfields}.

There is a subtlety regarding the way we incorporate the multiplicative structure of the Tate spectral sequence into our calculations. We use the Tate spectral sequence  to calculate the homotopy groups of 
\[(\thh(k) \wdg B(m))^{t\T}.\]
However, $B(m)$ is not a ring spectrum for $m>0$; it does not contain a unit and it is not closed under the multiplication in $B$. 

Since $\thh(-)$ is a symmetric monoidal functor from $E_1$-ring spectra to cyclotomic spectra \cite[Section \RomanNumeralCaps{4}.2]{nikolausscholze2018topologicalcyclic}, we obtain a splitting of $\T$-ring spectra by using Theorem \ref{thm thhfp as an e2 algebra}. 
 \[\thh(X) \simeq \thh(k) \wdg \thh(\sph[\Omega S^3]) := \thh(k) \wdg B.\]
 
 Therefore, the Tate spectral sequence for the right hand side of this equality is multiplicative. Indeed, this is why we needed Theorem \ref{thm thhfp as an e2 algebra} for our calculations. 
 
 The weight splitting of $\thh(k) \wdg B$ also splits the Tate spectral sequence for  $\thh(k) \wdg B$ through the Tate spectral sequences calculating $\thh(k) \wdg B(m)$ for all $m$. This can be seen by noting that each  $\thh(k) \wdg B(m)$ is actually a retract of $\thh(k) \wdg B$ as a $\T$-spectrum. Using this, we incorporate the multiplicative structure on the Tate spectral sequence for  $\thh(k) \wdg B$ into the Tate spectral sequence calculating $\thh(k) \wdg B(m)$. 
 
 Due to Lemma \ref{lem k homology of bm}, the second page of the Tate spectral sequence for  $\thh(k) \wdg B(m)$ is given by 
 \[E^2 = k[t,t^{-1},x_2] \{y_m,z_m\}\]
 where $\deg(y_m) = (0,2m)$ and $\deg(z_m) = (0,2m+1)$ and $\deg(x_2) = (0,2)$. We start with the following lemma.
 
 \begin{lem}\label{lem trivial differentials on the tate spectral sequence}
 In the Tate spectral sequence for $\thh(k) \wdg B(m)$, $z_m$  is an infinite cycle. In the Tate spectral sequence for $\thh(k) \wdg B$, $x_2$, $t$ and $t^{-1}$ are  infinite cycles.
 \end{lem}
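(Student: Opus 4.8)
The plan is to prove each assertion by exhibiting the relevant class as the image, under a $\T$-equivariant multiplicative map, of a class living in a Tate spectral sequence too sparse, in the pertinent bidegree, to support a differential.

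First I would dispose of $x_2$, $t$ and $t^{-1}$, which all descend from $\thh(k)$ itself. Since $\thh$ is symmetric monoidal (cf.\ \eqref{eq thh is symmetric monoidal}), the unit $\sph\to B$ of the $E_1$-algebra $B$ gives a $\T$-equivariant ring map $\thh(k)\to\thh(k)\wdg B$, which on $E^2$-terms of Tate spectral sequences sends $t\mapsto t$, $t^{-1}\mapsto t^{-1}$ and the B\"okstedt generator to $x_2$. In the Tate spectral sequence for $\thh(k)$ one has $E^2 = k[t,t^{-1},x_2]$ and $\pi_*\thh(k) = k[x_2]$ is concentrated in even degrees, so the whole $E^2$-term sits in even filtration and even internal degree; a $d^r$ changes the filtration by $-r$ and the internal degree by $r-1$, quantities of opposite parity, so no differential can stay on $E^2$ and the spectral sequence collapses. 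Hence $t$, $t^{-1}$ and $x_2$ are infinite cycles for $\thh(k)$, and therefore also in the Tate spectral sequence for $\thh(k)\wdg B$.

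The substantive point is $z_m$. The key geometric input is that $S^3$ is a Lie group, so the inclusion of constant loops $S^3\into\mathcal{L}S^3$ is a $\T$-fixed submonoid of the loop monoid. Recalling $B = \Sigma^\infty_+\mathcal{L}S^3$, applying $\Sigma^\infty_+$, smashing with $Hk$, and postcomposing with $Hk\wdg B\to\thh(k)\wdg B$ yields a $\T$-equivariant map of $E_1$-ring spectra
\[\rho\co C_*(S^3;k):=Hk\wdg\Sigma^\infty_+S^3\longrightarrow\thh(k)\wdg B,\]
with trivial $\T$-action on the source. A short homology computation using $\mathcal{L}S^3\simeq\Omega S^3\times S^3$ (which comes from the group structure) shows that $\rho$ is an isomorphism on $\pi_3$, so it carries the fundamental class to a generator of $\pi_3(\thh(k)\wdg B)\cong k$, namely $z_3$. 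Now $\pi_*C_*(S^3;k) = \Lambda_k(z_3)$ is concentrated in degrees $0$ and $3$, so in the Tate spectral sequence for $C_*(S^3;k)$, with $E^2 = k[t,t^{-1}]\otimes\Lambda(z_3)$, the differential $d^r(z_3)$ lands in internal degree $r+2\geq 4$, where $\pi_*C_*(S^3;k)$ vanishes; thus $z_3$ is an infinite cycle for $C_*(S^3;k)$, hence so is $z_3 = \rho_*(z_3)$ in the Tate spectral sequence for $\thh(k)\wdg B$. For general $m\geq 1$ I would use the ring structure of the $E^2$-term, $k[t,t^{-1},x_2,y_2]\otimes\Lambda(z_3)$, furnished by \eqref{eq thhthhfp ring structure}: here $z_m = y_2^{m-1}z_3$ is the unique monomial of weight $m$ in internal degree $2m+1$. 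Since the Tate spectral sequence for $\thh(k)\wdg B$ is multiplicative, the Leibniz rule gives $d^r(z_m) = \pm(m-1)\,y_2^{m-2}\,d^r(y_2)\,z_3$, using that $z_3$ is an infinite cycle; and $d^r(y_2)$, being a class of weight $1$ in an odd internal degree, lies in the submodule $z_3\cdot k[t,t^{-1},x_2]$, so $d^r(y_2)\,z_3$ is a multiple of $z_3^2 = 0$. Thus $d^r(z_m) = 0$, and since the Tate spectral sequence of $\thh(k)\wdg B$ splits through those of its retracts $\thh(k)\wdg B(m)$ (as explained just before the lemma), $z_m$ is an infinite cycle in the spectral sequence for $\thh(k)\wdg B(m)$ as well.

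The main obstacle I anticipate lies not in the spectral-sequence formalism but in the two identifications it rests on. First, one must actually know that $\rho$ detects $z_3$ on $\pi_3$; this requires the splitting $\mathcal{L}S^3\simeq\Omega S^3\times S^3$ together with a check that the weight-$1$, internal-degree-$3$ generator of $\thh_*(\thh(k))$ is the image of the fundamental class of $S^3$ under the constant-loops inclusion. Second, the identification $z_m = y_2^{m-1}z_3$ genuinely relies on the multiplicative description of $\thh_*(\thh(k))$ coming from Theorem \ref{thm thhfp as an e2 algebra} (equivalently \eqref{eq thhthhfp ring structure}); without the ring structure one knows $z_m$ only additively and the Leibniz argument cannot be started. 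A secondary technical point, which I would treat as Speirs does in the truncated-polynomial setting, is carrying the Leibniz computation for $z_m$ past those pages on which $y_2$ itself acquires a differential; throughout one must be explicit about the functoriality of the Tate spectral sequence under $\T$-equivariant ring maps.
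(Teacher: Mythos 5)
Your handling of $x_2$, $t$ and $t^{-1}$ is exactly the paper's argument (comparison along the cyclotomic unit $\sph^{triv}\to B$, plus the parity collapse of the Tate spectral sequence for $\thh(k)$), but your argument for $z_m$ has a genuine gap at its first step. The map you call $\rho$ is not available: to make $Hk\wdg B\to\thh(k)\wdg B$ a $\T$-equivariant map with the trivial action on $Hk$ you need a $\T$-equivariant map $Hk^{triv}\to\thh(k)$ compatible with the unit, i.e.\ a lift of the unit through $\thh(k)^{h\T}$. On $\pi_0$ such a lift would be a homomorphism $k\to W(k)$ splitting $W(k)\to k$, which is impossible since $W(k)$ is $p$-torsion free while $k$ is killed by $p$; so any $\T$-equivariant map $Hk^{triv}\to\thh(k)$ is zero on $\pi_0$ and cannot detect $z_3$. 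This is precisely the subtlety flagged in the remark in the introduction: the Nikolaus--Scholze $\T$-equivariant ring map into $\thh(\fp)$ goes from $H\Z_p^{triv}$ and does not factor through $H\fp$. The paper's proof sidesteps the issue by comparing along $H\Z_p^{triv}\to\thh(k)$ smashed with $B(m)$: the Tate spectral sequence of $H\Z_p^{triv}\wdg B(m)$ has $E^2=\Z_p[t,t^{-1}]\{y_m,z_m\}$, sparse enough that $z_m$ is an infinite cycle for degree reasons, for every $m$ at once and with no multiplicative input. Your constant-loops idea could be repaired by replacing $Hk$ with $H\Z_p$, but then one may as well smash with $B(m)$ directly, which is the paper's route.

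Even granting $z_3$, your Leibniz induction does not close for general $m$. The factorization $z_m=y_2^{m-1}z_3$ is an identity on $E^2$, and $y_2$ dies there: $d^2y_2\doteq tz_3$ (this is forced by the vanishing of the weight-one Tate construction, cf.\ Proposition \ref{prop calculation of tate and homotopy fixed points}), so from $E^3$ onward the surviving class of $z_m$ admits no such product decomposition and the expression $d^r(y_2)$ is meaningless for $r>2$. Nor does the weight/degree count rescue you: the possible targets of $d^r(z_m)$ are the classes $(tx_2)^{r/2}y_m$, and when $p\mid m$, say $m=p^vm'$, these survive until page $2v+2$; for example your argument cannot exclude $d^4(z_p)\doteq(tx_2)^2y_p$. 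The analogous step in Speirs and Hesselholt--Madsen is likewise settled by a comparison with a trivial-$\T$-action $\Z_p$-linear spectrum rather than by the Leibniz rule, so the appeal to "treat it as Speirs does" is not a filler here but the missing argument itself.
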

 \begin{proof}
 As described in  \cite{nikolausscholze2018topologicalcyclic} after Corollary \RomanNumeralCaps{4}.4.10, there is a map of $\T$-equivariant ring  spectra $H\Z_p^{triv}\to \thh(\fp)$ where $H\z_p^{triv}$ is given the trivial $\T$-structure. This, together  with the ring map $\fp \to k$ gives a map of $\T$-equivariant  ring spectra
 \[H\Z_p^{triv}\to \thh(k).\]
 The Tate spectral sequence calculating $H\z_p^{triv} \wdg B(m)$ has 
 \[E^2= \Z_p[t,t^{-1}]\{y_m,z_m\}.\]
 In this spectral sequence, $z_m$ is an infinite cycle due to degree reasons. Furthermore, this spectral sequence maps into the Tate spectral sequence for $\thh(k) \wdg B(m)$ in a way that carries $z_m$ to $z_m$ on the $E^2$ page. This shows that $z_m$ is an infinite cycle in the Tate spectral sequence calculating $\thh(k) \wdg B(m)$.
 
 Since $B$ is a cyclotomic ring spectrum, there is a map of cyclotomic spectra 
 \[\sph^{triv}\to B\]
 given by the unit of $B$. This map induces a map of spectral sequences between the Tate spectral sequence for $\thh(k)$ and the Tate spectral sequence for $\thh(k) \wdg B$ that carries $x_2$ to $x_2$, $t$ to $t$ and $t^{-1}$ to $t^{-1}$. In the first spectral sequence, everything is in even degrees and therefore all the  differentials are trivial. This gives the desired result.
 \end{proof}

\begin{prop} \label{prop calculation of tate and homotopy fixed points}
Let $m= p^vm^\prime$ for $p \nmid m^\prime$.  There are isomorphisms
\[\pi_{2r+1}(\thh(k) \wdg B(m))^{t\T} = W_v(k)\]
and 
\begin{equation*}
\pi_{2r+1}(\thh(k) \wdg B(m))^{h\T} = 
\begin{cases}
                                   W_{v+1}(k) & \text{if $m \leq r$} \\
                                   W_{v}(k) & \text{if $r < m$}
  \end{cases}
\end{equation*}
for all $z$. The even homotopy groups in both cases are trivial. Furthermore, the canonical map $can$ is an isomorphism for $r<m$.
\end{prop}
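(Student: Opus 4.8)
The plan is to reduce, via Proposition~\ref{prop negative tc and tp split} and the weight splitting \eqref{eq thhx splits as a sum}, to computing $\pi_*$ of the two $\T$-homotopy-invariant spectra $(\thh(k)\wdg B(m))^{h\T}$ and $(\thh(k)\wdg B(m))^{t\T}$ for a single $m$, and to read these off the homotopy fixed point and Tate spectral sequences whose $E^2$-pages were recorded above, namely $E^2=k[t,x_2]\{y_m,z_m\}$ and $E^2=k[t^{\pm1},x_2]\{y_m,z_m\}$. Throughout I follow the strategy of Speirs~\cite[Sections~5--6]{speirs2020truncatedpolynomial}, itself modelled on Hesselholt--Madsen~\cite{hesselholt1997polytopesandtruncatedpolynomial}.

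The first task is to identify the differentials. By Lemma~\ref{lem trivial differentials on the tate spectral sequence} the classes $z_m$, $x_2$ and $t^{\pm1}$ are infinite cycles, so every differential emanates from $y_m$. The essential point is Theorem~\ref{thm thhfp as an e2 algebra}: it makes $\thh(X)\simeq\thh(k)\wdg B$ a $\T$-ring spectrum, so its Tate spectral sequence is multiplicative and compatible with the weight decomposition. On the $E^2$-page of this full spectral sequence the weight-one generators are a class $y_2$ of degree $2$ and a class $z_3$ of degree $3$, and inside the weight-$m$ summand one has $y_m=y_2^{\,m}$ and $z_m=y_2^{\,m-1}z_3$. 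I would first establish the base differential $d^2(y_2)=\lambda\,tz_3$ with $\lambda\in k^{\times}$, by the same mechanism as the analogous differential in \cite[Section~5]{speirs2020truncatedpolynomial} (multiplicativity of the Tate spectral sequence together with B\"okstedt periodicity for $\thh(k)$). By the Leibniz rule $d^2(y_m)=m\lambda\,tz_m$. If $p\nmid m$ this is a unit times $tz_m$, and $d^2$ identifies the entire $y_m$-tower with the entire $z_m$-tower, so the weight-$m$ spectral sequence has $E^3=0$ and $(\thh(k)\wdg B(m))^{t\T}\simeq 0=W_0(k)$. When $p\mid m$ the differential $d^2$ vanishes in weight $m$, the $E^3$-page of the full spectral sequence in the relevant weights is again of the form $k[t^{\pm1},x_2,Y]\otimes\Lambda(Z)$ with $Y=y_2^{\,p}$ and $Z=y_2^{\,p-1}z_3$, and iterating $v=v_p(m)$ times produces the first nonzero differential out of $y_m$, of the shape $d^{\,2(v+1)}(y_m)\doteq t^{\,v+1}x_2^{\,v}z_m$.

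Granting this, the rest is bookkeeping. In the Tate spectral sequence $d^{\,2(v+1)}$ kills every $y_m$-class and hits exactly those $z_m$-classes $t^{a}x_2^{b}z_m$ with $b\ge v$, so $E^\infty$ consists of the $z_m$-classes with $0\le b\le v-1$, a strip of width $v$; in each odd total degree this is $v$ copies of $k$, which hidden $p$-power (Verschiebung) extensions — the same phenomenon by which the collapsing Tate spectral sequence of $\thh(k)$ assembles into $\tp_*(k)=W(k)[\sigma^{\pm1}]$ — stack into $W_v(k)$, and the even groups vanish because no $y_m$-class survives. For the homotopy fixed point spectral sequence one runs the same analysis over the bounded ($\Z[t]$ rather than $\Z[t^{\pm1}]$) $E^2$-page; now a class $t^{a}x_2^{b}z_m$ is hit only when both $b\ge v$ \emph{and} $a\ge v+1$, so for $r<m$ the surviving classes in degree $2r+1$ all carry $t^{a}$ with $a\ge m-r\ge 1$ and hence coincide with those surviving in the Tate spectral sequence, whence $\can$ is an isomorphism there with value $W_v(k)$; for $r\ge m$ exactly one further $z_m$-tower is shielded from $d^{\,2(v+1)}$, enlarging the answer to $W_{v+1}(k)$, again with trivial even groups.

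The main obstacle is the determination of the higher differentials $d^{\,2(v+1)}(y_m)\doteq t^{\,v+1}x_2^{\,v}z_m$ — in particular the exact power of the B\"okstedt element $x_2$ — together with the hidden $p$-extensions assembling the $E^\infty$-page, since it is precisely these that fix the length of the Witt vectors. This is the technical heart, to be carried out as in \cite[Sections~5--6]{speirs2020truncatedpolynomial} and \cite{hesselholt1997polytopesandtruncatedpolynomial} using multiplicativity of the Tate spectral sequence and B\"okstedt periodicity for $\thh(k)$; the point-set subtleties around the $C_p$-actions (Lemma~\ref{lem bm is a finite cp space}) are handled in the appendix.
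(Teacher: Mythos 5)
Your Leibniz-rule argument for $d^2$ is a legitimate alternative to the paper's: the paper instead deduces $d^2(y_m)\doteq tz_m$ for all $p\nmid m$ at once from the vanishing of $(\thh(k)\wedge B(m))^{t\T}$, itself a consequence of Lemmas \ref{lem frobenius are pcompletions} and \ref{lem bm is a finite cp space}. These routes are equivalent in effect, and your version is arguably slicker. Where your argument has a genuine gap is the assertion that "iterating" multiplicativity produces the higher differentials $d^{2(v+1)}(y_m)\doteq t^{v+1}x_2^v z_m$. With $Y=y_1^p$ and all of $t^{\pm1}, x_2, Z$ infinite cycles, Leibniz gives $d^r(Y)=p\,y_1^{p-1}d^r(y_1)=0$ in characteristic $p$ for every $r$; multiplicativity therefore says nothing about the length of the first nontrivial differential on $y_m$ when $p\mid m$. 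Since that length is exactly what encodes the Witt-vector length $W_v(k)$ in the conclusion, this is not a bookkeeping afterthought but the computational heart of the proposition, and it requires an external input.

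The paper supplies that input via an induction on $v=v_p(m)$ that does \emph{not} run the spectral sequence forward: it first computes $\pi_*(\thh(k)\wedge B(p^{v+1}m'))^{t\T}$ directly, using Corollary~\ref{cor frobenius is an isomorphism fur suff large} (Frobenius is a $\pi_*$-isomorphism in degrees $\geq 2m+1$) together with the inductive knowledge of $(\thh(k)\wedge B(p^{v}m'))^{h\T}$, and then extends to all degrees via the $2$-periodicity coming from the $H\Z^{t\T}$-module structure; the higher differential is read off \emph{a posteriori} as the unique pattern consistent with this answer and multiplicativity. You cite Speirs as the model and that is indeed where to look, but the thing to import is this Frobenius induction, not "multiplicativity together with B\"okstedt periodicity." Finally, the hidden $p$-extensions are not resolved by a vague appeal to the $\tp_*(k)$-module structure; the paper exhibits a $\T$-equivariant map $\thh(k)[2m+1]\to\thh(k)\wedge B(m)$ hitting $z_m$ and uses $W(k)$-linearity to show the odd homotopy fixed-point groups are cyclic $W(k)$-modules, which together with the length of the $E^\infty$-filtration pins them down.
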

\begin{proof}
This proof is a direct adaptation of \cite[Proposition 12]{speirs2020truncatedpolynomial}. We start with the case $p \nmid m$. In this case, 
\[\thh(k)^{tC_p} \wdg B(m)^{tC_p} \simeq 0\]
due to Lemma \ref{lem frobenius are pcompletions}. The lax monoidal structure map of the functor $(-)^{tC_p}$ is an equivalence in this situation due to Lemma \ref{lem bm is a finite cp space} and \cite[Lemma 15]{speirs2019ktheorycoordinate}. We conclude that 
\[(\thh(k) \wdg B(m))^{tC_p} \simeq 0.\]
Since $\thh(k) \wdg B(m)$ is $p$-complete (for every $m$), $(\thh(k) \wdg B(m))^{t\T}$ is also $p$-complete \cite[Section 2.3]{bhatt2019thhandintegralpadichodge}. Therefore we have 
\[
(\thh(k) \wdg B(m))^{t\T} \simeq ((\thh(k) \wdg B(m))^{tC_p})^{h\T} \simeq 0.
\]
due to Lemma \RomanNumeralCaps{2}.4.2 of \cite{nikolausscholze2018topologicalcyclic}.

Recall that the second page of the Tate spectral sequence for $(\thh(k) \wdg B(m))^{t\T}$ is given by 
\[
 E^2 = k[t,t^{-1},x_2] \{y_m,z_m\}.
\]
Considering Lemma \ref{lem trivial differentials on the tate spectral sequence} together with the multiplicative structure, one sees that the first non-trivial differential on $y_m$ determines the rest of the non-trivial differentials on this spectral sequence. In this case, we have $E^\infty = 0$ and therefore up to a unit, we have
\[d^2y_m = tz_m\]
which is the only differential that guarantees this. See the following picture of the $E^2$-page.

\begin{tikzpicture}
\matrix (m) [matrix of math nodes,
             nodes in empty cells,
             nodes={minimum width=6.95ex,
                    minimum height=7ex,
                    outer sep=-5pt},
             column sep=-0.3ex, row sep=-3.9ex,
             text centered,anchor=center]{
 \vdots  &   \strut  &     &     &  & \vdots &
           & &&\\
2m+3 &   & t^2x_2z_m    & &tx_2z_m & & x_2z_m &  & t^{-1}x_2z_m &\\
2m+2 &   & t^2x_2y_m    & &tx_2y_m & &x_2 y_m & & t^{-1}x_2y_m &&\\
2m+1 & \cdots  & t^2z_m    & &tz_m & & z_m & & t^{-1}z_m &\cdots\\
2m &   & t^2y_m    & &ty_m & & y_m & & t^{-1}y_m &\\
 \vdots  &     &    0&    &  0   &    &0 &&0\\
\quad\strut &   \cdots &  4  &    &  2 &    &  0 &         & -2 &\cdots \\
};
   \draw[-stealth] (m-5-7) -- (m-4-5);
 %  \draw[-stealth] (m-4-7) -- (m-3-5);
   \draw[-stealth] (m-3-7) -- (m-2-5);
 %  \draw[-stealth] (m-2-7) -- (m-1-5);
   
      \draw[-stealth] (m-5-5) -- (m-4-3);
 %  \draw[-stealth] (m-4-5) -- (m-3-3);
   \draw[-stealth] (m-3-5) -- (m-2-3);
 %  \draw[-stealth] (m-2-5) -- (m-1-3);
   
      \draw[-stealth] (m-5-9) -- (m-4-7);
 %  \draw[-stealth] (m-4-9) -- (m-3-7);
   \draw[-stealth] (m-3-9) -- (m-2-7);
 %  \draw[-stealth] (m-2-9) -- (m-1-7);
   
      %  \draw[-stealth] (m-6-8) -- (m-3-5);
   %     \draw[-stealth] (m-5-9) -- (m-2-6);
%\draw[thick] (m-1-6.north east) -- (m-12-6.east) ;
\draw[thick] (m-7-1.north) -- (m-7-10.north east) ;
\end{tikzpicture}

We showed that $\pi_k((\thh(k) \wedge B(m))^{t \T})$ vanishes for each $k$. We now truncate the Tate spectral sequence, removing the first quadrant to obtain the homotopy fixed point spectral sequence. In this case, the classes $x_2^n z_m$  will no longer be hit by any differentials and so survive to the $E^\infty$ page; hence we conclude that $E^\infty= k[x_2]\{z_m\}$, where $z_m$ is of degree $2m +1$.

Now suppose that $m=p^v m'$, where $(p,m')=1$. We use an induction argument on $v$. Suppose then that the claim is true for all integers less than or equal to $v$. Recall, from Corollary \ref{cor frobenius is an isomorphism fur suff large} that the Frobenius 
$$
\phi_m^{h \T}: (\thh(k) \wedge B(p^v m'))^{h\T} \to (\thh(h) \wedge B(p^{v +1} m'))^{t \T}   
$$
will be a $\pi_*$ isomorphism in sufficiently high degrees. Using this fact, we may conclude that  
\begin{equation}\label{eq homotopy of the tate in the inductive argument}
\pi_{2 r +1} (\thh(k) \wedge B(p^{v+1} m'))^{t \T} \cong W_{v +1}(k)
\end{equation}
for sufficiently large $r$ and that the even homotopy groups are trivial in sufficiently large degrees. Since $(-)^{t\T}$ is lax monoidal and $\thh(k)$ is a $\T$-equivariant $H\Z$-algebra  \cite[\RomanNumeralCaps{4}.4.10]{nikolausscholze2018topologicalcyclic}, we deduce that $(\thh(k) \wedge B(p^{v+1} m'))^{t \T}$ is an $H\Z^{t\T}$-module. It follows from the Tate spectral sequence that  
\[\pi_*\Z^{t\T} = \Z[t,t^{-1}]\]
with $\lv t \rv=2$. From this, we deduce that $(\thh(k) \wedge B(p^{v+1} m'))^{t \T}$ is periodic in homotopy and this shows that the equality in \ref{eq homotopy of the tate in the inductive argument} holds for every $r$ and that the even homotopy groups are trivial.
%Indeed, the element $t \in E^2_{-2,0}$ survives to the $E^\infty$ page of the spectral sequence by Lemma \ref{lem trivial differentials on the tate spectral sequence}. Hence, $\pi_*(\thh(k) \wedge B(p^v))^{t \T}$ is periodic, via the action of 
%$$
%u \in \pi_{-2} \tp(k),
%$$   
%which is itself the image of $t$ in the $E^\infty$-page of the spectral sequence  calculating $\tp(k)$. 

It now remains to compute 

$$
\pi_*(\thh(k) \wedge B(p^{v+1}m'))^{h \T}
$$
First, we observe that in the Tate spectral sequence, 
$$
d^{2v+2}(y_{p^{v+1}m'})= t(x_2t)^v z_{p^{v+1}m'}
$$
at least up to multiplication by a unit. Since this spectral sequence is multiplicative, this is the only differential that guarantees \eqref{eq homotopy of the tate in the inductive argument}.  There are no other non-trivial differentials. 

This completes the calculation of $\pi_{2r+1}(\thh(k) \wedge B(m))^{h \T}$, up to extension problems. Note that we only need to do this for the homotopy fixed point calculation. So we reduce the homotopy fixed point case as in  \cite[Proposition 12]{speirs2019ktheorycoordinate}. This will follow from us showing that $\pi_{2r}(\thh(k) \wedge B(m))^{h\T}$ is cyclic (and so completely determined by its length, which we may extract from the  $E^{\infty}$ page) as a $\pi_0 \thh(k)^{h\T} \cong W(k)$-module. The argument proceeds by exhibiting a $\T$-equivariant map
$$
\thh(k)[2m +1] \to \thh(k) \wedge B(m)
$$
which induces a map of homotopy spectral sequences. Choose an element 
$$\alpha \in \pi_{2r+1}(\thh(k) \wedge B(m))^{h \T},
$$
and let $\overline{\alpha}= t^{a}x^a z_m$ be its image in the $E^\infty$  page. This lies in the image of $t^a x^a$ in the $E^\infty$ page of the spectral sequence computing $\thh(k)^{h \T}$ where the extension problem has been solved; thus $p^a$ lifts $t^a x^a$ up to a unit. Since the map 

$$
\thh(k)[2m +1] \to \thh(k) \wedge B(m)
$$
corresponding to $z_m$ is $W(k)$ linear, one concludes that $\alpha = p^a z_m$. 

Hence we have shown that 
\begin{equation*}
\pi_{2r+1}(\thh(k) \wdg B(m))^{h\T} = 
\begin{cases}
                                   W_{v+1}(k) & \text{if $m \leq r$} \\
                                   W_{v}(k) & \text{if $r < m$}
  \end{cases}
\end{equation*}
It only remains to verify that 
$$
can_*: \pi_{2r+1} (\thh(k) \wdg B(m))^{h \T} \to \pi_{2r +1}(\thh(k) \wdg B(m))^{t \T} 
$$
is an isomorphism for $r< m$. This follows from the fact that $can$ induces a morphism of spectral sequences from the homotopy fixed point to the tate spectral sequence; in these degrees, the induced maps on the $E^\infty$ page are equivalences as there will be no contribution there from the $1$st quadrant terms. 
\end{proof}

The relative topological cyclic homology  $\tc(X,k)$ is the fiber of the map 
\[\tc^-(X,k)\xrightarrow{\phi - can} \tp(X,k).\]
With the splitting in Proposition \ref{prop negative tc and tp split}, this is the fiber of the following map.

\[
\prod_{\substack{1\leq m^\prime \\ p \nmid m^\prime}} \prod_{0 \leq v} \tc^-(X)(p^vm^\prime) \xrightarrow{\phi - can} \prod_{\substack{1\leq m^\prime \\ p \nmid m^\prime}} \prod_{0 \leq v} \tp(X)(p^vm^\prime)
\]
Recall that $\phi$ increases the weight degree by a factor of $p$, see Lemma \ref{lem frobenius also split}. 

We claim that, $\phi-can$ is surjective in homotopy groups. To see this, fix an $m^\prime$ with $p\nmid m^\prime$ and consider the degree $2r+1$ homotopy groups of the factors corresponding to this $m^\prime$. If $r< m^\prime$, then $can$ is an isomorphism due to Proposition \ref{prop calculation of tate and homotopy fixed points}. This shows that $\phi-can$ is surjective in homotopy since $\phi$ increases the weight degree by a factor of $p$. If $m^\prime \leq r$, then the surjectivity for $p^vm^\prime \leq r$ follows by the fact that $\phi$  is an isomorphism in these cases and that $\tp(m') \simeq 0$, see Corollary \ref{cor frobenius is an isomorphism fur suff large}. For $v$ with $p^v m^\prime >r$, surjectivity is observed by noting that $can$ is an isomorphism in these cases.

 All the non-trivial homotopy groups are in odd degrees due to Proposition \ref{prop calculation of tate and homotopy fixed points}. This, together with the surjectivity of $\pi_*(\phi-can)$ show that for every positive $m'$ with $p\nmid m'$, there is a short exact sequence 

\[0 \to \tc_*(X)(m') \to \prod_{0 \leq v }\tc^-_*(X)(p^vm^\prime)\to \prod_{0 \leq v }\tp_*(X)(p^vm^\prime) \to 0\]
where 
\[\tc_*(X,k) = \prod_{\substack{1\leq m^\prime \\ p \nmid m^\prime}}\tc_*(X)(m')\]
with this notation.

Restricting to degree $2r+1$, we obtain the following diagram where the horizontal lines are short exact sequences. 
\begin{equation*}
    \begin{tikzcd}[row sep=normal, column sep = small]
    0 \ar[r]&\ar[d,"\phi -can"]\prod_{s \leq v}W_v(k) \ar[r]& \ar[d,"\phi-can"] \prod_{0 \leq v }\tc^-_{2r+1}(X)(p^vm^\prime) \ar[r]&\ar[d,"\overline{\phi-can}"] \prod_{0 \leq v < s} W_{v+1}(k) \ar[r] &0 \\
        0 \ar[r]&\prod_{s \leq v}W_v(k) \ar[r]& \prod_{0 \leq v }\tp_{2r+1}(X)(p^vm^\prime) \ar[r] &\prod_{0 \leq v < s} W_{v}(k) \ar[r] &0 
    \end{tikzcd}
\end{equation*}
Here, $s$ is the smallest non-negative integer $v$ such that $r<p^vm'$. The vertical map on the left hand side is an isomorphism due to Proposition \ref{prop calculation of tate and homotopy fixed points} and the fact that $\phi$ increases the weight degree. Therefore, the left hand side does not contribute to the cyclic homology groups. 

Now we consider the right hand vertical map. Due to Corollary \ref{cor frobenius is an isomorphism fur suff large}, $\phi$ is an isomorphism on  $W_{v+1}(k)$ for each $v$ except for $v = s-1$ where it is trivial because its image lies in the first part of the short exact sequence. The kernel of the right hand side is given by $W_s(k)$. Indeed, the kernel  is the image of an injective map 
\[f \co W_s(k) \to \prod_{0 \leq v < s} W_{v+1}(k)\]
given by the identity map for $v= s-1$. The rest of this map is defined via a downward induction on $v$. Given a map to $W_{v+1}$, the map to $W_{v}$ is given by 
\[W_{s}(k)\to W_{v+1}(k) \xrightarrow{can} W_{v}(k) \xrightarrow{\phi^{-1}}W_v(k)\]
where the first map is the given map. In conclusion, the contribution to $\tc_{2r+1}(X)$ from $m'$ is given by $W_s(k)$.

We deduce that 
\[\tc_{2r+1}(X,k) = \prod_{\substack{1\leq m^\prime \leq r \\ p \nmid m^\prime}} W_s(k) \textup{\ and \ } \tc_{2r}(X,k)=0 \]
for every integer $r$. It follows from Proposition 1.10 and Example 1.11 of  \cite{hesselholt2015thebigderhamwittcomplex}
the the right hand side is indeed the big Witt vectors  $\mathbb{W}_r(k)$ of length $r$. This finishes the proof of Theorem \ref{thm algebraic k theory of thh k}.
%{m^\prime \geq 1,\\ p \nmid m^\prime}\prod_{v \geq 0} \tc^-(X)(p^vm')\]
\section{Algebraic $K$-theory of $\tc(\f_p)$} \label{sec algebraic k theory of tc fp}
 
In this section, we show that there is an equivalence of spectra
\[K(\tc(\fp)) \simeq K(\z_p).\] 
We start by proving the following classification result. 
 \begin{prop} \label{prop unique dga with exterior homology}
 Let $R$ be a commutative ring. There is a unique $R$-DGA with homology ring $\Lambda(x_{-1})$, i.e.\ the exterior algebra over $R$ with a single generator in degree $-1$.
 \end{prop}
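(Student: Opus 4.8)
The plan is to mimic the strategy used in Theorem \ref{thm e2 dgas with polynomial homology} and Lemma \ref{lem unique exterior dga}: set up a Postnikov tower and kill the obstructions using André--Quillen cohomology, but now in the category of associative ($E_1$) $R$-DGAs rather than $E_2$ $Hk$-algebras. Write $\Lambda = \Lambda(x_{-1})$, a $\Z$-graded ring with $\Lambda_0 = R$, $\Lambda_{-1} = R\cdot x_{-1}$, and $\Lambda_i = 0$ otherwise. Since the homology is concentrated in degrees $0$ and $-1$, the Postnikov tower (in the coconnective direction) has only two nontrivial stages: the truncation in degrees $\geq 0$ is just $HR$, and the full algebra is a single square-zero-type extension of $HR$ by $\Sigma^{-1} HR$ (as an $HR$-bimodule, $x_{-1}$ generates a free rank-one module). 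So there is essentially one $k$-invariant to analyze, living in the appropriate topological André--Quillen / Hochschild cohomology group $\mathrm{HH}^{*}_{R}(HR; \Sigma^{?} HR)$ classifying such extensions.

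First I would record that any $R$-DGA $A$ with $H_*(A) \cong \Lambda(x_{-1})$ comes equipped with a canonical map $A \to HR$ (the truncation $\tau_{\leq 0}$, using that $H_i(A) = 0$ for $i > 0$ and $H_0(A) = R$), realizing $A$ as a square-zero-ish extension: the fiber of $A \to HR$ is $\Sigma^{-1}HR$ as an $A$-bimodule, hence as an $HR$-bimodule it is the free bimodule of rank one placed in degree $-1$. Such extensions of $HR$ by a bimodule $M$ are classified by $\mathrm{THH}$-cohomology / topological André--Quillen cohomology of the $E_1$ $R$-algebra $HR$ with coefficients in $M = \Sigma^{-1} HR^{e}$-module, i.e.\ by a group of the form $\mathrm{AQ}^{*}_{E_1/R}(HR; \Sigma^{-1} HR)$. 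But the $E_1$ André--Quillen (co)homology of $HR$ relative to $R$ vanishes, exactly as in the proof of Lemma \ref{lem unique exterior dga}: one builds it from the fiber of the augmentation/identity map $HR \to HR$, which is contractible, so all these cohomology groups are zero. Therefore the $k$-invariant is forced to be trivial, and every such $A$ is equivalent to the trivial (split) square-zero extension $HR \oplus \Sigma^{-1} HR$, which is precisely the formal DGA $\Lambda(x_{-1})$; uniqueness follows.

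The main obstacle, and the step I would be most careful about, is the bookkeeping of \emph{which} obstruction theory applies and that its relevant groups genuinely vanish. There are two subtleties: (i) $\Lambda(x_{-1})$ is coconnective, so one is running Postnikov/obstruction theory "downward," and one must make sure the Basterra--Mandell style machinery (or its $E_1$ analogue, e.g. via the cotangent complex of an associative algebra, or Lazarev's / Dugger--Shipley rectification of DGAs) is available in this range — connectivity hypotheses are where such arguments usually break, and I would either cite a coconnective version or reduce to the connective case by a suitable shift/duality. (ii) One must check that the bimodule over which the extension is taken is indeed the free rank-one bimodule (so that the classifying cohomology is the "absolute" $\mathrm{AQ}^*$ of $HR$ over $R$, which vanishes) and not some twisted bimodule — this is forced by the fact that $H_{-1}(A)$ is free of rank one over $H_0(A) = R$ with the obvious left and right actions, since $R$ is central. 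Once these two points are pinned down, the vanishing of $\mathrm{AQ}^*_{E_1/R}(HR; -)$ — which holds for the same reason as in Lemma \ref{lem unique exterior dga}, the cotangent complex of $HR$ over itself being zero — closes the argument, and the ambient rectification results in the appendix let one upgrade the $\infty$-categorical statement to an honest statement about $R$-DGAs.
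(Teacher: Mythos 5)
There is a genuine gap, and it occurs at the very first step of your reduction. You assert that any $R$-DGA $A$ with $H_*(A)\cong\Lambda(x_{-1})$ ``comes equipped with a canonical map $A\to HR$ (the truncation $\tau_{\leq 0}$)''. Since $A$ is coconnective, $\tau_{\leq 0}A\simeq A$, so this truncation gives nothing; the truncation whose homotopy is $R$ concentrated in degree $0$ is the connective cover $\tau_{\geq 0}A\simeq HR$, and its structure map points the wrong way, $HR\to A$. So the augmentation $A\to HR$ on which your whole square-zero picture rests is not supplied by any truncation functor and would have to be constructed --- which is essentially as hard as the statement being proved. Second, even granting an augmentation with fiber $\Sigma^{-1}HR$, the claim that $A$ is then a square-zero extension of $HR$ classified by (vanishing) $E_1$ Andr\'e--Quillen cohomology invokes the Postnikov-type theorems of Basterra--Mandell/Lurie, whose hypotheses are connectivity hypotheses on the algebras, not just on the fiber. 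You flag this yourself, but the proposed fixes do not exist in any straightforward form: shifting destroys the algebra structure, and there is no elementary duality reducing a coconnective $E_1$-algebra to a connective one (the Bayındır--P\'eroux argument over a field goes through Koszul/Spanier--Whitehead duality with coalgebras precisely because this step is substantive). So the two load-bearing steps --- existence of the augmentation and the square-zero identification --- are both unproven in the range where you need them.

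For comparison, the paper avoids Postnikov towers here altogether, exactly because of the coconnectivity issue. Since $\pi_*A\cong\Lambda(x_{-1})$ is free as a graded $R$-module, any two such algebras are already isomorphic as monoids in the homotopy category of $HR$-modules; one then applies Hopkins--Miller obstruction theory to rigidify this isomorphism to an equivalence of $E_1$ $HR$-algebras. The obstructions lie in derivation/Hochschild cohomology groups of $\Lambda(x_{-1})$, which after the change-of-rings trick reduce to $\textup{Ext}^{s+2}_{\Lambda(x_{-1})}(R,\Omega^s(R\oplus\Omega R))$, and these vanish for degree reasons using the standard periodic resolution of $R$ over $\Lambda(x_{-1})$. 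That argument needs no connectivity at all, only freeness of the homotopy groups, which is why it succeeds where the square-zero/Postnikov strategy stalls. If you want to salvage your approach, you would need to first produce the augmentation (e.g.\ by an obstruction argument of its own) and then prove a coconnective analogue of the ``fiber in a single degree implies square-zero'' theorem; both are real tasks, not bookkeeping.
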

 
 The first author and P\'eroux prove this result when $R$ is a field \cite{bayindirperoux}. We describe how their proof generalize to commutative rings. 
 
 Let $X$ be the $HR$-algebra corresponding to the formal $R$-DGA with homology ring $\Lambda(x_{-1})$ and let $E$ be the $HR$-algebra corresponding to another $R$-DGA with homology ring $\Lambda(x_{-1})$. We need to show that $E$ is weakly equivalent to $X$ as an $HR$-algebra. 
 
 For this, we use Hopkins--Miller obstruction theory which provides obstructions to lifting a map of monoids in the homotopy category of $HR$-modules to a map of $HR$-algebras \cite{rezk1998notes}. 
 
 Since homotopy groups of $E$ and $X$ are free as graded $R$-modules, maps between $E$ and $X$ in the homotopy category of $HR$-modules is simply given by maps of homotopy groups that are maps of graded $R$-modules, see \cite[\RomanNumeralCaps{4}.4.1]{elmendorf2007rings}. Again by \cite[\RomanNumeralCaps{4}.4.1]{elmendorf2007rings}, smash powers of $X$ and $E$ have homotopy given by tensor products in graded $R$-modules. From these, we conclude that $X$ and $E$ are isomorphic as monoids in the homotopy category of $HR$-modules. 
 
Again because $\pi_*X$ and $\pi_*E$ are free as $R$-modules, we can apply Hopkins-Miller obstruction theory to lift this isomorphism to a weak equivalence of $HR$-algebras. The obstructions to this lift lies in the Andr\'e--Quillen cohomology groups 
\[\textup{Der}^{s+1}(\Lambda(x_{-1}), \Omega^{s}\Lambda(x_{-1})) \textup{\ for \ } s\geq 1.\]
It follows by \cite[3.6]{quillen1968cohomologyofcommutativerings} that these cohomology groups are equivalent to the Hochschild cohomology groups 
\[
\textup{Ext}_{\Lambda(x_{-1}) \otimes \Lambda(x_{-1})^{op}}^{s+2}(\Lambda(x_{-1}), \Omega^{s}\Lambda(x_{-1})) \textup{\ for \ } s\geq 1.
\]

There is an automorphism of $\Lambda(x_{-1}) \otimes \Lambda(x_{-1})^{op}$ given by 
\[x_{-1} \otimes 1 \to x_{-1} \otimes 1 \textup{\ and \ } 1 \otimes x_{-1} \to x_{-1} \otimes 1 - 1 \otimes x_{-1}.\]
Precomposing with this automorphism makes the action of the second factor of  $\Lambda(x_{-1}) \otimes \Lambda(x_{-1})^{op}$  on $\Lambda(x_{-1})$ trivial  and the action of the first factor the canonical nontrivial one. Via base change with respect to the inclusion of the second factor of $\Lambda(x_{-1}) \otimes \Lambda(x_{-1})^{op}$, one sees that these obstructions lie in the groups
\[
\textup{Ext}_{ \Lambda(x_{-1})}^{s+2}(R, \Omega^{s}(R \oplus \Omega R)) \textup{\ for $s \geq 1$.}\]
Using the standard resolution of $R$ as a  $\Lambda(x_{-1})$-module it is clear that these groups are trivial due to degree reasons. 

This shows in particular that $\tc(\fp)$ and $C^*(S^1,\z_p)$ are both formal as $E_1$ $H\z_p$-algebras and therefore quasi-isomorphic to each other. It follows from the theorem of the heart in \cite[4.8]{antieau2019ktheoreticobstructionstobddtstructures} that the  algebraic $K$-theory of $C^*(S^1,\Z_p)$ and therefore $\tc(\fp)$  is $\kth(\z_p)$.
\begin{rem}
 For a perfect field $k$ of characteristic $p$, Hesselholt and Madsen identify the topological cyclic homology of $k$ as  
 \[\tc(k) \simeq H\z_p \vee \Sigma^{-1}H(\textup{coker}(F-1))\]
 where $F$ denotes the Frobenius map on the ring of Witt vectors of $k$ \cite[Theorem B]{hesselholtwittvectors1997k}. If $k$ is finite, then  $\textup{coker}(F-1) = \z_p$. Therefore, Proposition \ref{prop unique dga with exterior homology} applies to show that $\tc(k) \simeq  C^*(S^1,\Z_p)$. This, together with \cite[4.8]{antieau2019ktheoreticobstructionstobddtstructures}, implies that 
 \[\kth(\tc(k)) \simeq \kth(\z_p)\]
 for every finite field $k$.
 If $k$ is an algebraically closed field of characteristic $p$, then $\textup{coker}(F-1) =0$ and again, this identifies  $\kth(\tc(k))$ with $ \kth(\z_p)$.
\end{rem}
 
 \section{Appendix A. Rectification of cyclic and graded objects}
In this short appendix, we prove rectification results regarding cyclic objects in various $\infty$-categories which we use in the main part of the text. 

We first recall the following compatibility between realization of a cyclic object using point-set models and in the $\infty$-categorical setting. In what follows, $N(-)$ denotes the nerve functor,  $N\mathbb{T}-Top$ denotes the nerve of the category of $\mathbb{T}$-equivariant topological spaces, and the vertical maps are the Dwyer-Kan localization maps.      
\begin{prop}[Nikolaus-Scholze] \label{prop rectification cyclic spaces to cyclic spectra}
Let $Top$ denote the category of  compactly generated weak Hausdorff spaces. The diagram
\begin{equation} 
 \begin{tikzcd}
 N(\on{Fun}(\Lambda^{op}, Top))_{prop} \ar[r, "|-| "] \ar[d]& N \T-Top \ar[d]\\
 \on{Fun}( \Lambda^{op}, \mathcal{S}) \ar[r,"\colim"]& \mathcal{S}^{B \T}
 \end{tikzcd}
 \end{equation}
 commutes.\footnote{The $(-)_{prop}$ subscript means we restrict to cyclic spaces whose underlying simplicial space is \emph{proper}. A simplicial space is proper if for each $n$, the inclusion $\cup_{i=0}^{n-1}s_i(X_{n-1}) \hookrightarrow X_n $ is a Hurewicz cofibration.} 
\end{prop}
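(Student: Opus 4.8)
The plan is to exhibit the top horizontal functor as a left Quillen functor — restricted to proper objects, on which it agrees with its left derived functor — to invoke the general fact that the left derived functor of a left Quillen functor models the functor it induces on Dwyer--Kan localizations, and then to identify this induced functor with the bottom horizontal arrow.

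To begin, I would fix model structures. On $\on{Fun}(\Lambda^{op}, Top)$ I would use the Reedy model structure coming from the generalized Reedy structure on the cyclic category $\Lambda$; its cofibrant objects are, up to levelwise weak equivalence, exactly the proper cyclic spaces, and its Dwyer--Kan localization is $\on{Fun}(\Lambda^{op}, \mathcal{S})$. On $\T\text{-}Top$ I would use the Borel (``mixed'') model structure, whose weak equivalences and fibrations are detected on underlying spaces, so that its localization is $\mathcal{S}^{B\T}=\on{Fun}(B\T,\mathcal{S})$. The classical fact that the geometric realization of a cyclic space carries a natural continuous $\T$-action — made transparent by the coend description $|X_\bullet|\cong\int^{[n]\in\Lambda}|\Lambda[n]|\times X_n$ against the standard cocyclic $\T$-space $[n]\mapsto|\Lambda[n]|$, cf.\ \cite[Section 1]{bokstedt1993cyclotomictraceandalgebraicktheoryofspaces} and \cite[Appendix B]{nikolausscholze2018topologicalcyclic} — promotes $|-|$ to a functor $\on{Fun}(\Lambda^{op},Top)\to\T\text{-}Top$. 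Being a coend, it is a left adjoint, and one checks directly that it sends Reedy (trivial) cofibrations to (trivial) cofibrations in the Borel model structure, hence is left Quillen; on proper cyclic spaces it therefore computes $\mathbb{L}|-|$, and in particular carries levelwise weak equivalences between proper cyclic spaces to $\T$-equivariant weak equivalences, so the clockwise composite in the diagram descends along the left vertical localization.

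It remains to identify the induced functor $\on{Fun}(\Lambda^{op},\mathcal{S})\to\mathcal{S}^{B\T}$ with the bottom arrow. Both functors preserve colimits — the former because left Quillen functors derive to colimit-preserving functors, the latter because it is (after the standard identifications $\Lambda^{op}\simeq\Lambda$, $B\Lambda\simeq B\T$ of Connes) the left Kan extension along the Yoneda embedding of $\Lambda^{op}\xrightarrow{p}B\T\hookrightarrow\mathcal{S}^{B\T}$, which is the construction underlying the $\T$-action on the $\infty$-categorical realization of cyclic objects in \cite[Appendix B]{nikolausscholze2018topologicalcyclic}. Since $\on{Fun}(\Lambda^{op},\mathcal{S})=\mathcal{P}(\Lambda)$ is generated under colimits by the representables $\Lambda[n]$, it suffices to compare the two functors on these, compatibly in $[n]$: one must check that $|\Lambda[n]|$ with its point-set $\T$-action is, functorially in $[n]$, the free $\T$-space $p([n])$. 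This reduces to Connes' computation that $|\Lambda[0]|\cong S^1$ with its rotation action, together with the observation that $|\Lambda[n]|$ is $\T$-equivariantly equivalent to it (both are free $\T$-CW complexes with contractible orbit space).

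The step I expect to be the main obstacle is precisely this last comparison: pinning down the point-set $\T$-action on the cocyclic space $[n]\mapsto|\Lambda[n]|$ and matching it \emph{coherently in $n$} with the abstract functor $p$, rather than merely objectwise. A clean route is to note that, after localization, a functor $\Lambda\to\mathcal{S}^{B\T}$ landing in free $\T$-spaces with contractible orbit spaces is determined up to contractible choice by the induced map of orbit spaces $\Lambda\to\mathcal{S}$, which here is the constant functor; this rigidity is exactly what makes the point-set model and the $\infty$-categorical construction of \cite[Appendix B]{nikolausscholze2018topologicalcyclic} agree. The properness hypothesis is used only to ensure that the point-set coend computes the homotopy coend, so that $|-|$ restricted to proper cyclic spaces genuinely models $\mathbb{L}|-|$.
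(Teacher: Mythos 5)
The paper does not actually prove this statement; it cites it directly as Corollary B.14 of \cite{nikolausscholze2018topologicalcyclic}, so your blind attempt is being measured against the genuine argument in that reference rather than against anything written here. Your overall strategy — present $|-|$ as a left Quillen functor out of the generalized Reedy structure on cyclic spaces, invoke derived functor comparison, then identify the induced functor on localizations by checking on representables $\Lambda[n]$ — is the right shape and does mirror the structure of the Nikolaus--Scholze argument in Appendix B. However, two steps you flag as minor are precisely where the real content lies, and your treatment of both has gaps.

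First, the assertion that $|-|:\on{Fun}(\Lambda^{op},Top)\to\T\text{-}Top$ sends Reedy (trivial) cofibrations to Borel (trivial) cofibrations is stated as something ``one checks directly,'' but this is not a formality. What is classical is that realization of a \emph{simplicial} space is left Quillen to $Top$; promoting this to a $\T$-equivariant statement for the generalized Reedy structure on $\Lambda^{op}$, with the continuous $\T$-action built in, is one of the points the NS appendix is devoted to establishing (via the cocyclic $\T$-space $[n]\mapsto|\Lambda[n]|$ and the latching/matching analysis for the generalized Reedy category $\Lambda$). Also, ``Reedy cofibrant $\Leftrightarrow$ proper up to levelwise equivalence'' conflates the Quillen and Str\o m (Hurewicz) model structures on $Top$: proper simplicial spaces are the Reedy cofibrant objects for the Hurewicz structure, which is what makes realization homotopy invariant, but that is a different model structure from the one you would most naturally transfer to get $\mathcal{S}$ as the localization. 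One needs to reconcile these explicitly.

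Second, and more seriously, the ``rigidity'' argument at the end is false as stated. The full subcategory of $\mathcal{S}^{B\T}$ on free $\T$-spaces with contractible orbit space is an $\infty$-groupoid equivalent to $B\T$, so a functor $\Lambda\to\mathcal{S}^{B\T}$ landing in it is the same datum as a map $B\Lambda\to B\T$. Since $B\Lambda\simeq B\T\simeq\mathbb{CP}^\infty$, the space of such maps is $\Map(\mathbb{CP}^\infty,\mathbb{CP}^\infty)$, which has $\pi_0\cong\Z$; it is nowhere near contractible, and the induced functor of orbit spaces $\Lambda\to\mathcal{S}$ is the constant functor at a point for \emph{every} such lift, so it carries no information. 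To finish the comparison you must actually verify that the point-set cocyclic $\T$-space $[n]\mapsto|\Lambda[n]|$ and the abstract functor $p:\Lambda^{op}\to B\T\hookrightarrow\mathcal{S}^{B\T}$ represent the \emph{same} class in $\pi_0\Map(B\Lambda,B\T)\cong\Z$, i.e.\ that both induce the Connes equivalence $B\Lambda\simeq B\T$ rather than a map of some other degree. That coherence check is the heart of the proposition and is not supplied by the argument you give.

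\providecommand{\bysame}{\leavevmode\hbox to3em{\hrulefill}\thinspace}
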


\begin{proof}
This is Corollary B.14 in \cite{nikolausscholze2018topologicalcyclic}. 
\end{proof}

\noindent We shall need follow pointed analogue of this result.  

\begin{lem}\label{lem cyclic ptd spaces to cyclic spectra}
Let $Top_*$ denote the category of pointed compactly generated weak Hausdorff spaces. The diagram
\begin{equation} 
 \begin{tikzcd}
 N(\on{Fun}(\Lambda^{op}, Top_*))_{prop} \ar[r,"|-|"] \ar[d]& N \T-Top_* \ar[d]\\
 \on{Fun}( \Lambda^{op}, \mathcal{S}_*) \ar[r,"\colim"]& \mathcal{S}_*^{B \T}
 \end{tikzcd}
 \end{equation}
 commutes. 
\end{lem}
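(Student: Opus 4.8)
The plan is to deduce the pointed statement (Lemma \ref{lem cyclic ptd spaces to cyclic spectra}) from the unpointed one (Proposition \ref{prop rectification cyclic spaces to cyclic spectra}) by exploiting the fact that pointed objects are just objects equipped with a map from the terminal object, and that geometric realization and colimits commute with the relevant forgetful functors. First I would observe that there is a commuting square relating the pointed and unpointed versions: on the model-category side, the forgetful functor $Top_* \to Top$ creates the cyclic structure, the properness condition, and the geometric realization, since $|-|$ for pointed spaces is computed by realizing the underlying simplicial space and recording the realization of the (constant) basepoint subobject; on the $\infty$-categorical side, the forgetful functor $\mathcal{S}_* \to \mathcal{S}$ preserves and detects colimits indexed by weakly contractible diagrams — and $\Lambda^{op}$ has weakly contractible nerve, $B\T \simeq \mathbb{CP}^\infty$, so in fact all the relevant colimits are preserved. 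Thus the pointed diagram sits over the unpointed diagram of Proposition \ref{prop rectification cyclic spaces to cyclic spectra} via a natural transformation of squares, and each face of the resulting cube other than the front (pointed) face is either Proposition \ref{prop rectification cyclic spaces to cyclic spectra} itself (the back face) or a compatibility square for a forgetful functor (the four side faces).

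The key steps, in order, are: (1) set up the cube with the unpointed square of Proposition \ref{prop rectification cyclic spaces to cyclic spectra} as the back face and the desired pointed square as the front face, with vertical forgetful maps $Top_* \to Top$ (resp. $\mathcal{S}_* \to \mathcal{S}$, and their cyclic/equivariant variants) connecting them; (2) check that the forgetful functor $N(\on{Fun}(\Lambda^{op}, Top_*))_{prop} \to N(\on{Fun}(\Lambda^{op}, Top))_{prop}$ is well-defined, i.e. a proper pointed cyclic space has proper underlying cyclic space — immediate, since properness is a condition on the underlying simplicial space; (3) check that $|-|$ in $Top_*$ is compatible with $|-|$ in $Top$ under this forgetful functor (true: the realization of a pointed simplicial space is the realization of the underlying simplicial space with the realized basepoint, and the $\T$-action is the same); (4) check that $\colim_{\Lambda^{op}} \colon \on{Fun}(\Lambda^{op},\mathcal{S}_*) \to \mathcal{S}_*^{B\T}$ is compatible with the unpointed colimit under $\mathcal{S}_* \to \mathcal{S}$ — here I would use that $\mathcal{S}_*$ is the $\infty$-category of pointed objects, $\mathcal{S}_* = \mathcal{S}_{*/}$, and that for an undercategory $\mathcal{D}_{d/}$ the forgetful functor $\mathcal{D}_{d/} \to \mathcal{D}$ preserves $\kappa$-filtered colimits and, more relevantly, all weakly contractible colimits (e.g. \cite[4.4.2.9]{lurie2009higher}); since $B\Lambda$ and $B\T$ are weakly contractible this covers both the realization and the restriction along $\Lambda^{op} \to B\T$; (5) likewise, the Dwyer–Kan localization maps commute with the forgetful functors, since $Top_* \to Top$ sends weak equivalences to weak equivalences and $\mathcal{S}_*$ is the localization of $N(Top_*)$ at the weak equivalences (same weak equivalences as in $Top$); (6) conclude that the front face commutes because all other faces of the cube commute and the vertical forgetful functors are conservative enough — more precisely, a pointed space (resp. pointed $\T$-space) is determined by its underlying object together with the basepoint map, and both pieces are matched by the already-established commutativities, so the two composites $N(\on{Fun}(\Lambda^{op}, Top_*))_{prop} \to \mathcal{S}_*^{B\T}$ agree.

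The main obstacle, I expect, is step (6): concluding commutativity of the front face of the cube from commutativity of the other five faces is not formal in general — it requires that the comparison functors $Top_* \to Top$ and $\mathcal{S}_* \to \mathcal{S}$ be suitably monadic or conservative, and that the extra data (the basepoint) be tracked coherently rather than just on homotopy categories. The cleanest way around this is to not argue ``diagram chase on a cube'' at all, but instead to note that $\mathcal{S}_*$, $\mathcal{S}_*^{B\T}$, $N(\on{Fun}(\Lambda^{op},Top_*))$, etc. are all built as undercategories (or pointed-object categories) from their unpointed counterparts, and that every functor in the square — geometric realization, colimit over $\Lambda^{op}$, the localization maps — is induced functorially on pointed objects from the corresponding unpointed functor together with its action on the terminal/basepoint object. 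Since Proposition \ref{prop rectification cyclic spaces to cyclic spectra} gives a specified homotopy filling the unpointed square, applying the pointed-object construction (which is itself functorial) transports this homotopy to one filling the pointed square. I would also record, for use elsewhere in the paper, the parallel fact that the analogous diagram with $Top_*$ replaced by $Top_*^{C_p}$ (genuine $C_p$-spaces) commutes, which is needed alongside Lemma \ref{lem bm is a finite cp space}; the same argument applies verbatim.
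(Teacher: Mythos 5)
Your overall strategy coincides with the paper's: the paper also proves this lemma by reducing to the unpointed statement (Proposition \ref{prop rectification cyclic spaces to cyclic spectra}), using that the geometric realization of a pointed (para)cyclic space is the realization of the underlying simplicial pointed space and is canonically pointed — i.e.\ exactly the "pointed objects are induced functorially from unpointed ones" reduction you settle on at the end, rather than a literal five-faces-of-a-cube chase.

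One correction, though: your justification in step (4) rests on the claim that $\Lambda^{op}$ has weakly contractible nerve (and similarly invokes contractibility for $B\T$). This is false — the classifying space of the cyclic category is equivalent to $BS^1 \simeq \mathbb{CP}^\infty$, which is simply connected but not contractible, and $B\T$ is of course the same space. The conclusion you need is still true, but for a different reason: the realization of a cyclic object is computed as the colimit over $\Delta^{op}$ (the inclusion $\Delta^{op} \to \Lambda^{op}$ is cofinal, and likewise the relevant comma category for the left Kan extension along $\Lambda^{op} \to B\T$ receives a cofinal map from $\Delta^{op}$), and $\Delta^{op}$ is sifted, hence weakly contractible; so the forgetful functor $\mathcal{S}_* = \mathcal{S}_{*/} \to \mathcal{S}$ does preserve these colimits by \cite[4.4.2.9]{lurie2009higher}. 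Also note that no colimit-preservation over $B\T$ is needed at all: the comparison $\mathcal{S}_*^{B\T} \to \mathcal{S}^{B\T}$ is just postcomposition with the forgetful functor, so that part of your step (4) can simply be dropped. With this repair your argument is complete and matches the paper's.
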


\begin{proof}
This boils down to the analogous statement for unpointed spaces, together with the fact that the geometric realization of a simplicial object in pointed spaces is canonically pointed, and that the geometric realization of a pointed (para)cyclic space is the geometric realization of the underlying simplicial  pointed space. 
\end{proof}

As we occasionally work with point-set models in the setting of graded topological spaces, we need to show  that various point-set constructions are compatible with constructions at the level of $\infty$-categories. For this, let 
$$
Top_*^{\Z} := \on{Fun}(\Z, Top_*)
$$
denote the category of functors  $F: \Z \to Top_*$.  With the Day convolution  product (cf. \cite{day1970closed}) and the projective model structure (see eg. \cite[Theorem 4.1]{batanin2013homotopy}), this becomes a symmetric monoidal model category. In our constructions, we sometimes start with a monoid in this category and use the following compatibility result.
\begin{comment}

This will follow from the following:

\begin{prop}
There is an equivalence of $\infty$-categories
$$
\on{Alg}(N(Top_*^{\Z})[W^{-1}]) \simeq N(\on{Alg}(Top_*^{\Z})[W^{-1}]
$$
\end{prop}

\begin{proof}
Recall the symmetric monoidal Quillen equivalence of $Top_*$ and $sSet_*$. Taking compatible model structures on diagram categories, one obtains a symmetric monoidal Quillen equivalence between  the model categories 
$$
Top_*^{\Z} \simeq_{Q.E} sSet_*^\Z.
$$
We remark that (see for example \cite[Section 6.1]{hovey2000symmetric}) one can define model categories of monoids in $Top_*$. Furthermore, one can see that the discrete category $\Z$  easily satisfies the condition of being a well structured indexing  category described in \cite{sagave2012diagram}. Hence, by Proposition 8.6 there, the model category $Top_*^\Z$ and its (monoidally) Quillen equivalent category $sSet_*^{\Z}$ satisfy the monoid axiom and so one can define the relevant categories of monoids there. 
Now, we apply the rectification theorem of Lurie \cite[Theorem 4.1.8.4]{lurie2016higher} since every object in this functor category will be cofibrant, to obtain an equivalence 
$$
\on{Alg}(N(sSet_*^\Z)[W^{-1}]) \simeq N(\on{Alg}(sSet_*^\Z)).
$$
\end{proof}
\end{comment}

\begin{lem}\label{lem strictification from model cat to inf cat of top}
Let $X^{str}$ be a cofibrant monoid in $Top_*^{\Z}$ and let $b_\bullet^{str}$ denote the corresponding $\Lambda$-object in $Top_*^{\Z}$ given by the cyclic bar construction on $X^{str}$. We denote by $X$ the image of $X^{str}$ under the canonical map 
\[
N((\on{Alg}(Top_*^{\Z})^c)[W^{-1}] \to \on{Alg}(N((Top_*^{\Z})^c)[W^{-1}]).\footnote{On the left, $W$ denotes the class of algebra morphisms $f:A \to B$ which are weak equivalences when regarded as morphisms on  $Top_*^{\Z}$. On the right $W$ denotes the class of weak equivalences in   $Top_*^{\Z}$ itself. These are the object-wise weak equivalences for this functor category.}
\]
Furthermore, the image of $b_\bullet^{str}$ under the map 
$$N((Top_*^{\Z \times \Lambda^{op} })^c)[W^{-1}] \to N((Top_*)^c)[W^{-1}]^{\Z \times \Lambda^{op}} \simeq \mathcal{S}_*^{\Z \times \Lambda^{op}} 
$$
\noindent
is denoted by $b_\bullet$. There is an equivalence 
\[b_\bullet \simeq b(X)\]
where $b(X)$ is the cyclic bar construction on $X$. 
\end{lem}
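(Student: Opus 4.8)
The plan is to exhibit both $b_\bullet$ and $b(X)$ as the values of a single construction — the cyclic bar construction, regarded as a functor natural in symmetric monoidal functors — at the two ends of the Dwyer--Kan localization
\[
L \co N\big((Top_*^{\Z})^c\big)[W^{-1}] \longrightarrow \mathcal{S}_*^{\Z},
\]
and to exploit the fact that $L$ is symmetric monoidal.

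First I would record that $L$ is symmetric monoidal. There is a symmetric monoidal Quillen equivalence between $Top_*$ and $sSet_*$, so the Dwyer--Kan localization $Top_* \to \mathcal{S}_*$ is a symmetric monoidal functor of symmetric monoidal $\infty$-categories. Equipping $Top_*^{\Z} = \on{Fun}(\Z, Top_*)$ with the Day convolution product (\cite{day1970closed}) and the projective model structure (\cite[Theorem 4.1]{batanin2013homotopy}) — which is symmetric monoidal since $\Z$ is a discrete symmetric monoidal category — this promotes to a symmetric monoidal functor $L$ landing in $\mathcal{S}_*^{\Z}$ with its Day convolution structure. Since $X^{str}$ is cofibrant by hypothesis, each of its iterated Day convolution smash powers $(X^{str})^{\wdg(n+1)}$ is cofibrant and therefore computes the derived, i.e. $\infty$-categorical, tensor power of the associative algebra $X := L(X^{str})$. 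That $L$ sends the point-set monoid $X^{str}$ to the algebra $X$ appearing in the statement is an instance of Lurie's rectification theorem \cite[Theorem 4.1.8.4]{lurie2016higher} (alternatively, one may transport everything along the Quillen equivalence to $sSet_*^{\Z}$, where every object is cofibrant, apply rectification there, and transport back).

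Next I would phrase the cyclic bar construction functorially. For a symmetric monoidal $\infty$-category $\mathcal{C}^\otimes$ there is a cyclic bar construction functor $B^{\mathrm{cyc}}_\bullet \co \on{Alg}(\mathcal{C}^\otimes) \to \on{Fun}(\Lambda^{op}, \mathcal{C})$, obtained by composing the cyclic object $\Lambda^{op} \to \assact$ of \cite[Proposition B.1]{nikolausscholze2018topologicalcyclic} with the algebra structure map and the active tensor functor $\mathcal{C}^{\otimes}_{\on{act}} \to \mathcal{C}$. Because it is assembled out of only identities, the multiplication, the unit and the symmetry, $B^{\mathrm{cyc}}_\bullet$ is natural in symmetric monoidal functors: for $F \co \mathcal{C}^\otimes \to \mathcal{D}^\otimes$ symmetric monoidal, the square with horizontal arrows $B^{\mathrm{cyc}}_\bullet$ and vertical arrows $\on{Alg}(F)$ and $\on{Fun}(\Lambda^{op}, F)$ commutes up to coherent equivalence. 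The identical formalism is available point-set theoretically: $b_\bullet^{str}$ is, by definition, the value on $X^{str}$ of the point-set cyclic bar construction in $Top_*^{\Z}$, a levelwise construction out of finite smash powers together with the cyclic operators; using the cofibrancy of $X^{str}$ — and the properness of the underlying simplicial object, whose degeneracies are cofibrations between cofibrant objects — the image of $b_\bullet^{str}$ under $L$, applied levelwise to $\Lambda^{op}$-diagrams, represents $B^{\mathrm{cyc}}_\bullet(X)$. Combining these,
\[
b_\bullet = L \circ b_\bullet^{str} \simeq B^{\mathrm{cyc}}_\bullet(L(X^{str})) = B^{\mathrm{cyc}}_\bullet(X) = b(X)
\]
in $\mathcal{S}_*^{\Z \times \Lambda^{op}}$, which is the claimed equivalence.

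The hard part is the compatibility used in the last step: identifying the explicit, combinatorially defined point-set cyclic bar construction with the abstractly defined $\infty$-categorical one under $L$. This is exactly what the rectification machinery of \cite{lurie2016higher} is built to handle, but one must verify the hypotheses carefully — principally that $Top_*^{\Z}$ with the Day convolution is a sufficiently well-behaved symmetric monoidal model category satisfying the monoid axiom (which it does, $\Z$ being a discrete indexing category), that the cofibrancy of $X^{str}$ makes every smash power occurring in the bar construction homotopically meaningful, and that the full cyclic — not merely simplicial — structure is transported faithfully. This last point reduces to the unpointed, ungraded statement already recorded in Proposition \ref{prop rectification cyclic spaces to cyclic spectra}, together with its pointed enhancement in Lemma \ref{lem cyclic ptd spaces to cyclic spectra}.
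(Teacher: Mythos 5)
Your argument is correct and takes essentially the same route as the paper: both proofs reduce to the commutativity of a square whose vertical arrows are the point-set and $\infty$-categorical cyclic bar constructions, both invoke Lurie's rectification theorem \cite[Theorem 4.1.8.4]{lurie2016higher} (after passing to a combinatorial Quillen-equivalent replacement of $Top_*^{\Z}$) to identify algebra objects across the localization, and both identify the resulting cyclic objects levelwise as iterated smash powers of $X^{str}$ equipped with the Hochschild structure maps encoded by $\Lambda^{op} \to \assact$. The paper packages the argument by presenting $b_\bullet$ directly as the composite $\Lambda^{op} \to \assact \xrightarrow{(X^{str})^\otimes} N(Top_*^{\Z})^{\wedge}_{\on{act}} \to N(Top_*^{\Z})$ and invoking \cite[Corollary 4.2.4.7]{lurie2009higher} for the straightening, whereas you emphasize the symmetric monoidality of the localization $L$; these are two ways of saying the same thing.

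One correction to your closing remark: Proposition \ref{prop rectification cyclic spaces to cyclic spectra} and Lemma \ref{lem cyclic ptd spaces to cyclic spectra} are about the compatibility of the geometric \emph{realization} of a cyclic object with Dwyer--Kan localization, not about the cyclic bar construction itself. They are not a reduction for the present lemma; rather, they are deployed \emph{alongside} it (as in the proof of Lemma \ref{lem bm is a finite cp space}) once the present lemma has identified the cyclic objects, to then compare their realizations as $\T$-equivariant spaces. Your core chain of equivalences does not actually depend on that citation, so this does not affect the soundness of the argument.
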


\begin{proof}
This amounts to showing that the following diagram commutes
\begin{equation} 
 \begin{tikzcd}
 N(\on{Alg}(Top^\Z_*))[W^{-1}] \ar[r,"\on{\simeq}"] \ar[d, ""]& \on{Alg}(\mathcal{S}^\Z_*) \ar[d, ""]\\
N(Top_*^{\Z \times \Lambda^{op}})[W^{-1}] \ar[r,"\simeq"]& \mathcal{S}_*^{\Z \times \Lambda^{op}}
 \end{tikzcd}
 \end{equation}
where the vertical arrows represent the strict version of the cyclic bar construction and the $\infty$-categorical version, respectively. The cyclic object $b_\bullet$  as an object in the bottom right hand corner, obtained by applying the cyclic bar construction to $X^{str}$, is given by 
$$
\Lambda^{op} \to \assact \xrightarrow{X^{str}} N(Top_*^\Z)^{\wedge}_{\on{act}} \xrightarrow{\wedge} N(Top_*^\Z);
$$
this follows since $\on{Alg}(N(Top_*^{\Z}))[W^{-1}]) \simeq N(\on{Alg}(Top_*^{\Z}))[W^{-1}]$  (see e.g.   \cite[Theorem 4.1.8.4]{lurie2016higher};  the category $Top_*^\Z$ doesn't satisfy these properties on the nose, but may be replaced by a Quillen equivalent one which does).

In the above, we are implicitly using  \cite[Corollary 4.2.4.7]{lurie2009higher}
which implies that the diagram $N(\Lambda^{op}) \to N(Top_*^{\Z})[W^{-1}] \simeq \mathcal{S}_*^\Z$ may be straightened, in that it arises from the nerve construction applied to a functor $\Lambda \to Top_*^\Z$, and it determines this functor uniquely.

In cyclic degree $n$, this cyclic object is given by the $n+1$ pointed smash product
$$
X^{str} \wedge...\wedge X^{str}
$$
with the standard Hochschild structure maps, encoded by 
the map $\Lambda^{op} \to \assact $.
This is none other than the cyclic object
$$
\Lambda^{op} \to \assact  \xrightarrow{X} {\mathcal{S}_*}^{\wedge}_{\on{act}} \xrightarrow[]{\wedge} \mathcal{S}_*
$$
which is the bar construction $b(X)$ on $X$. 
\end{proof}

\providecommand{\bysame}{\leavevmode\hbox to3em{\hrulefill}\thinspace}
\providecommand{\MR}{\relax\ifhmode\unskip\space\fi MR }
% \MRhref is called by the amsart/book/proc definition of \MR.
\providecommand{\MRhref}[2]{%
  \href{http://www.ams.org/mathscinet-getitem?mr=#1}{#2}
}
\providecommand{\href}[2]{#2}


\begin{thebibliography}{AMMN20}

\bibitem[AGH19]{antieau2019ktheoreticobstructionstobddtstructures}
Benjamin Antieau, David Gepner, and Jeremiah Heller, \emph{{$K$}-theoretic
  obstructions to bounded {$t$}-structures}, Invent. Math. \textbf{216} (2019),
  no.~1, 241--300. \MR{3935042}

\bibitem[AK18]{angelini2018detecting}
Gabe Angelini-Knoll, \emph{Detecting the $\beta$-family in iterated algebraic
  {K}-theory of finite fields}, arXiv preprint arXiv:1810.10088 (2018).

\bibitem[AMMN20]{antieau2020beilinson}
Benjamin Antieau, Akhil Mathew, Matthew Morrow, and Thomas Nikolaus, \emph{On
  the {B}eilinson fiber square}, arXiv preprint arXiv:2003.12541 (2020).

\bibitem[Bay18]{bayindir2018topeqeinfty}
Haldun~\"{O}zg\"{u}r Bay{\i}nd{\i}r, \emph{Topological equivalences of
  {E}-infinity differential graded algebras}, Algebr. Geom. Topol. \textbf{18}
  (2018), no.~2, 1115--1146. \MR{3773750}

\bibitem[Bay19]{bayindir2019extension}
Haldun~{\"O}zg{\"u}r Bay{\i}nd{\i}r, \emph{Extension {DGA}s and topological
  {H}ochschild homology}, arXiv preprint arXiv:1911.13183 (2019).

\bibitem[Bay21]{bayindir2019dgaswithpolynomial}
Haldun~\"{O}zg\"{u}r Bay{\i}nd{\i}r, \emph{D{GA}s with polynomial homology},
  Adv. Math. \textbf{389} (2021), Paper No. 107907, 58. \MR{4289043}

\bibitem[BB13]{batanin2013homotopy}
Michael Batanin and Clemens Berger, \emph{Homotopy theory for algebras over
  polynomial monads}, arXiv preprint arXiv:1305.0086 (2013).

\bibitem[BHM93]{bokstedt1993cyclotomictraceandalgebraicktheoryofspaces}
M.~B\"{o}kstedt, W.~C. Hsiang, and I.~Madsen, \emph{The cyclotomic trace and
  algebraic {$K$}-theory of spaces}, Invent. Math. \textbf{111} (1993), no.~3,
  465--539. \MR{1202133}

\bibitem[BM11]{basterra2011thhofEn}
Maria Basterra and Michael~A. Mandell, \emph{Homology of {$E_n$} ring spectra
  and iterated {$THH$}}, Algebr. Geom. Topol. \textbf{11} (2011), no.~2,
  939--981. \MR{2782549}

\bibitem[BM13]{basterra2013BP}
\bysame, \emph{The multiplication on {BP}}, J. Topol. \textbf{6} (2013), no.~2,
  285--310. \MR{3065177}

\bibitem[BMS19]{bhatt2019thhandintegralpadichodge}
Bhargav Bhatt, Matthew Morrow, and Peter Scholze, \emph{Topological
  {H}ochschild homology and integral {$p$}-adic {H}odge theory}, Publ. Math.
  Inst. Hautes \'{E}tudes Sci. \textbf{129} (2019), 199--310. \MR{3949030}

\bibitem[BP20]{bayindirperoux}
Haldun~{\"O}zg{\"u}r Bay{\i}nd{\i}r and Maximilien P{\'e}roux,
  \emph{Spanier-{W}hitehead duality for topological co{H}ochschild homology},
  arXiv preprint arXiv:2012.03966 (2020).

\bibitem[CMT78]{may1978splitting}
Frederick~R Cohen, JP~May, and LR~Taylor, \emph{Splitting of certain spaces
  {CX}}, Mathematical Proceedings of the Cambridge Philosophical Society,
  vol.~84, Cambridge University Press, 1978, pp.~465--496.

\bibitem[Coh87]{cohen1987model}
Ralph~L Cohen, \emph{A model for the free loop space of a suspension},
  Algebraic topology, Springer, 1987, pp.~193--207.

\bibitem[Day70]{day1970closed}
Brian Day, \emph{On closed categories of functors}, Reports of the Midwest
  Category Seminar IV, Springer, 1970, pp.~1--38.

\bibitem[EKMM97]{elmendorf2007rings}
A.~D. Elmendorf, I.~Kriz, M.~A. Mandell, and J.~P. May, \emph{Rings, modules,
  and algebras in stable homotopy theory}, Mathematical Surveys and Monographs,
  vol.~47, American Mathematical Society, Providence, RI, 1997, With an
  appendix by M. Cole. \MR{1417719}

\bibitem[Hes15]{hesselholt2015thebigderhamwittcomplex}
Lars Hesselholt, \emph{The big de {R}ham-{W}itt complex}, Acta Math.
  \textbf{214} (2015), no.~1, 135--207. \MR{3316757}

\bibitem[HM97a]{hesselholt1997polytopesandtruncatedpolynomial}
Lars Hesselholt and Ib~Madsen, \emph{Cyclic polytopes and the {$K$}-theory of
  truncated polynomial algebras}, Invent. Math. \textbf{130} (1997), no.~1,
  73--97. \MR{1471886}

\bibitem[HM97b]{hesselholtwittvectors1997k}
\bysame, \emph{On the {K}-theory of finite algebras over {W}itt vectors of
  perfect fields}, Topology \textbf{36} (1997), no.~1, 29--101.

\bibitem[HM03]{hesselholt2003ktheoryoflocalfields}
\bysame, \emph{On the {$K$}-theory of local fields}, Ann. of Math. (2)
  \textbf{158} (2003), no.~1, 1--113. \MR{1998478}

\bibitem[KN19]{krause2019b}
Achim Krause and Thomas Nikolaus, \emph{B{\"o}kstedt periodicity and quotients
  of {DVR}s}, arXiv preprint arXiv:1907.03477 (2019).

\bibitem[LMT20]{land2020vanishing}
Markus Land, Lennart Meier, and Georg Tamme, \emph{Vanishing results for
  chromatic localizations of algebraic {$ K $}-theory}, arXiv preprint
  arXiv:2001.10425 (2020).

\bibitem[Lur09]{lurie2009higher}
Jacob Lurie, \emph{Higher topos theory}, Princeton University Press, 2009.

\bibitem[Lur15]{lurie2015rotation}
\bysame, \emph{Rotation invariance in algebraic {K}-theory}, preprint (2015).

\bibitem[Lur17]{lurie2016higher}
\bysame, \emph{Higher algebra.}, Preprint, available at
  https://www.math.ias.edu /\~{}lurie/papers/HA.pdf (2017).

\bibitem[Mou21]{moulinos2019geometry}
Tasos Moulinos, \emph{The geometry of filtrations}, Bulletin of the London
  Mathematical Society (2021).

\bibitem[NS18]{nikolausscholze2018topologicalcyclic}
Thomas Nikolaus and Peter Scholze, \emph{On topological cyclic homology}, Acta
  Math. \textbf{221} (2018), no.~2, 203--409. \MR{3904731}

\bibitem[Qui70]{quillen1968cohomologyofcommutativerings}
Daniel Quillen, \emph{On the (co-) homology of commutative rings}, Applications
  of {C}ategorical {A}lgebra ({P}roc. {S}ympos. {P}ure {M}ath., {V}ol. {XVII},
  {N}ew {Y}ork, 1968), Amer. Math. Soc., Providence, R.I., 1970, pp.~65--87.
  \MR{0257068}

\bibitem[Rez98]{rezk1998notes}
Charles Rezk, \emph{Notes on the {H}opkins-{M}iller theorem}, Homotopy theory
  via algebraic geometry and group representations ({E}vanston, {IL}, 1997),
  Contemp. Math., vol. 220, Amer. Math. Soc., Providence, RI, 1998,
  pp.~313--366. \MR{1642902}

\bibitem[Rig20]{riggenbach2020algebraic}
Noah Riggenbach, \emph{On the algebraic {$ K $}-theory of double points}, arXiv
  preprint arXiv:2007.01227 (2020).

\bibitem[Sch]{schwede-book}
Stefan Schwede, \emph{Symmetric spectra}, Electronic book,
  http://www.math.uni-bonn.de/people/schwede/SymSpec-v3.pdf.

\bibitem[Spe20]{speirs2020truncatedpolynomial}
Martin Speirs, \emph{On the {$K$}-theory of truncated polynomial algebras,
  revisited}, Adv. Math. \textbf{366} (2020), 107083, 18. \MR{4070307}

\bibitem[Spe21]{speirs2019ktheorycoordinate}
\bysame, \emph{On the {$K$}-theory of coordinate axes in affine space}, Algebr.
  Geom. Topol. \textbf{21} (2021), no.~1, 137--171. \MR{4224738}

\bibitem[Vee18]{veen2018detecting}
Torleif Veen, \emph{Detecting periodic elements in higher topological
  {H}ochschild homology}, Geom. Topol. \textbf{22} (2018), no.~2, 693--756.
  \MR{3748678}

\bibitem[Wei13]{weibel2013k}
Charles~A Weibel, \emph{The {K}-book: An introduction to algebraic {K}-theory},
  vol. 145, American Mathematical Society Providence, RI, 2013.

\end{thebibliography}
\end{document}